\DeclareMathOperator{\Int}{int}
\DeclareMathOperator*{\argmax}{arg\,max}
\DeclareMathOperator*{\argmin}{arg\,min}
\newcommand{\Chat}{\hat{\mathbb{C}}}
\declaretheorem[parent=section]{theorem}
\declaretheorem[sibling=theorem]{lemma}
\declaretheorem[sibling=theorem, style=definition]{definition}
\declaretheorem[sibling=theorem, style=definition]{corollary}
\declaretheorem[sibling=theorem, style=remark]{remark}
\declaretheorem[sibling=theorem, style=definition]{question}
\newcommand{\B}[1]{B_{#1}}
\definecolor{c0}{RGB}{255,0,0}
\definecolor{c1}{RGB}{204,255,0}
\definecolor{c2}{RGB}{0,255,102}
\definecolor{c3}{RGB}{0,102,255}
\definecolor{c4}{RGB}{204,0,255}
\title{Optimal zero-free regions for the independence polynomial of bounded degree hypergraphs}
\author{Ferenc Bencs\quad\quad\quad Pjotr Buys}
\begin{document}
\maketitle

\begin{abstract}
In this paper we  investigate the distribution of zeros of the independence polynomial of  hypergraphs of maximum degree $\Delta$. For graphs the largest zero-free disk around zero was described by Shearer as having radius $\lambda_s(\Delta)=(\Delta-1)^{\Delta-1}/\Delta^\Delta$. Recently it was shown by Galvin et al. that for hypergraphs the disk of radius $\lambda_s(\Delta+1)$ is zero-free; however, it was conjectured that the actual truth should be $\lambda_s(\Delta)$. We show that this is indeed the case. We also show that there exists an open region around the interval $[0,(\Delta-1)^{\Delta-1}/(\Delta-2)^\Delta)$ that is zero-free for hypergraphs of maximum degree $\Delta$, which extends the result of Peters and Regts from graphs to hypergraphs.
Finally, we determine the radius of the largest zero-free disk for the family of bounded degree $k$-uniform linear hypertrees in terms of $k$ and $\Delta$.
\end{abstract}


\section{Introduction}
Let $\mathcal{H} = (V,E)$ be a hypergraph, where $V$ denotes a finite set of vertices and $E$ is a set of nonempty subsets of $V$. An independent set is a subset of the vertices $U \subseteq V$ such that $e \not\subseteq U$ for any $e \in E$. We let $\mathcal{I}(\mathcal{H})$ denote the set of independent sets of $\mathcal{H}$. If we assign to every vertex $v \in V$ a complex variable $\bm{\lambda}_v$, we can define the multivariate independence polynomial of $\mathcal{H}$ as 
\[
    Z(\mathcal{H}; \bm{\lambda}) = \sum_{U \in \mathcal{I}(\mathcal{H})} \prod_{v \in U} \bm{\lambda}_v.
\]
We let $Z(\mathcal{H}; \lambda)$ denote the univariate specialization, i.e. where all $\bm{\lambda}_v$ are set to $\lambda$.

The location of the zeros of the independence polynomial in the complex plane within the class of bounded degree graphs has been a topic gleaning a lot of interest 
\cite{leake2019generalizations,chudnovsky2007roots,brown2018stability}. Recently this interest has been extended to the study of bounded degree hypergraphs \cite{PerkinsHypergraphs}. The following two theorems contain our main contribution.

The first result concerns the largest zero-free disk around zero. Let $\lambda_s(\Delta)=\frac{(\Delta-1)^{\Delta-1}}{\Delta^\Delta}$. 

\begin{restatable}{theorem}{theoremShearer}
    \label{thm: Shearer all hypergraphs}
    Let $\Delta \geq 2$.
    For any hypergraph $\mathcal{H}$ with maximum degree at most $\Delta$ and $\bm{\lambda} \in \mathbb{C}^{V(\mathcal{H})}$ with $|\bm{\lambda}_u| \leq \lambda_s(\Delta)$ for all $u\in V(\mathcal{H})$ we have $Z(\mathcal{H};\bm\lambda)\neq 0$.
\end{restatable}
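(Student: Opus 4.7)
The approach I would take is the classical occupancy-ratio (tree recursion) method from Shearer's proof for graphs, adapted to hypergraphs. Given a vertex $v\in V(\mathcal{H})$ with $Z(\mathcal{H}-v;\bm{\lambda})\neq 0$, I would introduce the ratio
\[
R_v(\mathcal{H}) \;=\; \bm{\lambda}_v\,\frac{Z(\mathcal{H}/v;\bm{\lambda})}{Z(\mathcal{H}-v;\bm{\lambda})},
\]
where $\mathcal{H}-v$ denotes deletion of $v$ together with every hyperedge containing $v$, and $\mathcal{H}/v$ is the hypergraph on $V(\mathcal{H})\setminus\{v\}$ with edge set $\{f\in E(\mathcal{H}): v\notin f\}\cup\{e\setminus\{v\}: v\in e\in E(\mathcal{H})\}$, encoding the constraints that survive when $v$ is forced into the independent set. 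Since $Z(\mathcal{H};\bm{\lambda})=Z(\mathcal{H}-v;\bm{\lambda})\bigl(1+R_v(\mathcal{H})\bigr)$, the theorem reduces, by induction on $|V(\mathcal{H})|$, to showing $R_v(\mathcal{H})\neq -1$ for every rooted hypergraph whose vertex activities satisfy $|\bm{\lambda}_u|\le\lambda_s(\Delta)$.

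To drive the induction I would look for an invariant region $\mathcal{U}\subset\mathbb{C}$, bounded away from $-1$, such that $R_v(\mathcal{H})\in\mathcal{U}$ whenever the occupancy ratios at the other vertices appearing in a suitable recursion for $R_v(\mathcal{H})$ all lie in $\mathcal{U}$. Expanding the extra constraints of $\mathcal{H}/v$ by inclusion--exclusion over the hyperedges $e_1,\ldots,e_d$ at $v$ (equivalently, iteratively applying the hyperedge peeling identity $Z(\mathcal{H})=Z(\mathcal{H}-e)-\bigl(\prod_{u\in e}\bm{\lambda}_u\bigr)Z(\mathcal{H}^{(e)})$, with $\mathcal{H}^{(e)}$ the hypergraph on $V(\mathcal{H})\setminus e$ in which all of $e$ is pinned in), the aim is to express $R_v(\mathcal{H})$ in multiplicative form
\[
R_v(\mathcal{H}) \;=\; \bm{\lambda}_v \prod_{i=1}^{d}\Bigl(1 - \prod_{u\in e_i\setminus\{v\}}\frac{R_u^{(i)}}{1+R_u^{(i)}}\Bigr),
\]
where each $R_u^{(i)}$ is an occupancy ratio at $u$ in an appropriate subhypergraph produced along the peeling. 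In the pure graph case ($|e_i|=2$) the inner products collapse and one recovers the classical Shearer recursion $R_v=\bm{\lambda}_v/\prod_{u\sim v}(1+R_u)$.

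The main obstacle, and the place where the bound has to be sharpened from the Galvin et al.\ value $\lambda_s(\Delta+1)$ to the optimal $\lambda_s(\Delta)$, is to identify an invariant region $\mathcal{U}$ of the right size --- necessarily a Shearer-type region whose size is pinned by the $\Delta$-regular graph of $2$-edges as the extremal case --- and to prove that the hypergraph recursion above is contractive on $\mathcal{U}$. The guiding heuristic is that a hyperedge of arity $k\ge 3$ is strictly weaker than a graph edge: its per-edge factor $1-\prod_{u\in e_i\setminus\{v\}} R_u^{(i)}/(1+R_u^{(i)})$ is closer to $1$ than the graph factor $1/(1+R_u)$, so hyperedges of larger arity should, if anything, shrink $|R_v|$ rather than enlarge it, keeping the graph case extremal. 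Turning this heuristic into a precise modulus bound, uniform across arities $k\ge 2$ and degrees $d\le\Delta$ and attaining $\lambda_s(\Delta)$ exactly, is the technical heart of the argument. Once that is in place the inductive step closes automatically: the subhypergraphs arising in the recursion have strictly fewer vertices and maximum degree still at most $\Delta$, so by induction their ratios are defined and lie in $\mathcal{U}$, whence $R_v(\mathcal{H})\in\mathcal{U}$ and in particular $R_v(\mathcal{H})\neq -1$, giving $Z(\mathcal{H};\bm{\lambda})\neq 0$; the base case $|V|=1$ is immediate from $\lambda_s(\Delta)<1$.
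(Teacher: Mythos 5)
Your outline has the right shape but stops exactly at the point where the proof actually happens, and it conflates a step that is not automatic. The multiplicative recursion
\[
R_v \;=\; \bm{\lambda}_v\prod_{i=1}^{d}\Bigl(1 - \prod_{u\in e_i\setminus\{v\}}\tfrac{R_u^{(i)}}{1+R_u^{(i)}}\Bigr)
\]
is \emph{not} valid for a general hypergraph $\mathcal{H}$: after you peel the edges $e_1,\dots,e_d$ at $v$, the residual hypergraphs are not disjoint, so the telescoping does not factor into a clean product of ratios at the other endpoints. This identity holds verbatim only for \emph{linear hypertrees} (the paper's Lemma~\ref{lem:tree_recursion}). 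To reduce the general hypergraph case to this tree recursion you need a Weitz-type self-avoiding construction for hypergraphs (the paper's Definition~\ref{def:weitz} and Theorem~\ref{thm:ratio}), which builds a linear hypertree $\mathcal{T}$ with the \emph{same} ratio at the root; that reduction is a nontrivial ingredient and is entirely absent from your proposal. You gesture at it with ``inclusion--exclusion over the hyperedges,'' but the point is precisely that iterated peeling on a cyclic hypergraph does not reproduce the tree formula without first passing to the hypertree.

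The other, equally essential, gap is that you never produce the invariant region. The paper's choice is the closed disk $\B{1/\Delta}$, and the contraction is a short explicit computation: if $|R|\le 1/\Delta$ then $|R/(1+R)|\le 1/(\Delta-1)\le 1$, so for any hyperedge $e\ni r$ one has $\bigl|\prod_{u\in e\setminus\{r\}} R_u/(1+R_u)\bigr|\le 1/(\Delta-1)$ (products of factors of modulus at most one cannot grow), hence each per-edge factor has modulus at most $1+1/(\Delta-1)$, and with at most $\Delta-1$ such edges and $|\bm{\lambda}_r|\le\lambda_s(\Delta)$ one gets
\[
|R_r|\;\le\;\lambda_s(\Delta)\Bigl(1+\tfrac{1}{\Delta-1}\Bigr)^{\Delta-1}\;=\;\tfrac{1}{\Delta},
\]
closing the induction. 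Your heuristic that larger-arity hyperedges only help is exactly the inequality $|R/(1+R)|\le 1$ making the inner product no larger than a single factor, so you correctly identified the mechanism --- but identifying the disk $\B{1/\Delta}$ and running this one-line estimate is ``the technical heart'' that you acknowledge leaving out, and it is what turns the plan into a proof.
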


For graphs this bound was proved by Shearer \cite{Shearer1985}; see also \cite{ScottSokal}. For hypergraphs our result improves the bound from \cite{PerkinsHypergraphs} from $\lambda_s(\Delta+1)$ to $\lambda_s(\Delta)$. This bound is the best possible since it is proved in \cite{Shearer1985, ScottSokal} that zeros of truncated ($\Delta-1$)-ary trees accumulate on $-\lambda_s(\Delta)$.

The second result concerns the largest possible open zero-free region around the positive real axis. Let $\lambda_c(\Delta)=\frac{(\Delta-1)^{\Delta-1}}{(\Delta-2)^\Delta}$.

\begin{restatable}{theorem}{theoremSokal}
    \label{thm: hypergraph Sokal region}
    Let $\Delta\ge 3$. There exists an open neighborhood $U$ of the interval $(0,\lambda_c(\Delta))$ such that for any hypergraph $\mathcal{H}$ of maximum degree $\Delta$ and $\lambda\in U$ we have $Z(\mathcal{H}; \lambda)\neq 0$. 
\end{restatable}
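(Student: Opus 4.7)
The plan is to extend the strategy of Peters and Regts (who proved the analogous result for graphs) to hypergraphs, by establishing invariance and contraction of an appropriate region under the hypergraph tree recursion and then lifting from trees to general hypergraphs. I would first set up the recursion: for a rooted hypertree with root $v$ and edges $e_1,\ldots,e_d$ incident to $v$, define the occupancy ratio $R_v := Z^{\mathrm{in}}_v / Z^{\mathrm{out}}_v$, where $Z^{\mathrm{in}}_v$ (resp.\ $Z^{\mathrm{out}}_v$) is the partition function of $\mathcal H$ restricted to independent sets containing (resp.\ not containing) $v$. Using that for $v\in I$ each edge $e_j$ imposes the constraint that not all of $e_j\setminus\{v\}$ lies in $I$, a short computation gives
\[
    R_v \;=\; \lambda \prod_{j=1}^{d} \left( 1 - \prod_{u \in e_j \setminus \{v\}} \frac{R_u}{1+R_u}\right),
\]
which specializes to the familiar graph recursion $R_v = \lambda/\prod_u(1+R_u)$ when every $|e_j|=2$. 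Non-vanishing of $Z$ amounts to preservation of $1+R\neq 0$ throughout this recursion.

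For real $\lambda \in (0, \lambda_c(\Delta))$ I would exhibit a forward-invariant region $K_\lambda \subset \mathbb C$ for this recursion that is bounded away from $-1$. Following Peters and Regts, the natural candidate is a lens-shaped set in the complex plane---the image of a suitable hyperbolic disk under a M\"obius transformation---containing the positive real fixed point of the graph recursion on the $(\Delta-1)$-ary tree, and on which the recursion is strictly contracting in an appropriate hyperbolic/potential metric. Once strict invariance and contraction are verified for real $\lambda$, a standard continuity and compactness argument yields an open complex neighborhood $U$ of $(0,\lambda_c(\Delta))$ on which a slightly enlarged invariant region still exists and still avoids $-1$.

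To pass from hypertrees to an arbitrary hypergraph $\mathcal H$ of maximum degree $\Delta$, I would either invoke a hypergraph analogue of Weitz's self-avoiding walk construction---reducing the computation of $R_v$ in $\mathcal H$ to the same recursion on a rooted hypertree of maximum degree $\Delta$---or argue directly by induction on $|V(\mathcal H)|$, at each step expressing $R_v$ via the recursion above in terms of occupancy ratios of smaller multivariate hypergraphs. In either case, invariance of $K_\lambda$ keeps all ratios in $K_\lambda$, hence bounded away from $-1$, yielding $Z(\mathcal H;\lambda)\neq 0$.

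The main obstacle I anticipate is verifying invariance and contraction of $K_\lambda$ uniformly over all hyperedge sizes. The hyperedge factors $1 - \prod_u R_u/(1+R_u)$ can rotate and scale $R_v$ in ways quite different from the graph factors $1/(1+R_u)$, and establishing contraction uniformly in complex phases is delicate. The guiding intuition is that larger hyperedges impose \emph{weaker} constraints, so the product $\prod_u R_u/(1+R_u)$ is small inside $K_\lambda$ and the hyperedge factor is close to $1$, contributing less contraction than a graph edge; making this rigorous in the complex setting is exactly what should permit the graph threshold $\lambda_c(\Delta)$ to carry over unchanged.
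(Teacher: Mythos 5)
Your overall architecture matches the paper's: the hypertree recursion you write down is the paper's Lemma~\ref{lem:tree_recursion}, the reduction from hypergraphs to linear hypertrees is the Weitz-type construction of Section~\ref{sec: pre} together with Lemma~\ref{lem:ratio_vs_zero}, and the continuity/compactness step to pass from real $\lambda$ to an open neighborhood is also how the paper proceeds. However, the proposal stops exactly at the step that constitutes the actual content of the theorem: you never construct a region $K_\lambda$ that is provably invariant under the \emph{hyperedge} map $ \lambda\prod_{i}\bigl(1-\prod_{j}\mu(v^i_j)\bigr)$ uniformly over all hyperedge sizes, and you flag this yourself as "the main obstacle I anticipate." Worse, the candidate region you propose --- the Peters--Regts lens/cone around the positive fixed point, i.e.\ the M\"obius image of a disk --- is precisely the region the paper shows to be inadequate for this purpose: the authors remark after Lemma~\ref{lem: Cone forward invariant} that $\mu\bigl(C(-\tfrac{1}{d+1},\epsilon)\bigr)$ is \emph{not} a multiplicative semigroup for small $\epsilon$, so the natural way of controlling the inner products $\prod_{u\in e\setminus\{v\}}\mu(R_u)$ inside such a region breaks down already for $3$-uniform edges (products of two $\mu$-values near the tip of the lens escape the angular constraint). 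Your guiding intuition that "larger hyperedges impose weaker constraints" addresses the easy regime of large $b$ but not the genuinely delicate small-$b$ case, which is where the difficulty lives.

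The paper's resolution, which is absent from your proposal, is to replace contraction estimates for the multivariate map by three algebraic conditions on a single closed set $A$ (Lemma~\ref{lem: properties of A}): $1+A$ is log-convex, $\mu(A)$ is a multiplicative semigroup, and $A$ is strictly forward invariant for the \emph{univariate} graph map $f_\lambda(z)=\lambda/(1+z)^d$. The semigroup property collapses each hyperedge product into $\mu(w_i)$ with $w_i\in A$, and log-convexity collapses the product over the $d$ incident edges into $(1+\tilde w)^d$ with $\tilde w\in A$, so that every value of the multivariate map equals $f_\lambda(\tilde w)$; this is what makes the bound uniform in hyperedge size. Realizing these conditions forces a different region from the Peters--Regts cone: a truncated cone $A(x,x_0,\epsilon)=C(x,\epsilon)\cap C(-1,\tilde\epsilon)$, whose $\mu$-image must be cut off away from $1$ (the constraint $\mu(x)^2<\mu(x_0)<\sqrt{-\mu(x)}$ of Corollary~\ref{cor: multiplicative semigroup}) to be closed under multiplication, while the truncation parameter must simultaneously satisfy $x_0\geq\lambda-\tfrac{1}{d+1}$ for forward invariance (Lemma~\ref{lem: forward invariance}); these requirements are incompatible for $d=2,3$, which the paper handles by separate explicit computations (Lemma~\ref{lem: cases 2 3}). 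None of this --- the reduction to the univariate map, the construction and truncation of the region, the compatibility check, or the low-degree cases --- is supplied or replaced by an alternative argument in your proposal, so as it stands it is a plan rather than a proof.
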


For graphs this was conjectured by Sokal \cite{Sokal2001} and subsequently proved by Peters and Regts \cite{PetersRegtsSokal}. It is also proved in \cite{PetersRegtsSokal} that zeros of truncated ($\Delta-1$)-ary trees accumulate on $\lambda_c(\Delta)$. In fact, it is proved in \cite{BBGPR}, using results from \cite{GoldbergEtAl2018}, that zeros of graphs of maximum degree at most $\Delta$ accumulate on any real parameter in $(-\infty,-\lambda_s(\Delta)] \cup [\lambda_c(\Delta),\infty)$. Therefore, if we let $\mathcal{U}_{\Delta,2} \subseteq \mathbb{C}$ denote the maximal open zero-free set for graphs with maximum degree at most $\Delta$ and we let $\mathcal{U}_{\Delta,\geq 2} \subseteq \mathbb{C}$ denote the analogous set for hypergraphs with maximum degree at most $\Delta$, we can conclude the following corollary.

\begin{corollary}
    Let $\Delta \geq 3$. We have
    \[
        \mathcal{U}_{\Delta,2} \cap \mathbb{R} = \mathcal{U}_{\Delta,\geq 2} \cap \mathbb{R} = \left(-\lambda_s(\Delta), \lambda_c(\Delta)\right).
    \]
\end{corollary}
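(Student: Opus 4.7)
The plan is to establish the two-sided inclusion
\[
    (-\lambda_s(\Delta), \lambda_c(\Delta)) \subseteq \mathcal{U}_{\Delta,\geq 2} \cap \mathbb{R} \subseteq \mathcal{U}_{\Delta,2} \cap \mathbb{R} \subseteq (-\lambda_s(\Delta), \lambda_c(\Delta)),
\]
from which all three sets coincide. The middle inclusion is essentially free: every graph is a hypergraph of the same maximum degree, so any open set that is zero-free for all hypergraphs of maximum degree at most $\Delta$ is in particular zero-free for all graphs of maximum degree at most $\Delta$, giving $\mathcal{U}_{\Delta,\geq 2} \subseteq \mathcal{U}_{\Delta,2}$ and hence the corresponding real-trace inclusion.

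For the first inclusion I would combine the two main theorems of the paper. Theorem~\ref{thm: Shearer all hypergraphs} implies that the open disk $\{\lambda\in\mathbb{C}:|\lambda|<\lambda_s(\Delta)\}$ is an open zero-free set for all hypergraphs of maximum degree at most $\Delta$, contributing the open subinterval $(-\lambda_s(\Delta), \lambda_s(\Delta))$ to $\mathcal{U}_{\Delta,\geq 2}$. Theorem~\ref{thm: hypergraph Sokal region} supplies an open neighborhood of $(0, \lambda_c(\Delta))$ on which the independence polynomial of any such hypergraph is nonvanishing. Since the two open zero-free sets overlap (both contain a neighborhood of every point in $(0, \lambda_s(\Delta))$), their union is an open zero-free set containing the real interval $(-\lambda_s(\Delta), \lambda_c(\Delta))$; by maximality of $\mathcal{U}_{\Delta,\geq 2}$ this interval then lies inside it.

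For the third inclusion I would invoke the accumulation result already quoted from \cite{BBGPR,GoldbergEtAl2018}: the set of roots of independence polynomials of graphs of maximum degree at most $\Delta$ is dense in $(-\infty, -\lambda_s(\Delta)] \cup [\lambda_c(\Delta), \infty)$. No real point at which such zeros accumulate can lie in an open zero-free set, so $\mathcal{U}_{\Delta,2} \cap \mathbb{R}$ must be disjoint from that union, which is exactly the required containment in $(-\lambda_s(\Delta), \lambda_c(\Delta))$.

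There is effectively no genuine obstacle: the corollary is a bookkeeping consequence of the two new hypergraph upper bounds (Theorems~\ref{thm: Shearer all hypergraphs} and~\ref{thm: hypergraph Sokal region}) matched against previously established graph-side accumulation statements, together with the trivial monotonicity that graphs are hypergraphs. All of the substantive work resides in proving the two restated theorems; once those are in hand, the present statement is a one-page consequence, and the only thing one must be careful about is using \emph{open} zero-freeness throughout so that the accumulation results on the boundary points $-\lambda_s(\Delta)$ and $\lambda_c(\Delta)$ suffice to exclude them.
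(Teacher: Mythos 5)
Your proposal is correct and matches the paper's (implicit) argument: the paper likewise obtains the lower inclusion from Theorems~\ref{thm: Shearer all hypergraphs} and~\ref{thm: hypergraph Sokal region}, the upper one from the accumulation of graph zeros on $(-\infty,-\lambda_s(\Delta)]\cup[\lambda_c(\Delta),\infty)$ cited from \cite{BBGPR,GoldbergEtAl2018}, and the middle inclusion $\mathcal{U}_{\Delta,\geq 2}\subseteq\mathcal{U}_{\Delta,2}$ from the fact that graphs are hypergraphs. Your care about open zero-freeness versus boundary accumulation is exactly the right bookkeeping point.
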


In \cite[Conjecture 6]{PerkinsHypergraphs} it is conjectured that the connected component of $\mathcal{U}_{\Delta,2}$ containing $0$ is equal to the connected component of $\mathcal{U}_{\Delta,\geq 2}$ containing $0$. This conjecture is false for $\Delta = 1,2$ because graphs of maximum degree at most two are real-rooted, while the independence polynomial of the hypergraph consisting of a single hyperedge with three vertices is $3\lambda^2 + 3\lambda + 1$, which has non-real roots. The conjecture is also false for $\Delta = 3$; see Corollary~\ref{cor: U3 is not zero-free}. We will however show that the conjecture is true in the $\Delta\to\infty$ limit.
\begin{theorem}
    \label{thm: limit theorem}
The rescaled zero-free regions converge to the same limit object in terms of Hausdorff distance\footnote{A sequence of sets $A_n$ converges to $A$ if for every closed $K_1 \subseteq \Int(A)$ and open $K_2 \supseteq \overline{A}$ for $n$ sufficiently large $K_1 \subseteq A_n \subseteq K_2$.}, i.e.
\[
    \lim_{\Delta \to \infty} \Delta \cdot \mathcal{U}_{\Delta,2} =  \lim_{\Delta \to \infty} \Delta \cdot \mathcal{U}_{\Delta,\geq 2}
\]
\end{theorem}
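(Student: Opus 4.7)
The plan rests on the observation that both rescaled regions $\Delta\cdot\mathcal{U}_{\Delta,2}$ and $\Delta\cdot\mathcal{U}_{\Delta,\geq 2}$ ought to share a common Hausdorff limit, dictated by the asymptotics of the two sharp thresholds $\lambda_s(\Delta)$ and $\lambda_c(\Delta)$. The trivial direction is immediate: every graph is a hypergraph all of whose edges have size $2$, so any set that is zero-free for hypergraphs of maximum degree at most $\Delta$ is also zero-free for graphs of the same maximum degree. Hence $\mathcal{U}_{\Delta,\geq 2}\subseteq\mathcal{U}_{\Delta,2}$, and Hausdorff limits inherit this inclusion.

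For the reverse direction, I would apply Theorem~\ref{thm: Shearer all hypergraphs} to see that $\mathcal{U}_{\Delta,\geq 2}$ contains the disk of radius $\lambda_s(\Delta)$, whose rescaling by $\Delta$ converges to the open disk $\{|z|<1/e\}$ because $\Delta\lambda_s(\Delta)\to 1/e$. Similarly Theorem~\ref{thm: hypergraph Sokal region} furnishes an open neighborhood of $(0,\lambda_c(\Delta))$ whose rescaling converges (in Hausdorff sense) to some open neighborhood $U_\infty$ of $(0,e)$, since $\Delta\lambda_c(\Delta)\to e$. Thus any Hausdorff limit point of $\Delta\cdot\mathcal{U}_{\Delta,\geq 2}$ contains $\{|z|<1/e\}\cup U_\infty$. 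To close the argument I would invoke the sharp characterization of the maximal graph zero-free region implicit in \cite{PetersRegtsSokal, BBGPR}: the rescaled graph region $\Delta\cdot\mathcal{U}_{\Delta,2}$ converges to exactly this same union, and hence the two rescaled sequences share a Hausdorff limit, which is what the theorem asserts.

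The main obstacle is matching the open neighborhood produced by Theorem~\ref{thm: hypergraph Sokal region} with the corresponding neighborhood for graphs. Our Theorem~\ref{thm: hypergraph Sokal region} only asserts existence of \emph{some} open neighborhood; to ensure equality of Hausdorff limits one needs a quantitative version that tracks how the neighborhood behaves in $\Delta$. Heuristically, writing $\lambda = z/\Delta$ for fixed $z$ and passing to the scaling limit, the contributions from hyperedges of size $\geq 3$ are of lower order in $\Delta$ than those of $2$-edges, so the branching recursion driving the proof of Theorem~\ref{thm: hypergraph Sokal region} has the same scaling limit as the graph recursion $x\mapsto\lambda(1+x)^{\Delta-1}$. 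Making this precise, so that the two open neighborhoods have a common Hausdorff limit, is the principal technical step; once it is established, combining with the two main theorems and the known asymptotic description of $\mathcal{U}_{\Delta,2}$ yields the desired equality.
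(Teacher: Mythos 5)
Your trivial direction ($\mathcal{U}_{\Delta,\geq 2} \subseteq \mathcal{U}_{\Delta,2}$) is correct and matches the paper. But the reverse direction has a genuine gap, and it stems from a misconception about the limit set $\mathcal{U}_\infty$. You claim that ``the rescaled graph region $\Delta\cdot\mathcal{U}_{\Delta,2}$ converges to exactly'' the union of the disk $\{|z|<1/e\}$ and a neighborhood $U_\infty$ of $(0,e)$. This is false. The set $\mathcal{U}_\infty$ of \cite{BencsBuys2021} is a considerably larger and more intricate region, characterized dynamically: $\Lambda\in\mathcal{U}_\infty$ iff there exists a compact convex set containing $0$ that is forward invariant for $E_\Lambda(Z)=\Lambda e^{-Z}$. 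It contains, for example, complex parameters well away from the disk of radius $1/e$ and away from any fixed small neighborhood of the interval $(0,e)$. Only the intersection of $\mathcal{U}_{d,2}$ with the real axis is described by $(-\lambda_s,\lambda_c)$; Theorems~\ref{thm: Shearer all hypergraphs} and~\ref{thm: hypergraph Sokal region} therefore give you a subset of $\mathcal{U}_\infty$ that is far from exhausting it, and combining them cannot yield the required inclusion $K_1\subseteq d\cdot\mathcal{U}_{d,\geq 2}$ for arbitrary compact $K_1\subseteq\Int(\mathcal{U}_\infty)$.

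Your closing heuristic---that the hypergraph branching recursion has the same scaling limit as the graph recursion---is also not the mechanism the paper uses, and it glosses over the substantive work. What the paper actually does (Lemma~\ref{lem: rescaling conditions}) is start from the dynamical characterization of $\Lambda_0\in\mathcal{U}_\infty$ via a compact convex $E_{\Lambda_0}$-invariant region $A$, and then produce a rescaled region $\tilde A/d$ satisfying \emph{all three} conditions of Lemma~\ref{lem: properties of A} (log-convexity of $1+\tilde A/d$, multiplicative-semigroup property of $\mu(\tilde A/d)$, and strict forward invariance under $z\mapsto(\Lambda/d)/(1+z)^d$), for all $d$ large and all $\Lambda$ near $\Lambda_0$. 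These conditions are what bridge the multivariate hypergraph recursion $F_\lambda$ to the univariate one; the uniform convergence of $(1+Z/d)^{-d}$ to $e^{-Z}$ takes care of only the third condition. Having that, one applies Corollary~\ref{cor: properties imply zerofree} pointwise and a compactness argument to cover $K_1$. Without this pathway through the explicit limit characterization and the three-condition lemma, there is no way to conclude that the rescaled hypergraph zero-free regions catch up with $\mathcal{U}_\infty$.
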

A priori it is not clear that either limit is well-defined. In \cite{BencsBuys2021} the two authors of the present paper together with Han Peters investigated the left-hand limit and showed that it does indeed converge to a non-trivial limit set. 


\subsection{Uniform linear hypertrees}
A hypergraph is called linear if for any pair of distinct edges $e_1,e_2$ we have $|e_1 \cap e_2| \leq 1$. A path from vertex $v_1$ to vertex $v_2$ in a linear hypergraph is a tuple $(v_1 = u_1, e_1, u_2, e_2, \dots, e_{n-1},u_n= v_2)$, where $u_1, \dots, u_n$ is a sequence of distinct vertices and $e_1, \dots, e_{n-1}$ is a sequence of distinct edges such that $\{u_i,u_{i+1}\} \subseteq e_i$ for all $i=1, \dots, n-1$. A linear hypergraph is called a linear hypertree if there is a unique path between any pair of vertices. A hypergraph is called $k$-uniform if every hyperedge has size $k$.

In \cite{PerkinsHypergraphs} the largest possible zero-free disk for bounded degree uniform linear hypertrees is investigated. They give an explicit radius $r_{d,b}$ such that for any $b+1$ uniform linear hypertree $\mathcal{T}$ with maximum degree at most $d+1$ and fugacities $\bm{\lambda}$ with $|\bm{\lambda}_v| \leq r_{d,b}$ it is the case that $Z(\mathcal{T};\bm{\lambda}) \neq 0$; see \cite[Theorem 4]{PerkinsHypergraphs}. As $d \to \infty$ one has that $r_{d,b} = e^{-1}d^{-1/b} + \mathcal{O}(d^{-2/b})$. They also show that a disk of radius $\mathcal{O}((\log(d)/d)^{1/b})$ is not zero-free; see \cite[Proposition 5]{PerkinsHypergraphs}. In this paper we determine the exact maximal zero-free disk for uniform bounded degree linear hypertrees.

\begin{restatable}{theorem}{theoremUniformHypertrees}
    \label{thm: Zero free linear (k)-uniform hypertrees}
    Let $d \geq 2$, $b \geq 1$ and define the function $f_\lambda(z)=f_{\lambda,b,d}(z)$ as
    \[
        f_{\lambda}(z)=\lambda\left(1-\left(\frac{z}{1+z}\right)^b\right)^d.
    \]
    Let $\rho_{d,b}$ be the largest $\rho\ge 0$, such that there exists a disk $\B{R}$ of radius $R$ around $0$ that is mapped into itself by $f_\rho$. To be precise,
    \[
        \rho_{d,b}=\max\{\rho\ge 0~|~\text{there exists an } R>0\text{ such that }f_\rho(\B{R})\subseteq \B{R}\}.
    \]
    
    \begin{itemize}
        \item[{\bf Zero-freeness}]
            If $\mathcal{T}$ is a $(b+1)$-uniform linear hypertree  with maximum degree at most $d+1$ and ${\bm \lambda} \in \mathbb{C}^{V(\mathcal{T})}$ is such that $|{\bm\lambda}_v| \leq \rho_{d,b}$ for all $v\in V(\mathcal{T})$, then  $Z(\mathcal{T}; \bm \lambda) \neq 0$.
    \item[{\bf Maximality}]
        There is a sequence of $(b+1)$-uniform linear hypertrees $\{\mathcal{T}_n\}$ with maximum degree at most $d+1$ and a sequence of parameters $\{\lambda_n\}$ such that $|\lambda_n|$ converges to $\rho_{d,b}$ and $Z(\mathcal{T}_n;\lambda_n) = 0$ for all $n$.

    \item[{\bf Formula}]
        \[
            \rho_{d, b} = \min\left\{\left|(1-w^{b})^{-d}\frac{w}{(1-w)}\right|: w\in \mathbb{C} \text{ with } -bd\frac{w^{b}(1-w)}{(1-w^b)} = 1 \right\}.
        \]
    \item[{\bf Asymptotics}]
        For fixed $b$ 
    \[
        \rho_{d,b} = (ebd)^{-1/b} + \mathcal{O}(d^{-2/b})
    \]
    as $d \to \infty$.
    \end{itemize}
\end{restatable}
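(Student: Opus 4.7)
The plan is to analyze the occupation ratios $R_v := Z_v^{\text{in}}/Z_v^{\text{out}}$ on a rooted $(b+1)$-uniform linear hypertree, reducing zero-freeness to dynamics of $f_\lambda$. Linearity of the hypertree means that children of $v$ in distinct edges occupy disjoint subtrees, and a standard in/out calculation yields the recursion
\[
    R_v = \lambda_v \prod_{j=1}^{d_v}\left(1 - \prod_{i=1}^{b} g(R_{u_{j,i}})\right), \qquad g(z) = \frac{z}{1+z},
\]
where the children of $v$ in its $j$-th edge are $u_{j,1}, \ldots, u_{j,b}$ and $d_v \le d$. In the symmetric specialization (equal child ratios and $d_v = d$), this is exactly $f_\lambda$. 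Since $Z_v = Z_v^{\text{out}}(1 + R_v)$ and $Z_v^{\text{out}}$ is inductively nonzero, $Z(\mathcal{T};\bm{\lambda}) \ne 0$ follows from $R_v \ne -1$ at every vertex; it therefore suffices to keep every $R_v$ in an invariant disk $\B{R} \subset \{|z|<1\}$.

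For zero-freeness, let $\B{R}$ be a disk granted by the definition of $\rho_{d,b}$, so $\rho_{d,b} M(R)^d \le R$ with $M(R) := \max_{|z|\le R} |1 - g(z)^b|$. I argue by induction on depth that $R_v \in \B{R}$ whenever all $|\lambda_v| \le \rho_{d,b}$. The key step handles the asymmetry among children: the image $U := g(\B{R})$ is a (closed) disk (Möbius image of a disk), and the map $(w_1, \ldots, w_b) \mapsto \prod_i w_i$ is symmetric and multilinear, so Walsh's coincidence theorem produces $w^* \in U$ with $\prod_i g(R_{u_{j,i}}) = (w^*)^b$. Writing $w^* = g(z^*)$ for $z^* \in \B{R}$ gives
\[
\left|1 - \prod_{i=1}^b g(R_{u_{j,i}})\right| = |1 - g(z^*)^b| \le M(R).
\]
Since $0 \in \B{R}$ forces $M(R) \ge 1$, taking the product over $d_v \le d$ edge-factors yields $|R_v| \le |\lambda_v| M(R)^d \le \rho_{d,b} M(R)^d \le R$, closing the induction.

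For maximality and the formula, I specialize to symmetric $(d+1)$-regular truncated hypertrees under uniform fugacity $\lambda$, whose root ratio is a finite iterate of $f_\lambda$ starting from $0$. For $|\lambda|$ slightly exceeding $\rho_{d,b}$ no invariant disk exists for $f_\lambda$, and Montel/normal-family arguments in the spirit of Peters--Regts show that as $\lambda$ varies on a small arc at this scale the analytic family $\{\lambda \mapsto f_\lambda^n(0)\}_n$ cannot avoid $-1$ for all $n$, producing hypertrees with $Z = 0$ at parameters approaching $\rho_{d,b}$. For the formula, the extremal pair $(\rho_{d,b}, R^*)$ is characterized by internal tangency of $f_{\rho_{d,b}}(\partial \B{R^*})$ to $\partial \B{R^*}$; joint extremization in $R$ and the angular variable at the tangency point $z_0$ yields $z_0 f_{\rho_{d,b}}'(z_0)/f_{\rho_{d,b}}(z_0) = 1$. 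Using $zg'(z)/g(z) = 1-g(z)$ and $f_\rho = \rho \phi^d$ with $\phi = 1 - g^b$, this simplifies to $-bd\, g(z_0)^b(1-g(z_0))/(1-g(z_0)^b) = 1$. Setting $w := g(z_0)$, so $z_0 = w/(1-w)$, converts this to the stated derivative condition and gives $\rho_{d,b} = |z_0|/|\phi(z_0)|^d = |(1-w^b)^{-d}\, w/(1-w)|$; the minimum over critical branches $w$ selects the saddle realizing $\max_R R/M(R)^d$.

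For the asymptotics with $b$ fixed and $d \to \infty$, the relevant root of $-bd\, w^b(1-w)/(1-w^b) = 1$ has $w \to 0$, so $(1-w)/(1-w^b) \to 1$ and $w^b = -1/(bd) + O(d^{-2})$. Then $|w| = (bd)^{-1/b}(1+O(d^{-1}))$, $|1-w^b|^{-d} = (1+1/(bd))^{-d}(1+O(d^{-1})) \to e^{-1/b}$, and $1/(1-w) \to 1$, so $\rho_{d,b} = (bd)^{-1/b} e^{-1/b} + O(d^{-2/b}) = (ebd)^{-1/b} + O(d^{-2/b})$. The main obstacle I expect is the maximality construction: converting the failure of disk-invariance for $f_\rho$ into explicit zero-producing sequences $(\mathcal{T}_n, \lambda_n)$ demands a careful complex-dynamics argument, since the step from ``some orbit is unbounded'' to ``some finite iterate of $0$ equals $-1$'' is not automatic.
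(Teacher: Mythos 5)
Your zero-freeness argument (Grace--Walsh--Szeg\H o to collapse the asymmetric product of child ratios into a single point of $\mu(\B{R})$, then induction with the invariant disk, with the remark $M(R)\geq 1$ handling vertices of down-degree less than $d$) is essentially the paper's proof of its Lemma~\ref{lem: zero-free disk k-uniform hypertrees}, and your asymptotic expansion is the paper's computation (your intermediate error $w^b=-1/(bd)+\mathcal{O}(d^{-2})$ should be $\mathcal{O}(d^{-1-1/b})$, but the final $\mathcal{O}(d^{-2/b})$ bound survives). The genuine gap is exactly where you flag it: Maximality, and with it the rigorous half of Formula. The step from ``no invariant disk exists for $|\lambda|>\rho_{d,b}$'' to non-normality of $\{\lambda\mapsto f_\lambda^n(0)\}$ and a parameter where a finite iterate hits $-1$ is not automatic, and ``Montel in the spirit of Peters--Regts'' does not supply it: Montel requires exhibiting three omitted (moving) values, and failure of disk-invariance by itself gives no such statement. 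The paper's route is to first produce, at the extremal modulus, a parameter $\lambda$ with $|\lambda|=\rho_{d,b}$ at which $f_\lambda$ has a fixed point of multiplier exactly $1$; this requires (a) controlling the set $\argmax_{z\in\B{R_M}}|f_\rho(z)|$ -- the nontrivial geometric Lemma~\ref{lem: Two maxima}, proved via Tait--Kneser monotonicity of osculating circles, showing the maximizers are a single real point or a conjugate pair so that $|f_\rho'|$ is constant on them -- and (b) a two-sided perturbation argument (Lemma~\ref{lem: max disk -> indifferent}: enlarging the disk if $|f_\rho'|<1$ at all maximizers, Schwarz--Pick to shrink it if $|f_\rho'|>1$) to force $|f_\rho'|=1$ there, after which a rotation $\lambda=\rho z/f_\rho(z)$ and positivity of the multiplier at a boundary fixed point of an invariant disk give multiplier exactly $1$.

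Second, converting that neutral parameter into zeros accumulating at modulus $\rho_{d,b}$ is itself a nontrivial chain you do not supply: one needs that the indifferent point is not persistently indifferent (via hyperbolicity of $f_\lambda$ at some parameter, using that every critical orbit of $f_\lambda$ passes through $\lambda$ within three steps), McMullen's theorem on holomorphic motions to conclude the critical-orbit family $\{\lambda\mapsto f_\lambda^n(\lambda)\}$ is not normal near $\lambda$, and then Montel applied with the three \emph{moving} targets $-1$, $\infty$ and $p(\lambda)$ (a holomorphically varying preimage of $-1$) to find a nearby $\lambda_1$ and $n$ with $f_{\lambda_1}^n(\lambda_1)=-1$, which the explicit truncated hypertrees and Lemma~\ref{lem:ratio_vs_zero} turn into $Z(\mathcal{T}_n;\lambda_1)=0$. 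Your tangency/``joint extremization'' heuristic for the formula also hides the same difficulties: the envelope-type first-order condition at the extremal $(\rho,R^*)$ only yields $z_0f_\rho'(z_0)/f_\rho(z_0)=1$ once one knows the angular maximizers are unique up to conjugation (otherwise Danskin-type one-sided derivatives only sandwich $1$ between values at different maximizers), and your claim that the \emph{minimum} over solution branches $w$ is the one realized is justified in the paper precisely by the accumulation-of-zeros result: if $\rho$ exceeded that minimum, the branch attaining it would produce zeros strictly inside the zero-free disk, a contradiction. Without these ingredients the Maximality and Formula items remain unproved.
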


Contrary to the proof of Theorem~\ref{thm: Shearer all hypergraphs}, where we use the explicit formula for $\lambda_s(\Delta)$, we will not prove Theorem~\ref{thm: Zero free linear (k)-uniform hypertrees} using the formula. Instead, we will show that if a natural sufficient zero-freeness condition, namely forward invariance of the map $f_\lambda$, is not met this implies that there is a $\lambda'$ of the same magnitude as $\lambda$ for which $f_{\lambda'}$ has neutral fixed point. This will allow us to prove that zeros accumulate on $\lambda'$ and to give an explicit formula for $\lambda'$ and its magnitude.

\begin{question}
    A few days before appearance of the present paper it was shown by Zhang \cite{zhang2023note} that for each $k \geq 3$ there exists a sequence of $k$-uniform linear hypergraphs with zeros of magnitude  $\mathcal{O}(\log(\Delta)/\Delta)$. It follows that in general the disk of radius $\rho_{d,b}$ does not remain zero-free if one moves from uniform linear hypertrees to uniform linear hypergraphs. Describing the true asymptotics of the maximal zero-free disk for uniform bounded degree linear hypergraphs remains an open problem. The Shearer disk \cite{Shearer1985} shows that it is at least of size $\lambda_s(\Delta) = (e\Delta)^{-1} + \mathcal{O}(\Delta^{-2})$.
\end{question}

\subsection{Algorithmic consequence}

Let $\mathcal{F}_{\Delta,k}$ be the family of hypergraph of maximum degree at most $\Delta$ and of hyperedge-size at most $k$. In the paper \cite{PerkinsHypergraphs} an FPTAS\footnote{An FPTAS for a graph polynomial $Z_G$ is an algorithm that on an input $G$ and $\epsilon>0$ outputs an $\hat Z$ in polynomial time $|V(G)|$ and $1/\epsilon$ such that it is an $\epsilon$-relative approximation of $Z_G$, i.e. $|Z_G-\hat Z|\le \epsilon Z_G$.} was established for computing $Z(\mathcal{H};\lambda)$ for the family $\mathcal{F}_{\Delta,k}$ and $|\lambda|<\lambda_s(\Delta+1)$.
The proof of \cite[Theorem~10]{PerkinsHypergraphs} is based on 3 ingredients: Barvinok's interpolation method~\cite{barvinok2016combinatorics}, the work of Liu, Sinclair and Srivastava~\cite{liu2019ising} and Patel and Regts \cite{PatelRegts2017}, and the zero-free region given in \cite[Theorem~1]{PerkinsHypergraphs}. In generality they proved the following.
\begin{theorem}[\cite{PerkinsHypergraphs}]
Let $\Delta\ge 2$ and $k\ge 2$. Assume that there is a connected open set $0\in U\subseteq \mathbb C$, such that for any $\mathcal{H}\in\mathcal{F}_{\Delta,k}$ and $\lambda\in U$ we have $Z(\mathcal{H};\lambda)\neq 0$. 
Then for any $\lambda \in U$ there is an algorithm of running time $(n/\epsilon)^{O_{k,\Delta}(1)}$ such that it computes an $\epsilon$-relative approximation to $Z(\mathcal{H};\lambda)$ for any hypergraph $\mathcal{H}\in\mathcal{F}_{\Delta,k}$ on $n$ vertices.
\end{theorem}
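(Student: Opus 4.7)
The plan is to follow the three-ingredient Barvinok interpolation scheme that the statement itself isolates. Fix $\lambda\in U$ and $\mathcal{H}\in\mathcal{F}_{\Delta,k}$ on $n$ vertices, and set $m:=C\log(n/\epsilon)$ with $C=C(U,\lambda,k,\Delta)$. The goal is to produce an approximation of $Z(\mathcal{H};\lambda)$ from a suitably truncated Taylor expansion of $\log Z$.

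Since $U$ is open, connected, contains $0$, and is zero-free for every $\mathcal{H}\in\mathcal{F}_{\Delta,k}$, the principal branch of $\log Z(\mathcal{H};\cdot)$ (vanishing at the origin) extends analytically to $U$. Pick a simply connected open set $V$ with $\{0,\lambda\}\subset V$ and $\overline{V}\subset U$, together with a polynomial $\phi$ satisfying $\phi(0)=0$, $\phi(1)=\lambda$ and $\phi(\overline{\B{1}})\subset V$; both $V$ and $\phi$ depend only on $U$ and $\lambda$, not on $\mathcal{H}$. The crude bound $|Z(\mathcal{H};\mu)|\le (1+|\mu|)^n$ together with zero-freeness implies that $g(z):=\log Z(\mathcal{H};\phi(z))$ is analytic in a neighborhood of $\overline{\B{1}}$ with $|g(z)|\le C_V\, n$ uniformly in $\mathcal{H}$. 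Cauchy's estimate then yields
\[
    \Bigl|g(1)-\sum_{j=0}^{m}\tfrac{g^{(j)}(0)}{j!}\Bigr|\le C_V\, n\,\rho^{-m}
\]
for some $\rho>1$ depending only on $V$, so the choice $m=O(\log(n/\epsilon))$ gives an $\epsilon$-approximation of $\log Z(\mathcal{H};\lambda)$ and hence a $(1+2\epsilon)$-relative approximation of $Z(\mathcal{H};\lambda)$ after exponentiating.

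It remains to compute the truncated sum efficiently. Each coefficient $g^{(j)}(0)/j!$ is a polynomial (with known coefficients) in the Taylor coefficients of $\phi$ and in $b_1,\dots,b_m$, the Taylor coefficients of $\log Z(\mathcal{H};\cdot)$ at $0$. By the classical cluster (Ursell) expansion, $b_j$ can be written as a sum of local weights indexed by connected sub-hypergraphs of $\mathcal{H}$ with at most $j$ vertices. Following Patel-Regts and its hypergraph adaptation by Liu-Sinclair-Srivastava, the number of such connected sub-hypergraphs of size $j$ containing a fixed vertex is at most $(k\Delta)^{O(j)}$, and each contributes a weight computable in time $O_{k,\Delta}(j)$; summing over vertices produces $b_1,\dots,b_m$ in time $n\cdot (k\Delta)^{O(m)}=(n/\epsilon)^{O_{k,\Delta}(1)}$.

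The only genuinely delicate point is that the constants $C_V$ and $\rho>1$ appearing in the Barvinok step must be uniform over $\mathcal{H}\in\mathcal{F}_{\Delta,k}$; this is immediate from the exponential bound on $|Z(\mathcal{H};\mu)|$ together with compactness of $\overline{V}\subset U$, so no hypergraph-specific analysis is required once a zero-free region $U$ is supplied. The main obstacle is therefore not analytic but combinatorial, namely establishing the $(k\Delta)^{O(j)}$ count of connected sub-hypergraphs through a Borgs--Chayes--Kahn--Lov\'asz-type argument adapted to bounded edge size, which is precisely the contribution of \cite{liu2019ising} and \cite{PatelRegts2017}.
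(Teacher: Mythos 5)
The paper does not itself prove this theorem --- it is quoted from \cite{PerkinsHypergraphs}, with the authors only noting that the proof there combines Barvinok's interpolation method \cite{barvinok2016combinatorics} with the subgraph-counting machinery of Patel--Regts \cite{PatelRegts2017} and Liu--Sinclair--Srivastava \cite{liu2019ising}. Your sketch correctly reconstructs exactly that argument: reparameterize by a polynomial $\phi$ mapping $\overline{\mathbb{D}}$ into a compact $\overline{V}\subset U$ with $\phi(0)=0$, $\phi(1)=\lambda$; truncate the Taylor series of $g(z)=\log Z(\mathcal{H};\phi(z))$ at order $m=O(\log(n/\epsilon))$; and compute the first $m$ Taylor coefficients of $\log Z(\mathcal{H};\cdot)$ at $0$ by enumerating connected sub-hypergraphs of size $\le m$, of which there are $n\cdot(k\Delta)^{O(m)}$. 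One analytic step is stated too tersely: the bound $|g(z)|\le C_V\,n$ does \emph{not} follow from the upper bound $|Z(\mathcal{H};\mu)|\le(1+|\mu|)^n$ alone --- you need control of $\log|Z|$ from below as well. This comes either from writing $Z(\mathcal{H};\mu)=\prod_{i}(1-\mu/\zeta_i)$ with at most $n$ roots, all lying outside $U$ and hence at definite distance from $\overline{V}$, or (more in Barvinok's style) by applying the Borel--Carath\'eodory theorem to $g$ using $\Re g\le Cn$ and $g(0)=0$. With that repair, your outline is the standard and correct route.
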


There are several works on the complexity of counting independent sets in hypergraphs, i.e. calculating $Z(\mathcal{H};1)$; see \cite{bordewich2008path,bezakova2019approximation,liu2014fptas,hermon2019rapid,PerkinsHypergraphs}. By combining the previous theorem with Theorem~\ref{thm: hypergraph Sokal region} and Theorem~\ref{thm: Shearer all hypergraphs} we obtain the following corollary.

\begin{corollary}
    Let $\Delta\ge 2$ and $k\ge 2$. Let $\lambda\in\mathbb C$, such that either $|\lambda|<\lambda_s(\Delta)$ or $0<\lambda<\lambda_c(\Delta)$. Then there is an algorithm of running time $(n/\epsilon)^{O_{k,\Delta}(1)}$ such that it computes an $\epsilon$-relative approximation to $Z(\mathcal{H};\lambda)$ for any hypergraph $\mathcal{H}\in\mathcal{F}_{\Delta, k}$ on $n$ vertices.
\end{corollary}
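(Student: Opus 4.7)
The plan is to apply the preceding theorem of \cite{PerkinsHypergraphs} as a black box: once we exhibit a connected open set $U\subseteq\mathbb{C}$ containing $0$ on which every $\mathcal{H}\in\mathcal{F}_{\Delta,k}$ is zero-free, the FPTAS of running time $(n/\epsilon)^{O_{k,\Delta}(1)}$ is automatic for every $\lambda\in U$. It therefore suffices to produce such a $U$ containing the given $\lambda$ in each of the two parameter regimes.

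In the disk regime $|\lambda|<\lambda_s(\Delta)$, I would take $U$ to be the open disk $D$ of radius $\lambda_s(\Delta)$ around $0$. This set is open, connected, contains $\lambda$, and by Theorem~\ref{thm: Shearer all hypergraphs} is zero-free for every hypergraph of maximum degree at most $\Delta$, and in particular for every $\mathcal{H}\in\mathcal{F}_{\Delta,k}$.

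In the real-axis regime $0<\lambda<\lambda_c(\Delta)$ (which forces $\Delta\ge 3$ so that Theorem~\ref{thm: hypergraph Sokal region} applies) I would enlarge the Shearer disk $D$ by the open neighborhood $V$ of $(0,\lambda_c(\Delta))$ furnished by Theorem~\ref{thm: hypergraph Sokal region}, and take $U=D\cup V$. This set is open, contains $0\in D$ and $\lambda\in V$, and is zero-free on $\mathcal{F}_{\Delta,k}$ by the two theorems combined. Connectedness follows because both $D$ and $V$ contain the nonempty initial segment $(0,\min(\lambda_s(\Delta),\lambda_c(\Delta)))$ of the positive real axis, so their union is path-connected through this overlap. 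Applying the preceding theorem to this $U$ and the given $\lambda$ then finishes the argument. The entire proof is essentially a bookkeeping reduction; the only step that needs a moment's thought is this connectedness check, which is immediate from the fact that $V$ is by construction a neighborhood of the whole open interval $(0,\lambda_c(\Delta))$ and hence meets every open disk around $0$.
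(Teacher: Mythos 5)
Your proposal is correct and is exactly the argument the paper intends: the corollary is stated as an immediate combination of the quoted interpolation theorem of \cite{PerkinsHypergraphs} with Theorem~\ref{thm: Shearer all hypergraphs} and Theorem~\ref{thm: hypergraph Sokal region}, and your construction of $U$ (the open Shearer disk, respectively its union with the neighborhood of $(0,\lambda_c(\Delta))$, glued along the common segment of the positive real axis to ensure connectedness and $0\in U$) is the natural way to make that combination explicit.
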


This result recovers the result of Liu and Lu~\cite{liu2014fptas}, where they give an FPTAS for the number of independent sets (i.e. $\lambda=1$) for the class of hypergraphs of maximum degree at most $5$. More generally, this corollary extends the well-known result of the existence of an FPTAS for $\lambda \in (0, \lambda_c(\Delta))$ by Weitz \cite{weitz2006counting} from bounded degree graphs to bounded degree hypergraphs. It is known that for $\lambda < -\lambda_s(\Delta)$ and $\lambda > \lambda_c(\Delta)$ approximating the independent set polynomial for bounded degree graphs is NP-hard; see \cite{GoldbergEtAl2018} and \cite{Slysun} for the two regimes. We thus show that the computational dichotomy for real $\lambda$ that was known for bounded degree graphs extends to bounded degree hypergraphs.


A different consequence of our zero-freeness result concerns the mixing time of a Markov chain known as Glauber dynamics. For positive real $\lambda$ Glauber dynamics are used to approximately sample from the Gibbs distribution, i.e. the distribution on $\mathcal{I}(\mathcal{H})$ where the probability of sampling $U$ is proportional to $\lambda^{|U|}$. Anari et al. \cite{AnariSpectral} proposed a sufficient condition for the rapid mixing of the Glauber dynamics known as spectral independence, which was investigated for special Gibbs distributions \cite{chen2021optimal,blanca2022mixing}. Furthermore, it was shown in \cite[Theorem~11]{chen2022spectral} that zero-freeness in a neighborhood of $\lambda$ implies spectral independence for a large class of models on bounded degree (hyper)graphs, including the hard-core model on hypergraphs. These results together with Theorem~\ref{thm: hypergraph Sokal region} prove rapid mixing of the Glauber dynamics for bounded degree hypergraphs for $\lambda \in (0,\lambda_c(\Delta))$. 

\subsection{Overview of the paper}
The proofs of the main theorems rely on the dynamical behaviour of ratios associated to the independence polynomials. For a hypergraph $\mathcal{H}$ and a vertex $v\in V(\mathcal{H})$ we can define the ratio as
\[
    R_v(\mathcal{H},\lambda)=\frac{Z^\textrm{in}_{v}(\mathcal{H},\lambda)}{Z^\textrm{out}_{v}(\mathcal{H},\lambda)},
\]
where $Z_v^\textrm{in}(\mathcal{H},\lambda)$ is the sum of the terms of the partition function of $Z(\mathcal{H};\lambda)$ with $v$ contained in the independent set and $Z^\textrm{out}_v(\mathcal{H};\lambda)=Z(\mathcal{H};\lambda)-Z_v^{\textrm{in}}(\mathcal{H};\lambda)$.

In Section~\ref{sec: pre} we build on the reduction of Weitz \cite{weitz2006counting} to reduce the analysis of the ratios of hypergraphs to the analysis of ratios of linear hypertrees. It turns out that the ratios of linear hypertrees can be calculated recursively from smaller subhypertrees using appropriate multivariate functions.

In Section~\ref{sec: zerofreeness} we use this reduction to show that establishing forward invariant sets in the complex plane for these (multivariate) functions is a sufficient condition for zero-freeness. We gather results of this form for both the univariate and the multivariate case. This section also contains the proof of Theorem~\ref{thm: Shearer all hypergraphs}.

In Section~\ref{sec: Sokal} we show that we can satisfy the sufficient conditions for zero-freeness in the univariate case laid out in Lemma~\ref{lem: properties of A} for $\lambda \in (0, \lambda_c(\Delta))$. This leads to a proof of Theorem~\ref{thm: hypergraph Sokal region}.

In Section~\ref{sec: limit region} we use a characterization of the limit set $\mathcal{U}  = \lim_{\Delta \to \infty} \Delta \cdot \mathcal{U}_{\Delta,2}$ given in \cite{BencsBuys2021}. This allows us to show that for $\Lambda \in \mathcal{U}$ and $\Delta$ large enough there exists a region $A_\Delta$ satisfying the conditions of Lemma~\ref{lem: properties of A} for the parameter $\Lambda/\Delta$. This leads to a proof of Theorem~\ref{thm: limit theorem}.

In Section~\ref{sec: disk for linear hypertrees} we use Lemma~\ref{lem: zero-free disk k-uniform hypertrees} to establish a zero-free disk for $k$-uniform linear hypertrees. In fact, we show that the smallest disk to which we cannot apply Lemma~\ref{lem: zero-free disk k-uniform hypertrees} implies that the map $f_\lambda$ given in equation~{(\ref{eq: flambda hypertrees})} has a neutral fixed point. This will allow us both to prove that zeros accumulate on $\lambda$, following the method in \cite{buys_cayleytrees}, and to establish a formula for $\lambda$, proving Theorem~\ref{thm: Zero free linear (k)-uniform hypertrees}. In this section we rely on methods from the field of complex dynamics; in particular, we use the theory of normal families and holomorphic motion.

\section{Preliminaries}\label{sec: pre}
\subsection{Independence polynomial of hypergraphs}
For a hypergrpah $\mathcal{H}$ and its vertex $v\in V(\mathcal{H})$ let us define the ratio of $R_v(\mathcal{H},\bm{\lambda})$ as the multivariate rational function
\[
    R_v(\mathcal{H};\bm{\lambda})=\frac{Z_v^{\textrm{in}}(\mathcal{H};\bm{\lambda})}{Z_v^{\textrm{out}}(\mathcal{H};{\bm\lambda})},
\]
where $Z_v^{\textrm{in}}(\mathcal{H};{\bm\lambda})$ (resp. $Z_v^{\textrm{out}}(\mathcal{H};{\bm\lambda})$) is the sum of those terms of the partition function of $Z(\mathcal{H};\bm{\lambda})$ where $v$ is in (resp. out) the independent set of $U$. To be precise,
\[
    Z_v^{\textrm{in}}(\mathcal{H};{\bm\lambda})=\sum_{v\in U\in \mathcal{I}(\mathcal{H})}\prod_{u\in I} \lambda_u
\]
and $Z_v^{\textrm{out}}(\mathcal{H};\bm{\lambda})=Z(\mathcal{H};\bm{\lambda})-Z_v^\textrm{in}(\mathcal{H};\bm{\lambda})$. 
In general, for $S\subseteq V$ we can define
\[
    Z_S^{\textrm{in}}(\mathcal{H};\bm\lambda)=\sum_{S\subseteq U\in\mathcal{I}(\mathcal{H})}\prod_{u\in U}\lambda_u.
\]

Before establishing basic recursions we need the following notations and conventions for the rest of this section.

\begin{definition} Let $\mathcal{H}$ be a hypergraph on the vertex set $V$. We can then consider the following ways to obtain subhypergraphs of $\mathcal{H}$.
\begin{itemize}
\item If $S\subseteq V$, then $\mathcal{H}-S$ is the induced subhypergraph on $V\setminus S$. That is, a hypergraph with vertex set $V\setminus S$ and with hyperedges
\[
    \{e\in E ~|~ e\cap S=\emptyset \}.
\]
\item If $S\subseteq V$, then $\mathcal{H}\ominus S$ is the hypergraph obtained by removing all vertices of $S$ from each hyperedge. If the hypergaph obtains empty hyperedges, then we delete those. Formally, $\mathcal{H}\ominus S$ is a hypergraph on $V\setminus S$ with hyperedges
\[
    \{e\setminus S ~|~ e\in E \textrm{ s.t. } e\cap S\neq e \}.
\]
\item If $F\subseteq E$, then $\mathcal{H}-F$ is the hypergraph obtained by removing the hyperedges $F$ from the hypergraph. That is, a hypergraph with vertex set $V$ and with hyperedges
\[
    \{e\in E~|~ e\notin F\}.
\]
\end{itemize}
A hypergraph $\mathcal{H}'$ is a subhypergraph of $\mathcal{H}$ if $\mathcal{H}'$ can be obtained from $\mathcal{H}$ by applying any of the previous operations and/or their combinations. We will use the convention in this case that $Z(\mathcal{H}';\bm{\lambda})$ denotes $Z\left(\mathcal{H}';\bm{\lambda}\big|_{V(\mathcal{H}')}\right)$.
\end{definition}


Now let us collect basic relations following from the definition of independent sets. We refer to \cite{trinks2016survey} for their proofs.
\begin{lemma}\label{lem:basic_recursion}
Let $\mathcal{H}=(V,E)$ be a hypergraph  and $v\in V$ such that $\{v\}\notin E$. Then
\begin{enumerate}
\item $Z_v^\textrm{in}(\mathcal{H},\bm\lambda)=\lambda_v Z(\mathcal{H}\ominus v,\bm\lambda)$.
\item $Z_v^\textrm{out}(\mathcal{H},\bm\lambda)=Z(\mathcal{H}- E(v)-v,\bm\lambda)$, where $E(v)=\{e\in E ~|~ v\in e\}$.
\item $Z(\mathcal{H};\bm\lambda)=Z(\mathcal{H}-\{e\};\bm\lambda)-Z_e^\textrm{in}(\mathcal{H}-\left\{e\right\};\bm\lambda)$.
\end{enumerate}
\end{lemma}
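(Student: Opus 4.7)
The plan is to prove each identity by a weight-preserving bijection between the relevant families of independent sets; no real calculation is needed. I do not expect a genuine obstacle, since the three statements are standard consequences of the definitions of the operations $-S$, $\ominus S$, $-F$. The only mild subtlety is the role of the hypothesis $\{v\} \notin E$ in part (1), which must be combined with the convention that an empty hyperedge produced by $\mathcal{H} \ominus v$ is discarded; without both, the converse direction of the bijection below would fail.

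For part (1), I would exhibit the bijection $U \mapsto U \setminus \{v\}$ from $\{U \in \mathcal{I}(\mathcal{H}) : v \in U\}$ to $\mathcal{I}(\mathcal{H} \ominus v)$. Given $v \in U \in \mathcal{I}(\mathcal{H})$, a nonempty edge $e \setminus \{v\}$ of $\mathcal{H} \ominus v$ with $|e| \ge 2$ cannot lie in $U \setminus \{v\}$, since that would force $e \subseteq U$. Conversely, given $U' \in \mathcal{I}(\mathcal{H} \ominus v)$, no edge of $\mathcal{H}$ lies inside $U' \cup \{v\}$: edges avoiding $v$ are already edges of $\mathcal{H} \ominus v$ and hence not contained in $U'$; an edge $e \ni v$ with $|e| \ge 2$ produces a nonempty edge $e \setminus \{v\}$ of $\mathcal{H} \ominus v$ and therefore has a vertex outside $U'$; and the case $e = \{v\}$ is ruled out by the hypothesis. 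Multiplying the resulting sum by $\lambda_v$, corresponding to the re-introduced vertex, yields the claim.

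For part (2), the constraint $e \not\subseteq U$ is automatic for every $e \in E(v)$ whenever $v \notin U$, so the collections $\{U \in \mathcal{I}(\mathcal{H}) : v \notin U\}$ and $\mathcal{I}(\mathcal{H} - E(v) - v)$ coincide as sets, and the identity follows by reading off weights with respect to $\bm\lambda$ restricted to $V \setminus \{v\}$. For part (3), I would decompose $\mathcal{I}(\mathcal{H} - \{e\})$ as the disjoint union of $\mathcal{I}(\mathcal{H})$ and $\{U \in \mathcal{I}(\mathcal{H} - \{e\}) : e \subseteq U\}$: removing the edge $e$ relaxes exactly the constraint $e \not\subseteq U$, and a set violates only this constraint iff it contains $e$. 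The weighted sum over the second piece is by definition $Z_e^{\textrm{in}}(\mathcal{H} - \{e\}; \bm\lambda)$, and rearranging gives the recursion.
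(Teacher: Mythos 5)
Your proof is correct: the three weight-preserving bijections (and the observation that $\{v\}\notin E$ together with the discarding of emptied hyperedges is exactly what makes the converse direction of the first bijection work) are precisely the standard argument for these identities. The paper itself does not prove Lemma~\ref{lem:basic_recursion} but defers to the cited survey of Trinks, so there is no in-paper proof to compare against; your write-up simply supplies the expected details.
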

A short observation of the previous lemma is that the independence polynomial of a hypergraphs is actually a product of independence polynomials of hypergraphs where no hyperedge is contained in an other, moreover each hyperedge has size at least 2.
\begin{lemma}\label{lemma: at least 2}
Let $\mathcal{H}=(V,E)$ be a hypergraph and let $V_1=\{v\in V~|~\{v\}\in E\}$ and $E'=\{e\in E~|~\exists f\in E \textrm{ s.t. } f\subset e\}\cup\{e\in E~|~|e|=1\}$. Then the hypergraph $\mathcal{H}-E'-V_1$ is disjoint union of hypergraphs of hyperedge-size at least 2 and there is no hyperedge that contains an other one, moreover
\[
    Z(\mathcal{H};\bm\lambda)=Z(\mathcal{H}-E'-V_1;\bm\lambda).
\]
\end{lemma}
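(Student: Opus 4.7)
The statement is essentially bookkeeping: I want to show that the edges in $E'$ and the vertices in $V_1$ impose no genuine constraint on $\mathcal{I}(\mathcal{H})$ once the remaining structure is in place. My plan is to show $\mathcal{I}(\mathcal{H}) = \mathcal{I}(\mathcal{H}-E'-V_1)$ as collections of subsets of $V$, from which the polynomial identity is immediate (since each $U \in \mathcal{I}(\mathcal{H})$ will satisfy $U \subseteq V\setminus V_1$ and the monomial $\prod_{u\in U} \lambda_u$ only depends on $\bm\lambda|_{V\setminus V_1}$).

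First I would handle the vertices $V_1$: if $v \in V_1$ then the singleton $\{v\}$ is a hyperedge, so the independence condition forces $v \notin U$ for every $U \in \mathcal{I}(\mathcal{H})$. In particular, every independent set of $\mathcal{H}$ is contained in $V \setminus V_1$, and so $\mathcal{I}(\mathcal{H}) \subseteq \mathcal{I}(\mathcal{H}-V_1)$, with the reverse inclusion being clear once we check that no size-$1$ edge constraint is violated by subsets of $V\setminus V_1$.

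Next, I would show that discarding $E'$ after removing $V_1$ does not enlarge $\mathcal{I}$. The only nontrivial case is an $e \in E'$ with $|e| \geq 2$ that properly contains some $f \in E$. I would argue by induction on $|e|$ (or, equivalently, by taking a $\subseteq$-minimal $f \in E$ with $f \subseteq e$): such a minimal $f$ either has $|f|=1$, in which case $f=\{v\}$ with $v\in V_1$ and the constraint $U \cap V_1 = \emptyset$ already gives $f \not\subseteq U$ and hence $e \not\subseteq U$; or $|f|\geq 2$, in which case minimality forces $f \notin E'$, so the constraint $f \not\subseteq U$ is preserved in $\mathcal{H}-E'-V_1$ and again yields $e \not\subseteq U$. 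Either way, the $E'$-constraints are redundant, so $\mathcal{I}(\mathcal{H}-V_1) = \mathcal{I}(\mathcal{H}-E'-V_1)$. Combining with the first step gives $\mathcal{I}(\mathcal{H}) = \mathcal{I}(\mathcal{H}-E'-V_1)$, hence the polynomial identity.

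The structural claims are then immediate: by construction, every hyperedge of $\mathcal{H}-E'-V_1$ has size at least $2$ (singletons were thrown into $E'$), and no remaining hyperedge contains another (if $f \subsetneq e$ with both in $E\setminus E'$, then $e \in E'$ by definition, a contradiction). The ``disjoint union'' formulation is just the decomposition into connected components of the resulting hypergraph. I do not anticipate a serious obstacle; the only point requiring care is the redundancy argument for $E'$, which must be phrased via a minimal $f \subseteq e$ rather than a one-step comparison, since an edge in $E'$ might only contain other edges that are themselves in $E'$.
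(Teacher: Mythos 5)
Your argument is correct. Note that the paper does not actually spell out a proof of this lemma: it is stated as a ``short observation'' following Lemma~\ref{lem:basic_recursion}, the implicit route being that deleting a redundant edge $e\supsetneq f$ does not change $Z$ (since $Z_e^{\textrm{in}}(\mathcal{H}-\{e\};\bm\lambda)=0$) and that a vertex with $\{v\}\in E$ never lies in an independent set. Your proof makes the same fact precise at the level of independent sets, showing $\mathcal{I}(\mathcal{H})=\mathcal{I}(\mathcal{H}-E'-V_1)$ directly, which is arguably the cleaner formulation and immediately gives the polynomial identity under the paper's restriction convention for $\bm\lambda$. Two micro-points you should spell out, though neither is a genuine gap: (i) in your first step the constraints dropped when passing to $\mathcal{H}-V_1$ are those of \emph{all} edges meeting $V_1$, not only the singletons; they are nonetheless vacuous for $U\subseteq V\setminus V_1$ since each such edge contains a vertex of $V_1$; (ii) in your second step, in the case $|f|\geq 2$, for the constraint $f\not\subseteq U$ to be ``preserved'' you need $f$ to actually be an edge of $\mathcal{H}-E'-V_1$, i.e.\ also $f\cap V_1=\emptyset$; this follows from the same minimality you invoke, since a vertex $v\in f\cap V_1$ would give the edge $\{v\}\subsetneq f\subseteq e$, contradicting minimality of $f$ (alternatively, $U\cap V_1=\emptyset$ already gives $f\not\subseteq U$ in that case). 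With these remarks added, your proof is complete, and the structural claims (edge size at least $2$, no nested edges, decomposition into connected components) are handled exactly as one would expect.
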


The next lemma is the generalized form of the tree recursion of the independence polynomial of trees for hypertrees.

\begin{lemma}\label{lem:tree_recursion}
Let $\mathcal{T}=(V,E)$ be a linear hypertree and $v$ a vertex. Suppose that $E(v)=\{e_1,\dots,e_d\}\subseteq E$ is the set of the incident edges at $v$ and $e_i=\{v,v^{(i)}_1,\dots,v^{(i)}_{b_i}\}$ for $i=1,\dots,d$. Let $\mathcal{T}_{i,j}$ be the connected component of $v^{(i)}_j$ in $\mathcal{T}-v$. Then we have
\[
R_{v}(\mathcal{T};\bm\lambda)=\lambda_v\prod_{i=1}^d\left(1-\prod_{j=1}^{b_i}\frac{R_{v^{(i)}_j}(\mathcal{T}_{i,j};\bm\lambda)}{1+R_{v^{(i)}_j}(\mathcal{T}_{i,j};\bm\lambda)}\right).
\]
\end{lemma}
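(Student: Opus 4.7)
The plan is to apply the three basic identities of Lemma~\ref{lem:basic_recursion} to suitable subhypergraphs of $\mathcal{T}$ and use the unique-path property of the hypertree to decompose those subhypergraphs as disjoint unions. Starting from parts (1) and (2) of Lemma~\ref{lem:basic_recursion}, I would write
\[
    R_v(\mathcal{T};\bm\lambda) \;=\; \lambda_v \cdot \frac{Z(\mathcal{T}\ominus v;\bm\lambda)}{Z(\mathcal{T} - E(v) - v;\bm\lambda)} \;=\; \lambda_v \cdot \frac{Z(\mathcal{T}\ominus v;\bm\lambda)}{Z(\mathcal{T} - v;\bm\lambda)},
\]
where the last equality holds because after deleting the hyperedges $E(v)$ the vertex $v$ is isolated, so removing it further coincides with taking the induced subhypergraph on $V\setminus\{v\}$. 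Thus the task reduces to computing the numerator and denominator partition functions.

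For the denominator I would show that $\mathcal{T} - v = \bigsqcup_{i,j} \mathcal{T}_{i,j}$. Indeed, if two distinct neighbors $v^{(i)}_j$ and $v^{(i')}_{j'}$ of $v$ lay in the same component of $\mathcal{T}-v$, a path between them avoiding $v$ would combine with the obvious short path through $e_i$ (and $e_{i'}$ if $i\neq i'$, or directly via $e_i$ if $i=i'$) to produce two distinct paths in $\mathcal{T}$, contradicting the hypertree property. Hence
\[
    Z(\mathcal{T}-v;\bm\lambda) \;=\; \prod_{i=1}^d\prod_{j=1}^{b_i} Z(\mathcal{T}_{i,j};\bm\lambda).
\]
For the numerator, $\mathcal{T}\ominus v$ decomposes as the disjoint union of $d$ subhypergraphs $\mathcal{T}_1,\dots,\mathcal{T}_d$, where each $\mathcal{T}_i$ consists of the components $\mathcal{T}_{i,1},\dots,\mathcal{T}_{i,b_i}$ linked together by the single new hyperedge $e_i' := e_i \setminus \{v\} = \{v^{(i)}_1,\dots,v^{(i)}_{b_i}\}$; these $d$ pieces are mutually disjoint by the same unique-path argument.

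To handle each $\mathcal{T}_i$, I would apply part (3) of Lemma~\ref{lem:basic_recursion} at the edge $e_i'$ to obtain
\[
    Z(\mathcal{T}_i;\bm\lambda) \;=\; Z(\mathcal{T}_i-\{e_i'\};\bm\lambda) - Z^{\textrm{in}}_{e_i'}(\mathcal{T}_i-\{e_i'\};\bm\lambda) \;=\; \prod_{j=1}^{b_i} Z(\mathcal{T}_{i,j};\bm\lambda) \;-\; \prod_{j=1}^{b_i} Z^{\textrm{in}}_{v^{(i)}_j}(\mathcal{T}_{i,j};\bm\lambda),
\]
using that $\mathcal{T}_i - \{e_i'\} = \bigsqcup_j \mathcal{T}_{i,j}$ and that $e_i'$ has exactly one vertex in each component, so an independent set containing $e_i'$ factors as a tuple of independent sets in the $\mathcal{T}_{i,j}$, each containing the respective $v^{(i)}_j$. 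Substituting back and simplifying using the identity $Z^{\textrm{in}}_u(\mathcal{H};\bm\lambda)/Z(\mathcal{H};\bm\lambda) = R_u/(1+R_u)$ pulls the claimed factor $1 - \prod_j R_{v^{(i)}_j}/(1+R_{v^{(i)}_j})$ out of the $i$-th term of the product, which is exactly the asserted recursion.

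The main obstacle will be the combinatorial bookkeeping for the two decompositions of $\mathcal{T}-v$ and $\mathcal{T}\ominus v$: one must carefully invoke the unique-path property (together with linearity, which is what makes the notion of a path well defined) to conclude that the components are indexed respectively by the neighbors of $v$ and by the hyperedges in $E(v)$. Once these decompositions are secured, the remainder is a direct application of Lemma~\ref{lem:basic_recursion} and elementary algebraic manipulation.
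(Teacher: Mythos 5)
Your proposal is correct and follows essentially the same route as the paper's proof: reduce $R_v$ to the quotient $\lambda_v\,Z(\mathcal{T}\ominus v)/Z(\mathcal{T}-E(v)-v)$ via Lemma~\ref{lem:basic_recursion}(1)--(2), decompose both hypergraphs into pieces indexed by the hyperedges at $v$ (resp.\ the neighbors of $v$) using the unique-path property, apply Lemma~\ref{lem:basic_recursion}(3) at each edge $e_i'$, and finish with the identity $Z^{\textrm{in}}_u/Z = R_u/(1+R_u)$. The only point the paper adds is the degenerate case $\{v\}\in E(\mathcal{T})$ (where $R_v=0$ and Lemma~\ref{lem:basic_recursion} does not apply), which it dispatches in one line before running the same argument.
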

\begin{proof}
Let us recall that $Z(\mathcal{T};{\bm\lambda})=Z_v^\textrm{in}(\mathcal{T};{\bm\lambda})+Z_v^\textrm{out}(\mathcal{T};{\bm\lambda})$. 
If $\{v\}\in E(\mathcal{T})$, then $R_v(\mathcal{T};{\bm\lambda})=0$ by definition.

Thus for the rest let us assume that $\{v\}\notin E$.
Then by Lemma~\ref{lem:basic_recursion} we have
\[
    R_v(\mathcal{T};\bm\lambda)=\frac{Z_v^\textrm{in}(\mathcal{T};{\bm\lambda})}{Z_v^\textrm{out}(\mathcal{T};{\bm\lambda})}=\frac{\lambda_v Z(\mathcal{T}\ominus\{v\};{\bm\lambda}) }{Z(\mathcal{T}-E(v)-v;{\bm\lambda}) }
\]
Observe that the hypergraphs $\mathcal{T}_1=\mathcal{T}\ominus\{v\}$ and $\mathcal{T}_0=\mathcal{T}-E(v)-v$ are defined on the same vertex set and their edge set differs only in $E'=\{e\setminus\{v\} ~|~ e\in E(v)\}$. Since $\mathcal{T}$ is a linear hypertree each connected component of $\mathcal{T}_1$ and $\mathcal{T}_0$ is a linear hypertree. The hyperforest $\mathcal{T}_0$ is isomorphic to the disjoint union of the hypertrees $\mathcal{T}_{i,j}$ for $i=1,\dots,d$ and $j=1,\dots,b_i$. Thus $\mathcal{T}_1$ has exactly $d$ connected components, i.e. for every $i=1,\dots,d$ we can obtain the connected component as the disjoint union of $\mathcal{T}_{i,j}$ for $j=1,\dots,b_i$ and adding the edge $e_i'=e_i\setminus\{v\}$. Let us call these connected components $\mathcal{T}_i$.

Thus we have that
\[
R_v(\mathcal{T},\bm\lambda)=\lambda_v\prod_{i=1}^d\frac{Z(\mathcal{T}_i,\bm\lambda)}{Z(\mathcal{T}_i-\{e_i'\};\bm\lambda)}= \lambda_v\prod_{i=1}^d\frac{Z(\mathcal{T}_i-\{e_i'\},\bm\lambda)-Z_{e_i'}^\textrm{in}(\mathcal{T}_i-\{e_i\},\bm\lambda)}{Z(\mathcal{T}_i-\{e_i'\};\bm\lambda)}
\]
To finish the proof let us observe that for each $i=1,\dots,d$ we have 
\begin{align*}
\frac{Z(\mathcal{T}_i-\{e_i'\},\bm\lambda)-Z_{e_i'}^\textrm{in}(\mathcal{T}_i-\{e_i\},\bm\lambda)}{Z(\mathcal{T}_i-\{e_i'\};\bm\lambda)}&=1-\frac{Z_{e_i'}^\textrm{in}(\mathcal{T}_i-\{e_i\},\bm\lambda)}{Z(\mathcal{T}_i-\{e_i'\};\bm\lambda)}\\&=1-\frac{\prod_{j=1}^{b_i} Z^\textrm{in}_{v_i^{(j)}}(\mathcal{T}_{i,j};\bm\lambda)}{\prod_{j=1}^{b_i} Z(\mathcal{T}_{i,j};\bm\lambda)}\\&=1-\prod_{j=1}^{b_i}\frac{R_{v^{(i)}_j}(\mathcal{T}_{i,j};\bm\lambda)}{1+R_{v^{(i)}_j}(\mathcal{T}_{i,j};\bm\lambda)}.
\end{align*}
\end{proof}

\subsection{Reduction to hypertrees}

In the next definition we propose a hypergraph version of Weitz self-avoiding path tree \cite{weitz2006counting} that allows us to reduce the problem of understanding zeros of hypergraphs to linear hypertrees. This idea was implicitly used in \cite{liu2014fptas}.

\begin{definition} [Weitz-hypertree]\label{def:weitz} 
    Let $\mathcal{H}$ be a hypergraph and let $v\in V(\mathcal{H})$ be fixed. Then let us define recursively the following rooted hypertree $(\mathcal{T},r)$ and labeling $\pi:V(\mathcal{T})\to V(\mathcal{H})$.  If $\mathcal{H}$ is a linear hypertree, then $\mathcal{T}=\mathcal{H}$, $r=v$ and $\pi= \textrm{id}_{V(\mathcal{H})}$. Otherwise, let $\{e_1,\dots,e_d\}\in E(\mathcal{H})$ be the incident edges of $H$ at $v$. For each $i=1,\dots,d$ let $e_i=\{v,v_1^{(i)},\dots,v_{b_i}^{(i)}\}\subseteq V(\mathcal{H})$ and for each $j=1,\dots,b_i$ let $(\mathcal{T}_j^{(i)},r_j^{(i)})$ be the rooted hypertree obtained for the connected component of $v_{j}^{(i)}\in V(\mathcal{H}_j^{(i)})$ in the hypergraph $\mathcal{H}_{j}^{(i)}=\mathcal{H}-\{e_1,\dots,e_i\}\ominus\{v,v_1^{(i)},\dots,v_{j-1}^{(i)}\}$. Let $\pi_j^{(i)}$ be the labelings of the corresponding rooted hypertrees.

    Then $\mathcal{T}$ is the hypergraph obtained as follows: take disjoint union of the hypertrees $\mathcal{T}_j^{(i)}$, then add a new vertex $r$ and add the hyperedges $\{r,r_{1}^{(i)},\dots,r_{b_i}^{(i)}\}$ for every $i=1,\dots,d$. Let $\pi:V(\mathcal{T})\to V(\mathcal{H})$ be defined as
    \[
        \pi(u)=\left\{\begin{array}{cl}
                            v & \textrm{if $u=r$}\\
                            \pi_j^{(i)}(u) & \textrm{if $u\in V(\mathcal{T}_j^{(i)})$}
                        \end{array}\right.
    \]

    The construction comes with a natural labeling of $V(\mathcal{T})$ with the vertices of $\mathcal{H}$, which we will denote by $\pi$. 
\end{definition}

\begin{figure}[h]
    \centering
    \includegraphics{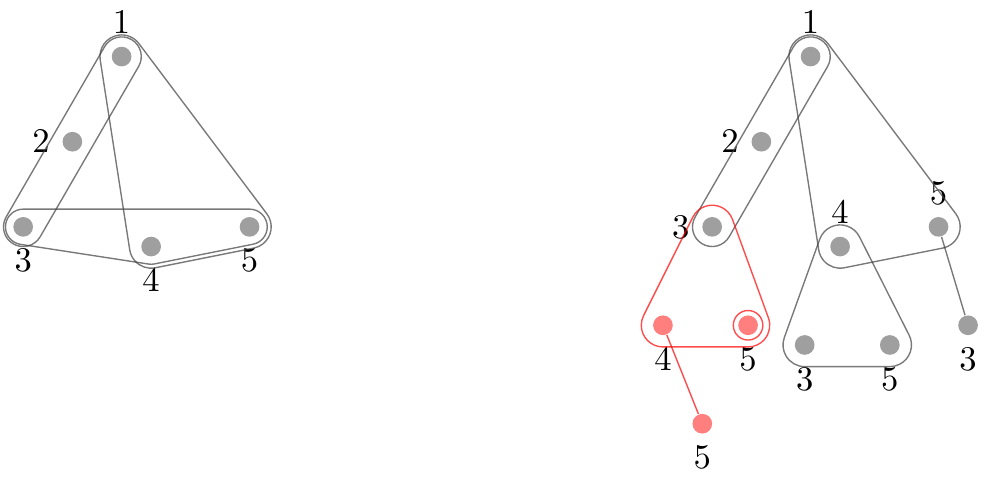}
    \caption{An example for Definition~\ref{def:weitz}, where we labeled the vertices of the linear hypertree (on the right) using the natural labeling $\pi$. The hypertree obtained by deleting the red vertices and red edges is the tree $\mathcal{T}'$ that is given by Remark~\ref{rem: weitz ratio} (and Remark~\ref{rem: weitz divisibility}). }
    \label{fig:weitz}
\end{figure}

\begin{theorem}\label{thm:ratio}
Let $\mathcal{H}$ be a hypergraph on $n$ vertices with degree at most $\Delta$ and edge-size at most $b+1$. Then for any $v\in V(\mathcal{H})$ the linear hypertree $\mathcal{T}$ and vertex $r$ of $\mathcal{T}$ defined by Definition~\ref{def:weitz} we have that
\[
    \frac{Z^\textrm{in}_{v}(\mathcal{H};\bm\lambda)}{Z^\textrm{out}_{v}(\mathcal{H};\bm\lambda)}=\frac{Z^\textrm{in}_{r}(\mathcal{T},\bm\lambda')}{Z_{r}^\textrm{out}(\mathcal{T},\bm\lambda')}
\]
as rational functions, where for any $u\in V(T)$ the variable $\lambda'_u=\lambda_{\pi(u)}$.

\end{theorem}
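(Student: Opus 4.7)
I would prove this by strong induction on $|E(\mathcal{H})|$. The base case is that $\mathcal{H}$ is already a linear hypertree, in which case $\mathcal{T}=\mathcal{H}$ and $\pi$ is the identity, so the claim is trivial. (The degenerate case $\{v\}\in E(\mathcal{H})$ is also handled directly: the Weitz construction forces $\{r\}\in E(\mathcal{T})$, so both $R_v(\mathcal{H})$ and $R_r(\mathcal{T})$ equal zero by the convention in the proof of Lemma~\ref{lem:tree_recursion}.) For the inductive step, I would apply Lemma~\ref{lem:tree_recursion} at the root $r$ of $\mathcal{T}$: by construction, each child subtree $\mathcal{T}_j^{(i)}$ is the Weitz hypertree of the subhypergraph $\mathcal{H}_j^{(i)}$ rooted at $v_j^{(i)}$, and $\mathcal{H}_j^{(i)}$ has strictly fewer edges than $\mathcal{H}$ (since $e_1,\ldots,e_i$ were deleted and $i\ge 1$). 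The induction hypothesis therefore identifies $R_{r_j^{(i)}}(\mathcal{T}_j^{(i)};\bm\lambda')$ with $R_{v_j^{(i)}}(\mathcal{H}_j^{(i)};\bm\lambda)$, which reduces the theorem to the hypergraph analogue of the tree recursion,
\[
    R_v(\mathcal{H};\bm\lambda) = \lambda_v\prod_{i=1}^d\left(1-\prod_{j=1}^{b_i}\frac{R_{v_j^{(i)}}(\mathcal{H}_j^{(i)};\bm\lambda)}{1+R_{v_j^{(i)}}(\mathcal{H}_j^{(i)};\bm\lambda)}\right), \qquad (\star)
\]
which I denote $(\star)$.

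To prove $(\star)$, set $\mathcal{H}^{[i]}:=\mathcal{H}-\{e_1,\ldots,e_i\}$ for $i=0,\ldots,d$. Since $v$ is isolated in $\mathcal{H}^{[d]}$, Lemma~\ref{lem:basic_recursion}(1),(2) gives $R_v(\mathcal{H}^{[d]})=\lambda_v$. Moreover, Lemma~\ref{lem:basic_recursion}(2) gives $Z_v^{\textrm{out}}(\mathcal{H}^{[i]})=Z(\mathcal{H}-E(v)-v)$ for every $i$, so these denominators cancel in the ratio $R_v(\mathcal{H}^{[i-1]})/R_v(\mathcal{H}^{[i]})$ and leave
\[
    \frac{R_v(\mathcal{H}^{[i-1]})}{R_v(\mathcal{H}^{[i]})} = \frac{Z(\mathcal{H}^{[i-1]}\ominus v)}{Z(\mathcal{H}^{[i]}\ominus v)}.
\]
Because $\mathcal{H}^{[i-1]}\ominus v$ differs from $\mathcal{H}^{[i]}\ominus v$ only by the addition of the single hyperedge $e_i\setminus v$, Lemma~\ref{lem:basic_recursion}(3) rewrites this as $1-\frac{Z^{\textrm{in}}_{e_i\setminus v}(\mathcal{H}^{[i]}\ominus v)}{Z(\mathcal{H}^{[i]}\ominus v)}$. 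Telescoping $R_v(\mathcal{H})/R_v(\mathcal{H}^{[d]})$ over $i=1,\ldots,d$ then produces the outer product structure of $(\star)$.

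To recover the inner product over $j=1,\ldots,b_i$, write $S_{j-1}:=\{v_1^{(i)},\ldots,v_{j-1}^{(i)}\}$ and apply the chain rule
\[
    \frac{Z^{\textrm{in}}_{e_i\setminus v}(\mathcal{H}^{[i]}\ominus v)}{Z(\mathcal{H}^{[i]}\ominus v)} = \prod_{j=1}^{b_i}\frac{Z^{\textrm{in}}_{v_j^{(i)}}\bigl((\mathcal{H}^{[i]}\ominus v)\ominus S_{j-1}\bigr)}{Z\bigl((\mathcal{H}^{[i]}\ominus v)\ominus S_{j-1}\bigr)}.
\]
The decisive observation is that the successive $\ominus$-operations compose to
\[
    (\mathcal{H}^{[i]}\ominus v)\ominus S_{j-1} = (\mathcal{H}-\{e_1,\ldots,e_i\})\ominus\{v,v_1^{(i)},\ldots,v_{j-1}^{(i)}\} = \mathcal{H}_j^{(i)},
\]
which is precisely the subhypergraph in Definition~\ref{def:weitz}. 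Rewriting each factor $Z^{\textrm{in}}_u/Z$ as $R_u/(1+R_u)$ then completes the proof of $(\star)$.

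The main obstacle is arriving at the correct telescoping. A more naive telescoping, obtained by removing the contracted edges $\{e_i\setminus v\}$ inside $\mathcal{H}\ominus v$ one at a time, produces auxiliary hypergraphs that still contain the already-processed edges $e_1\setminus v,\ldots,e_{i-1}\setminus v$, and these do \emph{not} match the $\mathcal{H}_j^{(i)}$ of Definition~\ref{def:weitz} (which retains the not-yet-processed edges $e_{i+1},\ldots,e_d$ instead). Only the telescoping through the full deletions $\mathcal{H}-\{e_1,\ldots,e_i\}$, followed by $\ominus v$ and $\ominus S_{j-1}$ in the correct order, yields auxiliary hypergraphs matching $\mathcal{H}_j^{(i)}$ on the nose; this alignment is precisely what Definition~\ref{def:weitz} is designed to capture, and it is what makes the induction close.
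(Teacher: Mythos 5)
Your proposal is, in structure, the paper's own proof: the paper likewise reduces the statement to the recursion $(\star)$ by applying Lemma~\ref{lem:tree_recursion} at the root of $\mathcal{T}$, obtains the outer product by telescoping $Z(\mathcal{H}-E_{i-1}\ominus v)/Z(\mathcal{H}-E_{i}\ominus v)$ over the edges $e_1,\dots,e_d$ at $v$, obtains the inner product by telescoping over the sets $S_{j-1}$, identifies the resulting hypergraphs with the $\mathcal{H}_j^{(i)}$ of Definition~\ref{def:weitz}, rewrites $Z^{\textrm{in}}_{v_j^{(i)}}/Z$ as $R/(1+R)$ and closes by induction (the paper inducts on the number of vertices rather than of edges, but either measure strictly decreases for the $\mathcal{H}_j^{(i)}$, so this difference is immaterial).

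There is one point at which your write-up is genuinely incomplete and the paper is not: the ``chain rule'' display. Its term-by-term justification rests on $Z^{\textrm{in}}_{v_j^{(i)}}\bigl(\mathcal{G}\ominus S_{j-1}\bigr)=\lambda_{v_j^{(i)}}\,Z\bigl(\mathcal{G}\ominus S_{j}\bigr)$ with $\mathcal{G}=\mathcal{H}^{[i]}\ominus v$, and by Lemma~\ref{lem:basic_recursion} this is only valid when $\{v_j^{(i)}\}$ is not a hyperedge of $\mathcal{G}\ominus S_{j-1}$. Since the theorem is stated for arbitrary hypergraphs, it can happen that some hyperedge $f\in E(\mathcal{H})\setminus\{e_1,\dots,e_i\}$ satisfies $f\subseteq e_i$; then, for the smallest $j$ with $f\setminus\{v\}\subseteq\{v_1^{(i)},\dots,v_j^{(i)}\}$, the hypergraph $\mathcal{G}\ominus S_{j-1}$ contains the singleton hyperedge $\{v_j^{(i)}\}$, the factorization above fails, and your telescoping argument breaks down. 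The identity you assert is nevertheless true in this situation, but for a different reason: both sides vanish identically, the left because $Z^{\textrm{in}}_{e_i\setminus v}\equiv 0$, the right because the factor indexed by that $j$ has $Z^{\textrm{in}}_{v_j^{(i)}}\equiv 0$. The paper devotes an explicit case distinction to exactly this degenerate configuration, and your proof needs the same supplement (or an initial reduction eliminating nested hyperedges, which the theorem as stated does not assume). A second, much smaller, omission is that the induction is applied to $\mathcal{T}_j^{(i)}$, which Definition~\ref{def:weitz} builds from the connected component of $v_j^{(i)}$ in $\mathcal{H}_j^{(i)}$; one should note, as the paper implicitly does by reducing to connected $\mathcal{H}$, that the ratio $Z^{\textrm{in}}_{v_j^{(i)}}/Z$ is unchanged when passing to that component.
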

\begin{proof}
Proof by induction on the number of vertices of $\mathcal{H}$.

First of all we may assume that $\mathcal{H}$ is connected, since otherwise we could take $\mathcal{H}'$ to be the connected component of $v$ for which the ratio is the same function allowing us to conclude the theorem by induction.

If $|V(\mathcal{H})|=1$ or $|E(\mathcal{H})|\le 1$ the theorem holds because $\mathcal{H}$ is a hypertree and thus isomorphic to $\mathcal{T}$.

Now let us assume that for any connected hypergraph on $n\ge 1$ vertices the statement holds. Let $\mathcal{H}$ be a connected hypergraph on $n+1$ vertices where $v$ has degree $d\ge 1$ and let $\mathcal{T}$ be a hypertree obtained by Definition~\ref{def:weitz}. Using the same notation as in this definition we additionally define the sets $E_i=\{e_1,\dots,e_i\}\subseteq E(v)=\{e_1,\dots,e_d\}=\{e\in E~|~ v\in e\}$ for $i=0,\dots,d$ and $E'_i=\{e'_1,\dots,e'_i\}\subseteq E'(r)=\{e'_1,\dots,e'_d\}=\{e\in E(\mathcal{T})~|~ r\in e\}$.

Then 
\begin{align*}
R_{v}(\mathcal{H};\bm\lambda)=\frac{Z^\textrm{in}_{v}(\mathcal{H};\bm\lambda)}{Z^\textrm{out}_{v}(\mathcal{H};\bm\lambda)}=\frac{\lambda_v Z(\mathcal{H}\ominus v;\bm\lambda)}{Z(\mathcal{H}-E(v)\ominus v;\bm\lambda)}=\lambda_v\prod_{i=1}^d\frac{Z(\mathcal{H}-E_{i-1}\ominus v;\bm\lambda)}{Z(\mathcal{H}-E_i\ominus v;\bm{\lambda})}
\end{align*}
and by Lemma~\ref{lem:tree_recursion} we have
\begin{align*}
R_{r}(\mathcal{T};\bm\lambda')=\lambda_{v}\prod_{i=1}^d\left(1-\prod_{j=1}^{b_i}\frac{R_{v^{(i)}_j}(\mathcal{T}^{(i)}_{j},\bm\lambda')}{1+R_{v^{(i)}_j}(\mathcal{T}^{(i)}_{j};\bm\lambda')}\right).
\end{align*}

Thus it is sufficient to prove that for each $i=1,\dots,d$ we have 
\[
\frac{Z(\mathcal{H}-E_{i-1}\ominus v;\bm\lambda)}{Z(\mathcal{H}-E_i\ominus v;\bm{\lambda})}=\frac{Z(\mathcal{T}-E'_{i-1}\ominus r;\bm\lambda')}{Z(\mathcal{T}-E'_i\ominus r;\bm{\lambda'})}.
\]
Using the same notations as the Definition~\ref{def:weitz} let us define the sets $S_j=\{v_1^{(i)},\dots,v_{j-1}^{i}\}$ for $j=1,\dots,b_i+1$. Let $\mathcal{H}'=\mathcal{H}-E_{i-1}\ominus v$, then
\begin{align*}
\frac{Z(\mathcal{H}-E_{i-1}\ominus v;\bm\lambda)}{Z(\mathcal{H}-E_{i}\ominus v;\bm\lambda)}&=
\frac{Z(\mathcal{H}';\bm\lambda)}{Z(\mathcal{H}'-\{e_i\};\bm\lambda)}=\frac{Z(\mathcal{H}'-\{e_i\};\bm\lambda)-Z_{e_i}^\textrm{in}(\mathcal{H}'-\{e_i\};\bm\lambda)}{Z(\mathcal{H}'-\{e_i\},\bm\lambda)}\\
&=1-\frac{Z^\textrm{in}_{e_i}(\mathcal{H}'-\{e_i\},\bm\lambda)}{Z(\mathcal{H}'-\{e_i\},\bm\lambda)}.
\end{align*}
Now we claim that for any $i=1,\dots,d$ we have
\[
\frac{Z^\textrm{in}_{e_i}(\mathcal{H}'-\{e_i\},\bm\lambda)}{Z(\mathcal{H}'-\{e_i\},\bm\lambda)}=
\prod_{j=1}^{b_i}\frac{Z_{v^{(i)}_{j}}^\textrm{in}(\mathcal{H}^{(i)}_j;\bm\lambda)}{Z(\mathcal{H}^{(i)}_j;\bm\lambda)}
\]
as rational functions.  If $\mathcal{H}'$ does not have a hyperedge $f$ such that $f\subseteq e_i$, then it holds, because in this case for any $S\subseteq e_i$ we have $Z^\textrm{in}_S(\mathcal{H}'-\{e_i\};\bm\lambda)=Z(\mathcal{H}'-\{e_i\}\ominus S)$ and thus
\[
\frac{Z^\textrm{in}_{e_i}(\mathcal{H}'-\{e_i\},\bm\lambda)}{Z(\mathcal{H}'-\{e_i\},\bm\lambda)}=
 \prod_{j=1}^{b_i}\frac{Z_{S_{j}}^\textrm{in}(\mathcal{H}'-\{e_i\};\bm\lambda)}{Z_{S_{j-1}}^\textrm{in}(\mathcal{H}'-\{e_i\};\bm\lambda)}=\prod_{j=1}^{b_i}\frac{Z_{v^{(i)}_{j}}^\textrm{in}(\mathcal{H}^{(i)}_j;\bm\lambda)}{Z(\mathcal{H}^{(i)}_j;\bm\lambda)}
\]
as rational functions.  Otherwise, if there is a hyperedge contained in $e_i$, then the left-hand side is 0. But in this case the right-hand side is 0 as well. To see this let us choose $j$ to be the smallest integer such that $S_j$ contains a hyperedge of $\mathcal{H}'-\{e_i\}$. Then by construction $\mathcal{H}_j^{(i)}$ contains the hyperedge $\{v_j^{(i)}\}$, which means that
\[
\frac{Z_{v^{(i)}_{j}}^\textrm{in}(\mathcal{H}^{(i)}_j;\bm\lambda)}{Z(\mathcal{H}^{(i)}_j;\bm\lambda)}=0,
\]
proving the claim.

To finish the proof observe that by definition for any $i=1,\dots,d$ and $j=1,\dots,b_i$ we have
\[
    \frac{Z^{\textrm{in}}_{v_j^{(i)}}(\mathcal{H}_j^{(i)};\bm\lambda)}{Z^{\textrm{out}}_{v_j^{(i)}}(\mathcal{H}_j^{(i)};\bm\lambda)}=\frac{Z^{\textrm{in}}_{r_j^{(i)}}(\mathcal{T}_j^{(i)};\bm\lambda')}{Z^{\textrm{out}}_{r_j^{(i)}}(\mathcal{T}_j^{(i)};\bm\lambda')},
\]
thus
\[
    \frac{Z_{v^{(i)}_{j}}^\textrm{in}(\mathcal{H}^{(i)}_j;\bm\lambda)}{Z(\mathcal{H}^{(i)}_j;\bm\lambda)}=\frac{Z_{v^{(i)}_{j}}^\textrm{in}(\mathcal{H}^{(i)}_j;\bm\lambda)}{Z^\textrm{in}(\mathcal{H}^{(i)}_j;\bm\lambda)+Z^\textrm{out}(\mathcal{H}^{(i)}_j;\bm\lambda)}=\frac{R_{v^{(i)}_j}(\mathcal{T}^{(i)}_{j},\bm\lambda')}{1+R_{v^{(i)}_j}(\mathcal{T}^{(i)}_{j};\bm\lambda')}.
\]

\end{proof}

\begin{remark}\label{rem: weitz ratio}
    During the construction of the Weitz-hypertree it could occur that there are hyperedges of size 1 in $\mathcal{T}$. We claim that the theorem remains valid for some linear hypertree $\mathcal{T}'$ where each hyperedge has size at least 2 if $\{v\}\notin E(\mathcal{H})$.  To see this let $(\mathcal{T},r)$ be a Weitz-hypertree of $\mathcal{H}$ with starting vertex $v\in V(\mathcal{H})$. Let $V_1=\{v\in V(\mathcal{T})~|~\{v\}\in E(\mathcal{T})\}$ and let $E_1=\{e\in E(\mathcal{T})~|~e\cap V_1\neq \emptyset\}$. By definition any independent set of $\mathcal{T}$ has to avoid $V_1$, thus $Z(\mathcal{T};\bm\lambda)=Z(\mathcal{T}-E_1-V_1;\bm\lambda)$. The hypergraph $\mathcal{T}-E_1-V_1$ is disjoint union of linear hypertrees $\mathcal{T}_i$ of hyperedge-size at least 2 for $i=1,\dots,k$. Assume that $\mathcal{T}_1$ is the connected component of $r$ in $\mathcal{T}-E_1-V_1$. Now as rational functions we have that
    \[
        R_v(\mathcal{H};\bm\lambda)=R_r(\mathcal{T};\bm\lambda)=R_r(\mathcal{T}-E_1-V_1;\bm\lambda)=R_r(\mathcal{T}_1;\bm\lambda).
    \]
\end{remark}

In the next theorem we will reveal a useful algebraic property of the Weitz-hypertree. It is an extension of an analogous divisibility relation for graphs by the first author of the present paper \cite[Proposition 2.7]{bencs18}.

\begin{theorem}\label{thm:divisibility}
For any connected hypergraph $\mathcal{H}$ and $v\in V(\mathcal{H})$ let  $(\mathcal{T},r)$ be the rooted hypertree with labeling $\pi$ be given by Definition~\ref{def:weitz}.
Then
\[
    Z(\mathcal{H};\bm\lambda) ~|~ Z(\mathcal{T;\bm\lambda'}),
\]
where $\lambda'_u=\lambda_{\pi(u)}$ for $u\in V(\mathcal{T})$. In particular, if $Z(\mathcal{H};\bm\lambda)=0$ for some $\bm\lambda\in\mathbb{C}^{V(\mathcal{H})}$, then $Z(\mathcal{T};\bm\lambda')=0$.

Moreover, there exists subhypergraphs $\{\mathcal{H}_j\}_{j=1}^l$ of $\mathcal{H}$ with vertex set contained in $V(\mathcal{H})\setminus\{v\}$ and integers $k_j\ge 1$ for $j=1,\dots,l$, such that 
\[ 
    Z(\mathcal{{T};\bm\lambda'})=Z(\mathcal{H};\bm\lambda)\prod_{j=1}^lZ(\mathcal{H}_j;\bm\lambda)^{k_j}
\]
\end{theorem}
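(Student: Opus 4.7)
The plan is to prove the explicit product formula for $Z(\mathcal{T};\bm\lambda')$ by induction on $|V(\mathcal{H})|$; the divisibility assertion $Z(\mathcal{H};\bm\lambda)\mid Z(\mathcal{T};\bm\lambda')$ and the statement about common roots will then follow as immediate corollaries. The base case is when $\mathcal{H}$ is already a linear hypertree: by Definition~\ref{def:weitz} we then have $\mathcal{T}=\mathcal{H}$ and $\pi=\mathrm{id}$, so the product formula holds trivially with $l=0$.

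For the inductive step, suppose $\mathcal{H}$ is not a linear hypertree. Applying Theorem~\ref{thm:ratio} at $v$ and $r$, the identity $R_v(\mathcal{H};\bm\lambda)=R_r(\mathcal{T};\bm\lambda')$ rearranges (by adding $1$ to both sides) to
\[
  \frac{Z(\mathcal{T};\bm\lambda')}{Z(\mathcal{H};\bm\lambda)} \;=\; \frac{Z^{\mathrm{out}}_r(\mathcal{T};\bm\lambda')}{Z^{\mathrm{out}}_v(\mathcal{H};\bm\lambda)} \;=\; \frac{\prod_{i,j} Z(\mathcal{T}_j^{(i)};\bm\lambda')}{Z(\mathcal{H}-E(v)-v;\bm\lambda)},
\]
where the last equality uses Lemma~\ref{lem:basic_recursion} and the observation that $\mathcal{T}-E(r)-r$ is, by construction, the disjoint union of the subhypertrees $\mathcal{T}_j^{(i)}$. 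Each $\mathcal{T}_j^{(i)}$ is the Weitz-hypertree of the connected component $\hat{\mathcal{H}}_j^{(i)}$ of $v_j^{(i)}$ in $\mathcal{H}_j^{(i)}$; the hypergraph $\hat{\mathcal{H}}_j^{(i)}$ is a connected subhypergraph of $\mathcal{H}$ that does not contain $v$, so in particular has strictly fewer vertices than $\mathcal{H}$. The inductive hypothesis therefore produces $Z(\mathcal{T}_j^{(i)};\bm\lambda')=Z(\hat{\mathcal{H}}_j^{(i)};\bm\lambda)\cdot Q_j^{(i)}$, where each $Q_j^{(i)}$ is a product of partition functions of subhypergraphs of $\mathcal{H}$ whose vertex sets are contained in $V(\mathcal{H})\setminus\{v\}$.

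What remains is the auxiliary claim that $Z(\mathcal{H}-E(v)-v;\bm\lambda)$ divides $\prod_{i,j} Z(\hat{\mathcal{H}}_j^{(i)};\bm\lambda)$ with quotient expressible as a product of partition functions of subhypergraphs of $\mathcal{H}$ avoiding $v$. My approach is a telescoping argument along the intermediate hypergraphs $\mathcal{G}_{i,j}:=\mathcal{H}-\{e_1,\dots,e_i\}\ominus\{v,v_1^{(i)},\dots,v_{j-1}^{(i)}\}$ featured in Definition~\ref{def:weitz}; observe in particular that $\mathcal{G}_{d,1}=\mathcal{H}-E(v)-v$. At each step one compares $Z(\hat{\mathcal{H}}_j^{(i)})$ with $Z(\mathcal{G}_{i,j})$: they differ only by the partition functions of the other connected components of $\mathcal{G}_{i,j}$, all of which are subhypergraphs of $\mathcal{H}$ avoiding $v$. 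The $\ominus$-operation between consecutive $\mathcal{G}_{i,j}$'s may shorten hyperedges and produce singleton or nested hyperedges; by Lemma~\ref{lemma: at least 2} these split off further factors as partition functions of smaller subhypergraphs of $\mathcal{H}$.

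The main obstacle is precisely this bookkeeping. A single $\ominus$-step can simultaneously shorten many hyperedges, create several forced vertices, and disconnect the current hypergraph into multiple pieces. Verifying that every ``leftover'' factor arising in the telescoping is genuinely the partition function of a bona fide subhypergraph of $\mathcal{H}$ with vertex set in $V(\mathcal{H})\setminus\{v\}$, and that these factors combine cleanly with the inductive quotients $Q_j^{(i)}$ to yield the single product identity of the theorem, is the delicate technical part of the argument.
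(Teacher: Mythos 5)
Your skeleton is the same as the paper's: start from the identity $Z(\mathcal{T};\bm\lambda')=\frac{Z_r^{\mathrm{out}}(\mathcal{T};\bm\lambda')}{Z_v^{\mathrm{out}}(\mathcal{H};\bm\lambda)}\,Z(\mathcal{H};\bm\lambda)$ supplied by Theorem~\ref{thm:ratio}, write $Z_r^{\mathrm{out}}(\mathcal{T};\bm\lambda')=\prod_{i,j}Z(\mathcal{T}_j^{(i)};\bm\lambda')$, and apply induction on the number of vertices to the branches. But the proposal is not a proof: the ``auxiliary claim'' you isolate --- that $Z(\mathcal{H}-E(v)-v;\bm\lambda)$ divides $\prod_{i,j}Z(\hat{\mathcal{H}}_j^{(i)};\bm\lambda)\,Q_j^{(i)}$ with a quotient which is again a product of partition functions of subhypergraphs avoiding $v$ --- is essentially the entire content of the theorem beyond Theorem~\ref{thm:ratio}, and you leave it as unresolved ``bookkeeping''. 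The telescoping you sketch also meets a concrete obstruction. Consecutive hypergraphs $\mathcal{G}_{i,j}$ are related by the $\ominus$ operation, and by Lemma~\ref{lem:basic_recursion} the polynomial $Z(\mathcal{G}\ominus u;\bm\lambda)$ is (up to the factor $\bm{\lambda}_u$) the polynomial $Z_u^{\mathrm{in}}(\mathcal{G};\bm\lambda)$, which in general is neither a divisor nor a factor of $Z(\mathcal{G};\bm\lambda)$; so the ``leftover'' component factors produced at different steps have no reason to cancel or recombine into a polynomial quotient. Furthermore, the branch component $\hat{\mathcal{H}}_j^{(i)}$ can be strictly larger than the corresponding connected component of $\mathcal{H}-E(v)-v$, because the truncated edges $e_m\setminus\{v,\dots\}$ with $m>i$ are still present in $\mathcal{G}_{i,j}$ and may reconnect it to neighbors of $v$ lying in other components, while earlier vertices of $e_i$ removed by $\ominus$ may be missing from it; so the step-by-step comparison ``they differ only by the other connected components'' is not accurate as stated.

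At exactly this point the paper uses a selection device rather than a telescoping over all pairs $(i,j)$: decompose $\mathcal{H}-E(v)-v$ into connected components $\mathcal{H}'_1,\dots,\mathcal{H}'_l$ and assign to each component one distinguished branch, indexed by the ``smallest'' neighbor $u_k=v_{j_k}^{(i_k)}$ of $v$ occurring in $\mathcal{H}'_k$ with respect to an ordering tailored to Definition~\ref{def:weitz} (largest first-edge index, then earliest position inside that edge). This choice is what makes the induction hypothesis applicable to that branch and yields $Z(\mathcal{H}'_k;\bm\lambda)\mid Z(\mathcal{T}_{j_k}^{(i_k)};\bm\lambda')$; since distinct components select distinct branches, $Z_v^{\mathrm{out}}(\mathcal{H};\bm\lambda)=\prod_{k=1}^{l}Z(\mathcal{H}'_k;\bm\lambda)$ divides $Z_r^{\mathrm{out}}(\mathcal{T};\bm\lambda')$, and both the divisibility and the product formula then follow from the ratio identity. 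Without some such matching of the components of $\mathcal{H}-E(v)-v$ to specific branches, your argument does not close; supplying it would amount to reconstructing the paper's proof.
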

\begin{proof}
Using the notations of Definition~\ref{def:weitz} we define an ordering on the neighbors of $v$. We say that $u<w$ if and only if either
\[
 \argmin_{i=1,\dots,d}\{u\in e_i\}>\argmin_{i=1,\dots,d}\{w\in e_i\} 
\]
or
\[
 s=\argmin_{i=1,\dots,d}\{u\in e_i\}=\argmin_{i=1,\dots,d}\{w\in e_i\} \qquad\textrm{ and } \qquad\argmin_{j=1,\dots,b_s}\{u=v^{(s)}_j\}<\argmin_{j=1,\dots,b_s}\{w=v_{j}^{(s)}\}.
\]

We know that $Z_v^\textrm{out}(H)$ is the independence polynomial of the hypergraph $\mathcal{H}'=\mathcal{H} - E(v)\ominus v$. Let $\mathcal{H}'_1,\dots,\mathcal{H}'_l$ be the connected components of $\mathcal{H}'$. For each connected component $\mathcal{H}_k$ let $u_k=v_{j_k}^{(i_k)}$ be the smallest neighbor of $v$ appearing in $\mathcal{H}'_k$. Then we see that 
$Z(\mathcal{H}'_k;\bm\lambda)|Z(\mathcal{T}_{j_k}^{(i_k)};\bm\lambda')$ for each $k=1,\dots,l$ and therefore
\[
    Z_v^\textrm{out}(\mathcal{H};\bm\lambda)=Z(\mathcal{H}';\bm\lambda) = \prod_{k=1}^{l} Z(\mathcal{H}'_k;\bm\lambda) ~\Big|~\prod_{k=1}^{l} Z(\mathcal{T}_{j_k}^{(i_k)};\bm\lambda') ~\Big|~ Z_r^\textrm{out}(\mathcal{T};\bm\lambda').
\]
On the other hand we know that as rational functions 
\[
    Z(\mathcal{T};\bm\lambda')=\frac{Z_r^\textrm{out}(\mathcal{T};\bm\lambda')}{Z_v^\textrm{out}(\mathcal{H};\bm\lambda)} Z(\mathcal{H};\bm\lambda)
\]
and thus $Z(\mathcal{H};\bm\lambda)|Z(\mathcal{T};\bm\lambda')$.

The second part follows by induction from the previous equation.
\end{proof}

\begin{remark}\label{rem: weitz divisibility}
    It is not hard to see that if $\mathcal{H}$ is a connected hypergraph and there is no hyperedge $e\in E$, that is contained in an other hyperedge of $\mathcal{H}$, then the rooted hypertree $(\mathcal{T}',r)$ obtained in Remark~\ref{rem: weitz ratio} also satisfies the divisibility relation of the previous theorem, i.e.
    \[
        Z(\mathcal{H};\bm\lambda) ~\big|~ Z(\mathcal{T}';\bm\lambda').
    \]
    In particular, $\mathcal{T}'$ is a linear hypertree of hyperedge-size at least 2.
\end{remark}



\section{Ratios and conditions for zero-freeness}\label{sec: zerofreeness}

In the previous section we saw that a zero of a hypergraph yields a zero for a corresponding linear hypertree. In this section we will show that in fact a zero of a hypergraph implies that there is a rooted linear hypertree with ratio equal to $-1$. It follows from Lemma~\ref{lem:tree_recursion} that the ratios of linear hypertrees can be calculated recursively. Roughly, to show zero-freeness for hypergraphs it is therefore sufficient to show that the ratios of an appropriate class of rooted linear hypertrees stay trapped in a region of the complex plane under application of the appropriate set of maps. We will make this more precise for the multivariate case in Section~\ref{sec: multivariate zero-freeness}, where we will also prove Theorem~\ref{thm: Shearer all hypergraphs}, and for the univariate case in Section~\ref{sec: univariate zero-freeness}.

The key relation between zeros of hypergraphs and $-1$ values of ratios of linear hypertrees is given by the following lemma. This lemma is a generalization of \cite[Lemma 2.1]{BBGPR}, which contains a similar statement for graphs.

\begin{lemma}\label{lem:ratio_vs_zero}
Let $A\subseteq \mathbb{C}\setminus\{-1\}$ be fixed. Then the following are equivalent:
\begin{enumerate}
\item There exists a hypergraph $\mathcal{H}$ of degree at most $\Delta$ and hyperedge-size at most $b+1$ and $\bm\lambda\in A^{V(\mathcal{H})}$ such that $Z(\mathcal{H};\bm\lambda)=0$.
\item There exists a linear hypertree $\mathcal{T}$ of degree at most $\Delta$ and hyperedge-size at most $b+1$ and $\bm\lambda\in A^{V(\mathcal{T})}$ such that $Z(\mathcal{T};\bm\lambda)=0$.
\item There exists a linear hypertree $\mathcal{T}$ of degree at most $\Delta$ and hyperedge-size at most $b+1$, $\bm\lambda\in A^{V(\mathcal{T})}$ and a vertex $r\in V(\mathcal{T})$ of degree 1 such that $R_r(\mathcal{T};\bm\lambda)=-1$.
\end{enumerate}
Moreover, if $\mathcal{H}$ is a $(b+1)-$uniform linear hypertree, then $\mathcal{T}$ can be chosen to be $(b+1)$-uniform as well.
\end{lemma}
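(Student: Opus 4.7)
The plan is to prove the chain $(3)\Rightarrow(2)\Rightarrow(1)\Rightarrow(2)\Rightarrow(3)$, with the moreover clause handled in the final step. Two of these arrows are essentially free: for $(3)\Rightarrow(2)$, if $R_r(\mathcal{T};\bm\lambda)=-1$ then $Z_r^{\textrm{out}}(\mathcal{T};\bm\lambda)\neq 0$ and $Z(\mathcal{T};\bm\lambda)=Z_r^{\textrm{in}}+Z_r^{\textrm{out}}=(R_r+1)Z_r^{\textrm{out}}=0$, while $(2)\Rightarrow(1)$ is immediate since a linear hypertree is a hypergraph.

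For $(1)\Rightarrow(2)$ I would combine Definition~\ref{def:weitz} with Theorem~\ref{thm:divisibility}. Passing to the connected component of $\mathcal{H}$ containing a root $v$ with vanishing partition function, the Weitz-hypertree $(\mathcal{T},r)$ with labeling $\pi$ and $\bm\lambda'_u:=\bm\lambda_{\pi(u)}$ satisfies $Z(\mathcal{H};\bm\lambda)\mid Z(\mathcal{T};\bm\lambda')$, so $Z(\mathcal{T};\bm\lambda')=0$ and $\bm\lambda'\in A^{V(\mathcal{T})}$. A direct check of the recursive construction shows that each $u\in V(\mathcal{T})$ has degree at most the degree of $\pi(u)$ in $\mathcal{H}$, hence at most $\Delta$, and each hyperedge of $\mathcal{T}$ has size at most that of the corresponding hyperedge in $\mathcal{H}$, hence at most $b+1$. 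If $\mathcal{H}$ is already a $(b+1)$-uniform linear hypertree, the construction simply returns $\mathcal{T}=\mathcal{H}$, so uniformity is trivially preserved.

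The bulk of the work is $(2)\Rightarrow(3)$, proved by induction on $|V(\mathcal{T})|$. Replacing $\mathcal{T}$ by a connected component on which $Z$ vanishes, assume $\mathcal{T}$ is connected. If $\mathcal{T}$ has no hyperedges, then $Z(\mathcal{T};\bm\lambda)=\prod_v(1+\lambda_v)\neq 0$ since $-1\notin A$, a contradiction; so $\mathcal{T}$ has at least one hyperedge and therefore a leaf hyperedge $e$. I first use Lemma~\ref{lemma: at least 2} to dispose of singleton hyperedges by passing to a strictly smaller hypertree with $Z=0$ (and invoking the induction hypothesis); in the $(b+1)$-uniform setting with $b\geq 1$ this step is never needed. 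So I may assume $|e|\geq 2$ and pick a degree-one vertex $v\in e$. By Lemma~\ref{lem:basic_recursion}, $Z_v^{\textrm{out}}(\mathcal{T};\bm\lambda)=Z(\mathcal{T}-\{e\}-v;\bm\lambda)$. If this is nonzero, then $R_v(\mathcal{T};\bm\lambda)=-1$ and we are done with $\mathcal{T}'=\mathcal{T}$ and $r=v$. Otherwise $\mathcal{T}-\{e\}-v$ is a strictly smaller linear hypertree with $Z=0$ and the same degree and hyperedge-size bounds (and still $(b+1)$-uniform whenever $\mathcal{T}$ is, since removing a leaf hyperedge with one of its degree-one vertices leaves every other hyperedge untouched), and the induction hypothesis yields the required $\mathcal{T}'$ and $r$.

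The main obstacle is the uniformity bookkeeping in the induction: one must remove an entire leaf hyperedge together with a degree-one vertex inside it, rather than peeling off vertices one at a time, since otherwise hyperedge sizes would shrink. Picking $v$ inside a leaf hyperedge $e$, rather than just any degree-one vertex, is what makes the reduction $\mathcal{T}\mapsto\mathcal{T}-\{e\}-v$ clean and closes the induction in both the general and the $(b+1)$-uniform cases.
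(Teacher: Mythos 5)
Your proof is correct and follows essentially the same route as the paper: the Weitz-hypertree construction together with the divisibility result (Theorem~\ref{thm:divisibility}) for $(1)\Rightarrow(2)$, and leaf-peeling for $(2)\Rightarrow(3)$. The only cosmetic difference is that you run $(2)\Rightarrow(3)$ as an explicit induction with a preliminary cleanup of singleton hyperedges, whereas the paper packages the same idea as a minimal-counterexample argument (which handles singletons implicitly, since a minimal example cannot contain one).
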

\begin{proof}
$(1\Rightarrow 2)$ Assume $\mathcal{H}$ has the stated properties and let $v$ be any vertex of $\mathcal{H}$. By Theorem~\ref{thm:divisibility} we know that the rooted hypertree $\mathcal{T}$ given by Definition~\ref{def:weitz} satisfies that 
\[
    Z(\mathcal{T};\bm\lambda')=0,
\]
where $\lambda'_u=\lambda_{\pi(u)}\in A$. By construction this is a linear hypertree that has degree at most $\Delta$ and hyperedge-size at most $b+1$. 

If $\mathcal{H}$ is a linear uniform hypertree, this implication is trivial.

$(2\Rightarrow 3)$
Now let $\mathcal{T}'$ be a linear hypertree on a minimal number of vertices such that it has degree at most $\Delta$ and hyperedge-size at most $b+1$ and there exists $\bm\lambda\in A^{V(\mathcal{T}')}$ such that $Z(\mathcal{T}',\bm\lambda)=0$. Let $r$ be a leaf vertex of $\mathcal{T}'$ and let $e$ be the unique incident hyperedge at $r$. Since
\[
0=Z(\mathcal{T}';\bm\lambda)=Z_r^\textrm{in}(\mathcal{T}';\bm\lambda)+Z_r^\textrm{out}(\mathcal{T}';\bm\lambda)
\]
and $Z_r^\textrm{out}(\mathcal{T}';\bm\lambda)=Z(\mathcal{T}'-\{e\}- \{r\};\bm\lambda)$ is not zero by the choice of $\mathcal{T}'$, therefore
\[
    R_r(\mathcal{T}';\bm\lambda)=-1.
\]

Observe that if $\mathcal{T}'$ would be a uniform linear hypertree, then $\mathcal{T}''=\mathcal{T}'-\{e\}-r$ would consist of isolated vertices and a uniform linear hypertree, thus $Z(\mathcal{T}'';\bm\lambda)$ is again not zero similarly implying $R_r(\mathcal{T}';\bm\lambda)=-1$.

$(3\Rightarrow 1)$ Let $\mathcal{T}$ be a linear hypertree that satisfies the properties in $(3)$. We claim that $Z(\mathcal{T};\bm\lambda)=0$. If $Z^\textrm{out}_r(\mathcal{T};\bm\lambda)=0$, then $Z^\textrm{in}_r(\mathcal{T};\bm\lambda)$ has to be 0 as well, thus
\[
Z(\mathcal{T};\bm\lambda)=Z^\textrm{in}_r(\mathcal{T};\bm\lambda)+Z^\textrm{out}_r(\mathcal{T};\bm\lambda)=0.
\]
If $Z^\textrm{out}(\mathcal{T};\bm\lambda)\neq 0$, then
\[
Z(\mathcal{T};\bm\lambda)=Z^\textrm{out}_r(\mathcal{T};\bm\lambda)\left(1+\frac{Z^\textrm{in}_r(\mathcal{T};\bm\lambda)}{Z^\textrm{out}_r(\mathcal{T};\bm\lambda)}\right)=0.
\]
\end{proof}

\subsection{Notation and the Grace--Walsh--Szeg\H o Theorem}
\label{sec: Notation}
Fix $d \in \mathbb{Z}_{\geq 2}$ for the remainder of this section. Let $b_1, \dots, b_d \in \mathbb{Z}_{\geq 1}$ and for $i=1, \dots, d$ let $v^i \in (\mathbb{C} \setminus \{-1\})^{b_i}$. We define the map $F_{\lambda}$ by 
\[
    \label{eq: region tilde A}
    F_{\lambda}(v^1, \dots, v^d) = \lambda\cdot\prod_{i=1}^d \left[1-\prod_{j=1}^{b_i} \frac{v^i_j}{1+v^i_j}\right]. 
\]
We say that a closed region $A \subseteq \mathbb{C}\setminus \{-1\}$ is strictly forward invariant for $F_{\lambda}$ if there exists a closed subset $\tilde{A} \subset \Int(A)$ such that for any $b_1, \dots, b_d$ and any $v^i \in A^{b_i}$ we have that 
\[
    F_{\lambda}(v^1, \dots, v^d) \in \tilde{A}.
\]

We denote by $f_{\lambda,b}$ the following univariate specialization of $F_\lambda$
\[
    f_{\lambda,b}(z)=F_\lambda(v(z),\dots,v(z)),
\]
where $v(z)=z\cdot (1,\dots,1)\in \mathbb{C}^{b}$. We will usually drop the subscript $b$ and, unless some other $b$ is specified, $f_\lambda$ will denote $f_{\lambda,1}$. These maps are rational maps, which we will consider as a holomorphic map from the Riemann sphere $\Chat = \mathbb{C} \cup \{\infty\}$ to itself. Since we will be using it often, we also define the M\"obius transformation 
\[
    \mu(z) = \frac{z}{1+z}.
\]
We say that a closed region $A \subseteq \Chat$ is strictly forward invariant for a rational map $g$ if $g(A) \subset \Int (A)$. 

A \emph{generalized circle} is a circle in $\Chat$, i.e. either a circle or a straight line in $\mathbb{C}$. A \emph{circular region} is a subset of either $\Chat$ or $\mathbb{C}$ bounded by a generalized circle that is either open or closed. A circular region in $\mathbb{C}$ is thus either a disk, the complement of a disk or a half-plane. We recall that M\"obius transformation are comformal maps that send generalized circles to generalized circles and thus also circular regions to circular regions. We will denote the closed disk centered around $0$ of radius $r$ by $\B{r}$ and the open unit disk by $\mathbb{D}$. The following theorem by Grace, Walsh and Szeg\H o \cite{grace1902zeros,walsh1922location,szego1922bemerkungen} will be used multiple times throughout the remainder of this paper. Its formulation was taken from \cite{converseGWS}.
\begin{theorem}[Grace--Walsh--Szeg\H o]
\label{thm: GWZ}
Let $f \in \mathbb{C}[z_1, \dots, z_n]$ be a multiaffine polynomial that is invariant under all permutations of the variables. Let $A \subseteq \mathbb{C}$ be a convex circular region. For any $(\zeta_1, \dots, \zeta_n) \in A^n$ there is a $\zeta \in A$ such that 
\[
    f(\zeta_1, \dots, \zeta_n) = f(\zeta, \dots, \zeta).
\]
    
\end{theorem}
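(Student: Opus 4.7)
The plan is to invoke the classical apolarity theorem of Grace. Set $y_0 := f(\zeta_1,\ldots,\zeta_n)$ and $g(z) := f(z,z,\ldots,z)$; the conclusion amounts to producing $\zeta \in A$ with $p(\zeta) = 0$, where $p(z) := g(z) - y_0$. I would argue by contradiction, assuming $p$ has no zero in $A$, and introduce the auxiliary polynomial $q(z) := \prod_{i=1}^n (z-\zeta_i)$, which by hypothesis has all $n$ of its zeros in $A$.

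The key step is to show that $p$ and $q$ are \emph{apolar}. Since $f$ is multiaffine and symmetric, it admits a unique expansion $f = \sum_{k=0}^n A_k e_k(z_1,\ldots,z_n)$ in the basis of elementary symmetric polynomials, so that $g(z) = \sum_{k=0}^n A_k \binom{n}{k} z^k$. Writing $p$ and $q$ in the normalized form $p(z) = \sum_{k=0}^n \binom{n}{k} a_k z^k$ and $q(z) = \sum_{k=0}^n \binom{n}{k} b_k z^k$, one reads off $a_0 = A_0 - y_0$, $a_k = A_k$ for $k \ge 1$, and $\binom{n}{k} b_k = (-1)^{n-k} e_{n-k}(\zeta_1,\ldots,\zeta_n)$. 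A direct calculation then yields
\[
    \sum_{k=0}^n (-1)^k \binom{n}{k} a_k b_{n-k} = \sum_{k=0}^n A_k e_k(\zeta_1,\ldots,\zeta_n) - y_0 = f(\zeta_1,\ldots,\zeta_n) - y_0 = 0,
\]
which is precisely the apolarity condition for the pair $(p,q)$.

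Next I would apply Grace's apolarity theorem: if two polynomials of formal degree $n$ are apolar and one has all its zeros in a closed circular region $C \subseteq \Chat$, then the other has at least one zero in $C$. Viewing the convex circular region $A$ as a closed disk or closed half-plane in $\Chat$ (which, crucially, does \emph{not} contain $\infty$), the fact that all zeros of $q$ lie in $A$ forces $p$ to have a zero in $A$ as well, contradicting the standing assumption.

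The main obstacle is Grace's apolarity theorem itself, the substantive classical input. A standard proof combines the $\mathrm{PSL}_2(\mathbb{C})$-invariance of the apolarity pairing on the Riemann sphere with a continuity argument tracking zeros as the configuration is deformed, ultimately reducing to the fundamental theorem of algebra. A secondary technical subtlety is that $p$ may have a ``zero at infinity'' when the leading coefficient $a_n$ vanishes; convexity of $A$ is precisely what guarantees $\infty \notin A$, so the zero produced by Grace's theorem is a finite element of $A$ rather than $\infty$. A small perturbation of the coefficients of $p$ preserving apolarity can be used to make $a_n \neq 0$, after which continuity places a finite zero in $A$.
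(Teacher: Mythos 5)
The paper does not actually prove Theorem~\ref{thm: GWZ}: it is quoted as a classical result of Grace, Walsh and Szeg\H{o}, with the formulation taken from the cited literature. So there is no internal proof to compare with; your apolarity route is the standard classical one, and your central computation is correct: with $p(z)=g(z)-y_0=\sum_k\binom{n}{k}a_kz^k$ and $q(z)=\prod_i(z-\zeta_i)=\sum_k\binom{n}{k}b_kz^k$ one indeed gets $\sum_{k}(-1)^k\binom{n}{k}a_kb_{n-k}=f(\zeta_1,\dots,\zeta_n)-y_0=0$, so $p$ and $q$ are apolar, and when $A$ is a (closed) disk Grace's theorem finishes the argument even if $a_n=0$, because a disk of $\mathbb{C}$, viewed as a circular region of $\Chat$, omits $\infty$, so the zero it supplies is automatically finite. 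The open case is also only a minor omission: shrink to a closed circular subregion of $A$ containing the finitely many $\zeta_i$.

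The genuine gap is the combination ``$A$ a half-plane and $f$ of total degree less than $n$'' (i.e.\ $a_n=A_n=0$), which the stated theorem allows; note that the paper does apply the theorem to half-planes, in Lemma~\ref{lem: log convexity}. For a half-plane, the circular region of $\Chat$ that one must feed into Grace's theorem has $\infty$ on its boundary, so your assertion that convexity matters only because it ``guarantees $\infty\notin A$'' is not accurate there: Grace may return the zero of $p$ at infinity, and that is not excluded by $A\subseteq\mathbb{C}$. Your proposed repair--perturb $p$ inside the apolarity hyperplane so that $a_n\neq 0$ and pass to the limit--fails for the same reason: as the perturbation tends to $0$, the $n-\deg p$ extra zeros of the perturbed polynomial escape to infinity, and nothing in your argument prevents the zero produced by Grace from being one of those escaping zeros for every value of the perturbation parameter; since a half-plane is unbounded, the limit then yields no finite zero of $p$ in $A$. (For a disk the continuity argument works, but there it is not needed.) This is precisely why the classical coincidence theorem is stated with the dichotomy ``either the region is convex or $f$ has total degree $n$'': the convex-but-degree-deficient case requires a genuinely different argument (e.g.\ Walsh's induction on the number of variables, or a reduction from $n$ variables to $\deg f$ variables in which convexity is used substantively), not merely the observation that $\infty\notin A$. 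If you only need the theorem as the paper uses it (the symmetric multiaffine polynomials there are $z_1\cdots z_b$, of full degree), your nondegenerate Grace argument suffices; but as a proof of the theorem as stated, the half-plane, degenerate-degree case is missing.
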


\subsection{Zero-free disks for the multivariate independence polynomial}
\label{sec: multivariate zero-freeness}


Let us recall the notation from the introduction
\[
    \lambda_s(\Delta)=\frac{(\Delta-1)^{\Delta-1}}{(\Delta-2)^\Delta}.
\]
Now we will establish one of our main theorems.

\theoremShearer*
\begin{proof}
 By Lemma~\ref{lem:ratio_vs_zero} it is sufficient to prove that  there is no rooted linear hypertree  $(\mathcal{T},r)$ and $\bm\lambda\in \B{\lambda_s(\Delta)}^{V(\mathcal{T})}$, such that
\[
    R_r(\mathcal{T};\bm\lambda)=-1,
\]
where $\mathcal{T}$ has of degree at most $\Delta$ and hyperedge-size at most $b+1$, and the degree of $r$ is 1. By Lemma~\ref{lemma: at least 2} and Remark~\ref{rem: weitz divisibility} it is sufficient to further restrict to linear hypertrees with hyperedge-size at least 2. Thus for the rest of the proof we will consider hypertrees to have hyperedge-size at least 2.

We claim that for any rooted linear hypertree $(\mathcal{T},r)$ of down degree at most $\Delta-1$ we have that $R_r(\mathcal{T},\bm\lambda)$ is well-defined and is an element of $\B{1/\Delta}$. We will prove this by induction on the number of vertices of the hypertree. If $|V(\mathcal{T})|=1$ then $R_r(\mathcal{T},\lambda_r)=\lambda_r\in \B{\lambda_s(\Delta)}\subseteq \B{1/\Delta}$.
Now let us assume that we know that any ratio of hypertrees on $n\ge 1$ vertices with fugacities from $\B{\lambda_s(\Delta)}$ is well-defined and an element of $\B{1/\Delta}$.
Let $(\mathcal{T},r)$ be a hypertree on $n+1$ vertices with down-degree at most $d$ and let $\bm\lambda\in \B{\lambda_s(\Delta)}^{V(\mathcal{T})}$. Then by Lemma~\ref{lem:tree_recursion} we know that
\[
    R_r(\mathcal{T};\bm\lambda)=\lambda_r\prod_{e: r\in e} \left(1-\prod_{v \in e\setminus \{r\}}\frac{R_v(\mathcal{T}_v;\bm\lambda)}{1+R_v(\mathcal{T}_v;\bm\lambda)}\right),
\]
where $\mathcal{T}_v$ is the connected component of $v$ in $\mathcal{T}-E(r)$. By induction, we know that $R_v(\mathcal{T}_v;\bm\lambda)\neq -1$, thus $R_r(\mathcal{T};\bm\lambda)$ is well-defined.
Also we know by induction that for any neighbor $v$ of $r$
\[
    \left|R_v(\mathcal{T}_v;\bm\lambda)\right|\le \frac{1}{\Delta},
\]
thus
\[
    \left|\frac{R_v(\mathcal{T}_v;\bm\lambda)}{1+R_v(\mathcal{T}_v;\bm\lambda)}\right|\le  \frac{1}{\Delta-1},
\]
therefore for any edge $e$ that is incident to $v$ we have
\[
    \left|\prod_{v\in e\setminus\{r\}}\frac{R_v(\mathcal{T}_v;\bm\lambda)}{1+R_v(\mathcal{T}_v;\bm\lambda)}\right|\le  \frac{1}{\Delta-1},
\]
since $1/(\Delta-1)\le 1$. Thus,
\[
    \left|R_r(\mathcal{T};\bm\lambda)\right|=|\lambda_r|\prod_{e: r\in e} \left|1-\prod_{v \in e\setminus \{r\}}\frac{R_v(\mathcal{T}_v;\bm\lambda)}{1+R_v(\mathcal{T}_v;\bm\lambda)}\right|\le \lambda_s(\Delta)\left(1+\frac{1}{\Delta-1}\right)^{\Delta-1}\le \frac{1}{\Delta-1}
\]

\end{proof}

Let us remark that in the previous argument we actually proved that for any $\lambda\in \B{\lambda_s(\Delta)}$ the disk $\B{1/\Delta}$ is $F_\lambda$ forward invariant (where $\Delta = d+1$). In the next lemma we have a similar statement, but for $(b+1)$ uniform hypertrees.

\begin{lemma}
\label{lem: zero-free disk k-uniform hypertrees}
Let $b\ge 1$. Suppose there exists an $0<r<1$ such that  $f_{\lambda,b}(\B{R})\subseteq \B{R}$. Then for any $(b+1)$-uniform hypertree $\mathcal{T}$ of maximum degree $\Delta=d+1$ and for any $\bm\lambda \in \B{|\lambda|}^{V(\mathcal{T})}$ we have $Z(\mathcal{T};\bm\lambda)\neq 0$.
\end{lemma}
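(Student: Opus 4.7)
The plan is to mirror the structure of the proof of Theorem~\ref{thm: Shearer all hypergraphs}, but to exploit $(b+1)$-uniformity through the Grace--Walsh--Szeg\H{o} theorem so that the univariate forward invariance $f_{\lambda,b}(\B{R})\subseteq\B{R}$ transfers to the full multivariate recursion. First I would apply Lemma~\ref{lem:ratio_vs_zero} with $A=\B{R}$ to reduce the non-vanishing of $Z(\mathcal{T};\bm\lambda)$ to showing that no rooted $(b+1)$-uniform linear hypertree admits a ratio equal to $-1$; this is legitimate because $|\lambda|=|f_{\lambda,b}(0)|\leq R$ by forward invariance, because $R<1$ gives $-1\notin\B{R}$, and because in the uniform setting no hyperedge is contained in another, so Lemma~\ref{lemma: at least 2} and Remark~\ref{rem: weitz divisibility} apply cleanly.

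Next I would show by induction on $|V(\mathcal{T})|$ that every rooted $(b+1)$-uniform linear hypertree $(\mathcal{T},r)$ with maximum degree at most $\Delta=d+1$, root degree at most $d$, and fugacities in $\B{|\lambda|}^{V(\mathcal{T})}$ satisfies $R_r(\mathcal{T};\bm\lambda)\in\B{R}$. The base case $|V(\mathcal{T})|=1$ is immediate since $|\lambda|\leq R$. For the inductive step, Lemma~\ref{lem:tree_recursion} gives
\[
    R_r(\mathcal{T};\bm\lambda) = \lambda_r \prod_{i=1}^{d'} \left(1 - \prod_{j=1}^{b} \mu\bigl(R_{v^{(i)}_j}(\mathcal{T}_{i,j};\bm\lambda)\bigr)\right)
\]
with $d'\leq d$, and each inner ratio belongs to $\B{R}$ by the inductive hypothesis.

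The crucial step is to rewrite the multivariate product $\prod_j \mu(R_{v^{(i)}_j})$ in univariate form. Under the M\"obius change of variables $w^i_j = \mu(R_{v^{(i)}_j})$, the $w^i_j$ lie in the set $D := \mu(\B{R})$, which is a bounded closed disk (hence a convex circular region) because $R<1$ makes $\mu$ holomorphic on the compact set $\B{R}$. Since $(w_1,\dots,w_b)\mapsto\prod_j w_j$ is multi-affine and symmetric, Theorem~\ref{thm: GWZ} applied hyperedge by hyperedge yields a $\zeta^i\in D$ with $\prod_j w^i_j = (\zeta^i)^b$; writing $\zeta^i=\mu(u^i)$ with $u^i\in\B{R}$, one obtains
\[
    R_r = \lambda_r \prod_{i=1}^{d'} \bigl(1 - \mu(u^i)^b\bigr).
\]
Setting $M := \max_{z\in\B{R}}|1-\mu(z)^b|$, I then bound $|R_r|\leq |\lambda_r| M^{d'}$; since $M\geq|1-\mu(0)^b|=1$ and $d'\leq d$, this is at most $|\lambda| M^d$, and the univariate invariance $|\lambda|M^d = \max_{z\in\B{R}}|f_{\lambda,b}(z)|\leq R$ closes the induction.

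The main obstacle is the Grace--Walsh--Szeg\H{o} step. Uniformity of $\mathcal{T}$ is what makes $\prod_{j=1}^b w^i_j$ symmetric in $b$ variables so that a single representative $\zeta^i$ captures each hyperedge; and the assumption $R<1$ is what guarantees that $D=\mu(\B{R})$ is a bounded closed disk, hence a convex circular region to which GWS applies. With these two ingredients the argument reduces to the uniform bound $|1-\mu(u)^b|\leq M$ on $\B{R}$ together with a direct appeal to the hypothesis $f_{\lambda,b}(\B{R})\subseteq\B{R}$.
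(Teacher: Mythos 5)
Your proposal is correct and follows essentially the same route as the paper: reduce via Lemma~\ref{lem:ratio_vs_zero} to showing that every ratio of a rooted $(b+1)$-uniform linear hypertree with fugacities in $\B{|\lambda|}$ stays in $\B{R}$, then use the Grace--Walsh--Szeg\H{o} theorem on the disk $\mu(\B{R})$ to collapse each hyperedge to a single variable and close the induction with the univariate invariance $f_{\lambda,b}(\B{R})\subseteq\B{R}$. The only cosmetic difference is that you bound $|R_r|$ by $|\lambda|M^{d}$ with $M=\max_{z\in\B{R}}|1-\mu(z)^b|\geq 1$, whereas the paper substitutes the maximizing point $w\in\mu(\B{R})$ back through $\mu^{-1}$ and bounds directly by $|f_{\lambda,b}(\mu^{-1}(w))|\leq R$; the substance is identical.
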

\begin{proof}
As in the previous proof, it is sufficient to prove that $R_r(\mathcal{T};\bm\lambda)$ is well-defined and not $-1$ for some $r\in V(\mathcal{T})$ leaf. To be more precise we claim that all the ratios of $(b+1)$-uniform hypertrees of maximum degree $\Delta$ and for any leaf $r$ the corresponding ratio is contained in $\B{R}$. To prove this we show that the one vertex hypertree has ratio in $\B{R}$ and $\underbrace{\B{R}\times \dots\times \B{R}}_{b}$ is mapped by $F_{\lambda'}$ into $\B{R}$ for any $\lambda'\in \B{|\lambda|}$. This is sufficient, since now by induction on the number of vertices it follows that any ratio is well-defined and contained in $\B{R}$.

If $\mathcal{T}$ is an isolated vertex, then $R_r(\mathcal{T};\bm\lambda)=\lambda_v\in \B{|\lambda|}$.
To prove the second part  let us consider the product
\[
    \prod_{j=1}^{b}\frac{v_j^i}{1+v_j^i},
\]
where $v_j^i\in \B{R}$. Since $\mu(z)=\frac{z}{1+z}$ is a M\"obius transformation, therefore $\mu(v_j^i)$ is an element of the disk $\mu(\B{R})$ for $j=1,\dots,b$ and $i=1,\dots,d$. As the map $g_1(z_1,\dots,z_b)=z_1\dots z_b$ is a symmetric polynomial, therefore by Theorem~\ref{thm: GWZ} we know that there exists $w_i\in\mu(\B{R})$ such that
\[
    \prod_{j=1}^{b}\frac{v_j^i}{1+v_j^i}=(w_i)^b,
\]
i.e.
\[
    F_{\lambda'}(v^1,\dots,v^d)=\lambda'\prod_{i=1}^d(1-w_i^b).
\]
Now let $w\in \mu(\B{R})$ be chosen such that it maximizes $|1-z^b|$ over $\mu(\B{R})$.
Thus, if $v=\mu^{-1}(w)\in \B{R}$, then
\[
    \left|F_{\lambda'}(v^1,\dots,v^d)\right|\le |\lambda'|\left|\prod_{i=1}^d(1-w^b)\right|\le |\lambda|\left|\prod_{i=1}^d\left(1-\left(\frac{v}{1+v}\right)^b\right)\right|=|f_{\lambda,b}(v)|\le R.
\]
\end{proof}
\subsection{A zero-freeness condition for the univariate independence polynomial}
\label{sec: univariate zero-freeness}

In this section we describe a sufficient condition for showing that $\lambda$ cannot be a zero of a bounded degree hypergraph. We recall that $d \in \mathbb{Z}_{\geq 2}$ is assumed to be fixed and that the maps $F$ and $f$ are defined in Section~\ref{sec: Notation}.

\begin{lemma}
Suppose there is a closed strictly forward invariant region $A \subseteq \{z \in \mathbb{C}: \Re(z) \geq -\frac{1}{2}\}$ for $F_{\lambda_0}$ for some $\lambda_0 \in \mathbb{C}\setminus \{0\}$. Then there is a neighborhood $U$ of $\lambda_0$ such that $A$ is strictly forward invariant for $F_\lambda$ for all $\lambda \in U$.
\end{lemma}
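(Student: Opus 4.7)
The plan is to show that $F_\lambda$ is close to $F_{\lambda_0}$ uniformly on inputs from $A$, and then to thicken $\tilde A$ slightly to absorb this perturbation while remaining inside $\Int(A)$.

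The key preliminary observation is that for any $z \in A$ the image $\mu(z) = z/(1+z)$ satisfies $|\mu(z)| \leq 1$: this is immediate from $\Re(z) \geq -\tfrac12$ since $|1+z|^2 - |z|^2 = 1 + 2\Re(z) \geq 0$. Consequently, for any input $v = (v^1, \dots, v^d)$ with $v^i \in A^{b_i}$, each bracket satisfies $\left|1 - \prod_{j=1}^{b_i} \mu(v^i_j)\right| \leq 2$, and therefore
\[
    |F_\lambda(v) - F_{\lambda_0}(v)| = |\lambda - \lambda_0| \cdot \left|\prod_{i=1}^d\left(1 - \prod_{j=1}^{b_i} \mu(v^i_j)\right)\right| \leq |\lambda - \lambda_0| \cdot 2^d.
\]
Crucially, this bound is uniform over all choices of $b_1, \dots, b_d$ and $v^i_j$, which is what makes the argument go through despite the input dimension being unbounded.

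Next I would argue that $\tilde A$ may be taken to be compact. The same estimate above with $\lambda = \lambda_0$ shows $|F_{\lambda_0}(v)| \leq |\lambda_0| \cdot 2^d$ for every such input, so the image of $F_{\lambda_0}$ is contained in the closed ball $\B{|\lambda_0| \cdot 2^d}$. Replacing $\tilde A$ by $\tilde A \cap \B{|\lambda_0|\cdot 2^d}$ yields a closed bounded, hence compact, subset of $\Int(A)$ that still contains the image of $F_{\lambda_0}$. Since $\tilde A$ is now compact and disjoint from the closed set $\mathbb{C} \setminus \Int(A)$, the distance $\delta := d(\tilde A, \mathbb{C} \setminus \Int(A))$ is strictly positive.

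Finally I would set $\tilde A' := \{z \in \mathbb{C} : d(z, \tilde A) \leq \delta/2\}$. This is closed, and any $z \in \tilde A'$ lies at distance at most $\delta/2$ from $\tilde A$, hence (by the triangle inequality and the definition of $\delta$) cannot lie in $\mathbb{C}\setminus \Int(A)$, so $\tilde A' \subseteq \Int(A)$. Taking $U := \{\lambda \in \mathbb{C} : |\lambda - \lambda_0| < \delta / 2^{d+1}\}$, the perturbation estimate yields $|F_\lambda(v) - F_{\lambda_0}(v)| < \delta/2$ for every admissible input $v$, and combined with $F_{\lambda_0}(v) \in \tilde A$ this gives $F_\lambda(v) \in \tilde A' \subset \Int(A)$, witnessing strict forward invariance of $A$ under $F_\lambda$.

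I do not foresee a serious obstacle. The only conceptual subtlety is the reduction to compact $\tilde A$, which is needed so that the gap $\delta$ between the image and $\partial A$ is uniform; without it, $\tilde A$ and $\partial A$ could recede to infinity together and leave no room to absorb the perturbation. Everything else is a straightforward use of the $|\mu| \leq 1$ bound on the half-plane $\{\Re \geq -\tfrac12\}$.
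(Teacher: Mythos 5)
Your proof is correct, and it rests on the same key observation as the paper's: since $A \subseteq \{\Re(z) \geq -\tfrac12\}$ and $\mu$ maps that half-plane into $\overline{\mathbb{D}}$, the image of $F_{\lambda_0}$ on admissible inputs is bounded (by $|\lambda_0| 2^d$), so $\tilde A$ can be taken compact, which is what makes a uniform perturbation argument possible. The only difference is in how the perturbation is handled afterwards. The paper exploits the multiplicative structure $F_\lambda = (\lambda/\lambda_0) F_{\lambda_0}$, so the image under $F_\lambda$ is exactly $(\lambda/\lambda_0)\tilde A$, and compactness of $\tilde A$ gives a neighborhood $V$ of $1$ with $t\tilde A \subset \Int(A)$ for $t \in V$. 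You instead use an additive, uniform-in-$v$ bound $|F_\lambda(v) - F_{\lambda_0}(v)| \leq |\lambda-\lambda_0| 2^d$ and then fatten $\tilde A$ by $\delta/2$ where $\delta = d(\tilde A, \mathbb{C}\setminus\Int(A))>0$. Your route is marginally more general (it does not rely on linearity of $F_\lambda$ in $\lambda$, only on a uniform modulus of continuity in $\lambda$), while the paper's is slightly slicker for this specific family; both correctly pin down the one genuinely non-routine point, namely that $\tilde A$ must be reduced to a compact set before any uniform gap to $\partial A$ exists.
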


\begin{proof}
Observe that $\mu(\{z \in \mathbb{C}: \Re(z) \geq -\frac{1}{2}\})$ is equal to the closed unit disk $\overline{\mathbb{D}}$. Therefore, if we let $g(w) = \lambda_0 \cdot (1-w)^d$, we see that 
\[
    F_{\lambda_0}(v^1, \dots, v^d) \in g(\overline{\mathbb{D}})
\]
for every choice $v^i \in A^{b_i}$. This is a bounded region, which implies that the region $\tilde{A}$, described in equation (\ref{eq: region tilde A}), can be chosen to be bounded. It follows that there is an open neighborhood $V$ of $1$ such that $t \cdot \tilde{A}$ is strictly contained in $A$ for every $t \in V$. Thus, if we let $U = \lambda_0 \cdot V$, we see that for every $\lambda \in U$
\[
    F_{\lambda_0}(v^1, \dots, v^d) = \frac{\lambda}{\lambda_0}F_{\lambda}(v^1, \dots, v^d) \in \frac{\lambda}{\lambda_0} \tilde{A},
\]
which is a strict subset of $A$ because $\frac{\lambda}{\lambda_0} \in V$.

\end{proof}

We say that a set $A \subseteq \mathbb{C} \setminus \mathbb{R}_{<0}$ is \emph{log-convex} if $\log(A \setminus \{0\})$ is convex, where $\log$ is taken to be the principle branch of the logarithm defined on $\mathbb{C} \setminus \mathbb{R}_{\leq 0}$.

A set $B \subseteq \mathbb{C}$ is called a \emph{multiplicative semigroup} if for every $w_1,w_2 \in B$ also $w_1 \cdot w_2 \in B$.

\begin{lemma}
\label{lem: properties of A}
Let $\lambda_0 \in \mathbb{C}$ and suppose a closed set $A\subseteq \mathbb{C} \setminus \mathbb{R}_{\leq -1}$ satisfies the following properties:
\begin{itemize}
    \item
    $1 + A$ is $\log$-convex;
    \item
    $\mu(A)$ is a multiplicative semigroup;
    \item
    $A$ is strictly forward invariant for the univariate map $f_{\lambda_0}$
\end{itemize}
Then $A$ is strictly forward invariant for $F_{\lambda_0}$.
\end{lemma}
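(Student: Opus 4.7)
The plan is to reduce $F_{\lambda_0}$ evaluated on any $(v^1, \dots, v^d) \in A^{b_1} \times \cdots \times A^{b_d}$ to $f_{\lambda_0}(u)$ for a single $u \in A$, at which point the univariate forward-invariance hypothesis finishes the job. The two algebraic properties of $A$ correspond precisely to the two nested products appearing in the definition of $F_{\lambda_0}$: the semigroup hypothesis will collapse each inner product, and log-convexity will collapse the outer product.

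First, I would handle each inner product $\prod_{j=1}^{b_i}\frac{v^i_j}{1+v^i_j} = \prod_{j=1}^{b_i}\mu(v^i_j)$. Each factor lies in $\mu(A)$, so by iterating the multiplicative semigroup hypothesis the full product lies in $\mu(A)$ as well. Since $\mu$ is an injective M\"obius transformation on $\mathbb{C}\setminus\{-1\}$, there is a unique $u_i \in A$ with $\mu(u_i) = \prod_{j=1}^{b_i}\mu(v^i_j)$. Using the identity $1-\mu(w) = 1/(1+w)$, this rewrites
\[
    F_{\lambda_0}(v^1, \dots, v^d) = \lambda_0 \prod_{i=1}^d(1-\mu(u_i)) = \frac{\lambda_0}{\prod_{i=1}^d (1+u_i)}.
\]

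Next, I would collapse the outer product via log-convexity. Because $A \subseteq \mathbb{C}\setminus\mathbb{R}_{\leq -1}$, we have $1+A \subseteq \mathbb{C}\setminus\mathbb{R}_{\leq 0}$, so the principal branch of $\log$ is well defined on $1+A$. Log-convexity of $1+A$ implies that the convex combination $L := \frac{1}{d}\sum_{i=1}^d \log(1+u_i)$ lies in $\log(1+A)$, yielding $u \in A$ with $\log(1+u) = L$. Raising $e^{L}$ to the $d$-th power gives $(1+u)^d = \prod_{i=1}^d (1+u_i)$ exactly, and substituting back produces
\[
    F_{\lambda_0}(v^1, \dots, v^d) = \frac{\lambda_0}{(1+u)^d} = \lambda_0(1-\mu(u))^d = f_{\lambda_0}(u).
\]

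Finally, univariate strict forward invariance guarantees $f_{\lambda_0}(u) \in \Int(A)$. To extract the closed witness $\tilde{A} \subset \Int(A)$ demanded by the multivariate definition, I would view $A$ as a closed (hence compact) subset of $\Chat$ and set $\tilde{A} := f_{\lambda_0}(A)$; this is compact, contained in $\Int(A)$ by hypothesis, and contains every possible value of $F_{\lambda_0}(v^1, \dots, v^d)$. I do not expect any serious obstacle: the only step requiring care is the branch tracking in the log-convexity argument, but since we stay in the slit plane $\mathbb{C}\setminus\mathbb{R}_{\leq 0}$ throughout, the principal logarithm is single-valued and $\exp$ turns sums into products with no monodromy. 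The substantive content of the proof is simply the observation that the two hypotheses on $A$ exactly match the two product layers in the definition of $F_{\lambda_0}$.
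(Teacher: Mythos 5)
Your argument is correct and is essentially the paper's own proof: you collapse each inner product into $\mu(A)$ via the semigroup hypothesis, collapse the outer product via log-convexity of $1+A$ (exactly the midpoint-of-logarithms argument the paper leaves implicit) to write $F_{\lambda_0}(v^1,\dots,v^d)=f_{\lambda_0}(u)$ for some $u\in A$, and conclude with univariate strict forward invariance. The only cosmetic caveat is your final packaging: a closed unbounded $A\subseteq\mathbb{C}$ need not be compact as a subset of $\Chat$ (and the regions used later are unbounded cone intersections), so one should take $\tilde A$ to be the closure of $f_{\lambda_0}(A)$ rather than $f_{\lambda_0}(A)$ itself --- a point the paper's own proof glosses over in the same way.
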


\begin{proof}
Let $v^i\in A^{b_i}$ for $i=1,\dots,d$. By the second property we know that there exists $w_i\in A$ for $i=1,\dots,d$ such that
\[
    \prod_{j=1}^{b_i}\frac{v^{i}_j}{1+v^{i}_j}=\frac{w_i}{1+w_i}.
\]
By the first property we know that there exists a $\tilde{w}\in A$ such that
\[
    (1+\tilde{w})^d=\prod_{i=1}^d (1+w_i).
\]
Therefore
\[
    F_{\lambda_0}(v^1,\dots,v^d)=\lambda_0\prod_{i=1}^d \left[1-\prod_{j=1}^{b_i} \frac{v^i_j}{1+v^i_j}\right]=\lambda_0\prod_{i=1}^d\frac{1}{1+w_i}=\frac{\lambda_0}{(1+\tilde{w})^d}=f_{\lambda_0}(\tilde w).
\]
And thus any $F_{\lambda_0}(v^1,\dots,v^d)$ lies in $f_{\lambda_0}(A)$, which is a strict subset of $A$ by the first property.
\end{proof}

\begin{corollary}
\label{cor: properties imply zerofree}
Suppose $\lambda_0 \in \mathbb{C}$ and $A\subseteq \{z \in \mathbb{C}: \Re(z) \geq -\frac{1}{2}\}$ satisfy the properties of Lemma~\ref{lem: properties of A} and $0 \in A$. Then there is an open neighborhood $U$ of $\lambda_0$ such that for any hypergraph $\mathcal{H}$ of maximum degree $d+1$ and $\lambda \in U$ we have $Z(\mathcal{H};\lambda) \neq 0$.
\end{corollary}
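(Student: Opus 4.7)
The plan is to chain together the preceding lemmas with a tree-recursion induction. First, by Lemma~\ref{lem: properties of A}, the three stated properties of $A$ imply that $A$ is strictly forward invariant for $F_{\lambda_0}$. Applying the perturbation lemma immediately preceding Lemma~\ref{lem: properties of A}, I obtain an open neighborhood $U$ of $\lambda_0$ such that $A$ is strictly forward invariant for $F_\lambda$ for every $\lambda \in U$. Since $0 \in A$ and $f_{\lambda_0}(0) = \lambda_0$, strict forward invariance places $\lambda_0$ in the interior of $A$, so after shrinking I may assume $U \subseteq A$; moreover $-1 \notin A$ since $A \subseteq \{z : \Re(z) \geq -\tfrac{1}{2}\}$.

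Next, fix $\lambda \in U$ and a hypergraph $\mathcal{H}$ of maximum degree at most $d+1$. By Lemma~\ref{lem:ratio_vs_zero}, applied with the singleton $\{\lambda\} \subseteq \mathbb{C}\setminus\{-1\}$, it suffices to show that no rooted linear hypertree $(\mathcal{T}, r)$ of maximum degree at most $d+1$ with $r$ of degree $1$ satisfies $R_r(\mathcal{T}; \lambda) = -1$. I would in fact establish the stronger claim that every rooted linear hypertree $(\mathcal{T}', r')$ of maximum degree at most $d+1$ with $\deg_{\mathcal{T}'}(r') \leq d$ satisfies $R_{r'}(\mathcal{T}'; \lambda) \in A$; since $-1 \notin A$, this covers the degree-$1$ root case of interest.

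The claim is proved by induction on $|V(\mathcal{T}')|$. The base case is a single vertex, whose ratio is $\lambda \in U \subseteq A$. For the inductive step, the recursion of Lemma~\ref{lem:tree_recursion} gives
\[
R_{r'}(\mathcal{T}'; \lambda) = \lambda \prod_{i=1}^{d'} \left(1 - \prod_{j=1}^{b_i} \frac{R_{v_j^{(i)}}(\mathcal{T}_{i,j}; \lambda)}{1 + R_{v_j^{(i)}}(\mathcal{T}_{i,j}; \lambda)}\right),
\]
where $d' = \deg_{\mathcal{T}'}(r') \leq d$ and, inside each subtree $\mathcal{T}_{i,j}$, the root $v_j^{(i)}$ has degree at most $d$ because its edge toward $r'$ has been removed. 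The inductive hypothesis puts each $R_{v_j^{(i)}}(\mathcal{T}_{i,j}; \lambda)$ in $A$. To match the $d$-ary shape of $F_\lambda$, I pad with $d-d'$ extra single-element tuples $v^i = (0) \in A$, each contributing a factor $1 - 0 = 1$; this padding is valid precisely because $0 \in A$. Strict forward invariance of $A$ under $F_\lambda$ now gives $R_{r'}(\mathcal{T}'; \lambda) \in A$, closing the induction.

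The argument is essentially bookkeeping on top of the earlier lemmas, so I do not expect a deep obstacle. The two points requiring some care are the observation that $\lambda_0$ lies in the interior of $A$ (so that $U$ can be chosen inside $A$, which is needed for the base case), and the zero-padding step, which uses the hypothesis $0 \in A$ in an essential way to reduce vertices of smaller than maximal degree to the fixed $d$-ary framework of $F_\lambda$.
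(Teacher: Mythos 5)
Your proof is correct and follows essentially the same route as the paper: Lemma~\ref{lem: properties of A} together with the perturbation lemma preceding it yields strict forward invariance of $A$ for $F_\lambda$ on a neighborhood $U$ of $\lambda_0$, and then the tree-recursion induction combined with Lemma~\ref{lem:ratio_vs_zero} excludes ratio $-1$ and hence zeros. The only difference is that you make explicit the bookkeeping the paper leaves implicit, namely padding roots of down-degree less than $d$ with the value $0\in A$ and securing $\lambda\in A$ for the single-vertex base case.
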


\begin{proof}
By Lemma~\ref{cor: properties imply zerofree} and Lemma~\ref{lem: properties of A} we know that there is an open neighborhood $U$ of $\lambda_0$ such that $A$ is strictly forward invariant for $F_\lambda$ for any $\lambda\in U$. 
Now let us fix a $\lambda\in U$. 
Let $\mathcal{T}$ be an arbitrary hypertree of maximum degree at most $d+1$ and let $r$ be a leaf. Then by the forward invariance of $A$ and the fact that $-1\notin A$ we see by Lemma~\ref{lem:tree_recursion} that $R_r(\mathcal{T},\lambda)\neq -1$. Thus by Lemma~\ref{lem:ratio_vs_zero} we see that there is no hypergraph of degree at most $d+1$ such that $Z(\mathcal{H},\lambda)=0$.
\end{proof}

\section{The maximal zero-free region for hypergraphs around the positive real axis}\label{sec: Sokal}
This section is dedicated to proving Theorem~\ref{thm: hypergraph Sokal region}. Again we will always fix $d \in \mathbb{Z}_{\geq 2}$ and let $f_{\lambda}(z) = \lambda/(1+z)^d$. We let
\[
    \lambda_c := \lambda_c(d+1) = \frac{d^d}{(d-1)^{d+1}}.
\]
We can relate $\lambda_c$ to the dynamical behaviour of $f_\lambda$. The following lemma can also be found in \cite{PetersRegtsSokal}.
\begin{lemma}
    \label{lem: lambdacritical}
    For any $\lambda > 0$ the map $f_{\lambda}$ has a unique positive fixed point $p_\lambda$. If $0 < \lambda < \lambda_c$ then this fixed point is attracting, i.e. $|f_{\lambda}'(p_\lambda)| < 1$.
\end{lemma}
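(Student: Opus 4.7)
The plan is to reduce everything to the simple scalar equation for positive fixed points and then relate the magnitude of the derivative at the fixed point to a monotone quantity in $\lambda$.

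First I would observe that a positive real number $p$ is a fixed point of $f_\lambda$ if and only if $p(1+p)^d = \lambda$. Setting $g(p) = p(1+p)^d$, the function $g$ is a polynomial that is strictly increasing on $[0,\infty)$ (both factors are positive and increasing on $(0,\infty)$), with $g(0)=0$ and $g(p)\to\infty$ as $p\to\infty$. Hence for each $\lambda>0$ there is a unique $p_\lambda\in(0,\infty)$ with $g(p_\lambda)=\lambda$, and the map $\lambda\mapsto p_\lambda$ is a strictly increasing bijection $(0,\infty)\to(0,\infty)$.

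Next I would compute the multiplier. We have $f_\lambda'(z) = -d\lambda/(1+z)^{d+1}$, and substituting $\lambda = p_\lambda(1+p_\lambda)^d$ at the fixed point gives
\[
    f_\lambda'(p_\lambda) = -\frac{d\,p_\lambda}{1+p_\lambda}.
\]
In particular $|f_\lambda'(p_\lambda)| = dp_\lambda/(1+p_\lambda)$, which is a strictly increasing function of $p_\lambda$, and hence of $\lambda$. Thus $|f_\lambda'(p_\lambda)|<1$ is equivalent to $p_\lambda < 1/(d-1)$, which by monotonicity of $g$ is equivalent to $\lambda < g\bigl(1/(d-1)\bigr)$.

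Finally I would verify the numerical value. Plugging $p = 1/(d-1)$ into $g$ gives
\[
    g\!\left(\frac{1}{d-1}\right) = \frac{1}{d-1}\left(\frac{d}{d-1}\right)^{d} = \frac{d^d}{(d-1)^{d+1}} = \lambda_c,
\]
which completes the proof. There is no real obstacle here; the only thing to be slightly careful about is that the inequality $|f_\lambda'(p_\lambda)|<1$ is strict on the whole interval $(0,\lambda_c)$, which follows from the strict monotonicity of $\lambda\mapsto dp_\lambda/(1+p_\lambda)$.
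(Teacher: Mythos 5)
Your argument is correct and is essentially the same as the paper's: reduce to the equation $\lambda = p(1+p)^d$, use monotonicity of $p \mapsto p(1+p)^d$ for existence and uniqueness, compute the multiplier $-dp_\lambda/(1+p_\lambda)$, and translate $|f_\lambda'(p_\lambda)|<1$ into $p_\lambda < 1/(d-1)$, i.e.\ $\lambda < \lambda_c$. You merely fill in a few details (the explicit evaluation $g(1/(d-1)) = d^d/(d-1)^{d+1}$ and the limiting behaviour of $g$) that the paper leaves implicit.
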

\begin{proof}
Suppose $f_{\lambda}$ has a fixed point $p \in \mathbb{R}_{\geq 0}$. Then it follows that $\lambda = \lambda_p = p(1+p)^d$. The map $p \mapsto \lambda_p$ is increasing on $\mathbb{R}_{\geq 0}$ and therefore invertible with inverse $p_\lambda$. 

We calculate that $f_{\lambda_p}'(p_\lambda) = -dp_\lambda/(1+p_\lambda)$. It therefore follows that the fixed point $p_\lambda$ of $f_{\lambda_p}$ is attracting if $0 < p_\lambda < 1/(d-1)$, i.e. if $0 < \lambda_p < \lambda_c$. 
\end{proof}

To prove Theorem~\ref{thm: hypergraph Sokal region} we will 
define a family of regions $A(x,x_0,\epsilon)$ depending on three parameters and we will show that for $0 < \lambda < \lambda_c$ the parameters can be chosen in such a way that the region satisfies the conditions of Lemma~\ref{lem: properties of A}.

\subsection{Defining the relevant regions}

For any $x \in \mathbb{R}$ and $\epsilon \in (0,\pi)$ we define $C(x,\epsilon)$ as the closed cone with vertex $x$ that is symmetric around the real axis and has internal angle $2\epsilon$, i.e.
\[
    C(x,\epsilon) = \{z \in \mathbb{C}: |\textrm{Arg}(z-x)| \leq \epsilon\} \cup \{x\}.
\]
Let $x,x_0 \in \mathbb{R}$ with $-1 < x < x_0$ and let $\epsilon \in (0,\pi/2)$. Let $\tilde{\epsilon}$ be such that the intersections of the boundaries of $C(x,\epsilon)$ and $C(-1,\tilde{\epsilon})$ have real part $x_0$, i.e $\tilde{\epsilon} = \tan^{-1}(\frac{x_0-x}{x_0+1}\tan(\epsilon))$.
We define 
\[
    A(x,x_0,\epsilon) = C(x,\epsilon) \cap C(-1,\tilde{\epsilon}).
\]

For $y < 1$ let $a_\epsilon$ be the circular arc between $y$ and $1$ with internal angle $\epsilon$. And let $D(y,\epsilon)$ be the closed region in between $a_\epsilon$ and $\overline{a_\epsilon}$. For $y_0 \in (y,1)$ let $\tilde{\eta}$ be such that the real part of the intersections of the boundary between the cone $-C(-1,\tilde{\eta})$ and $D(y,\epsilon)$ is $y_0$.
We define 
\[
B(y,y_0,\epsilon) = D(y,\epsilon) \cap (-C(-1,\tilde{\eta})).
\]
See figure \ref{fig: Regions} for an example.

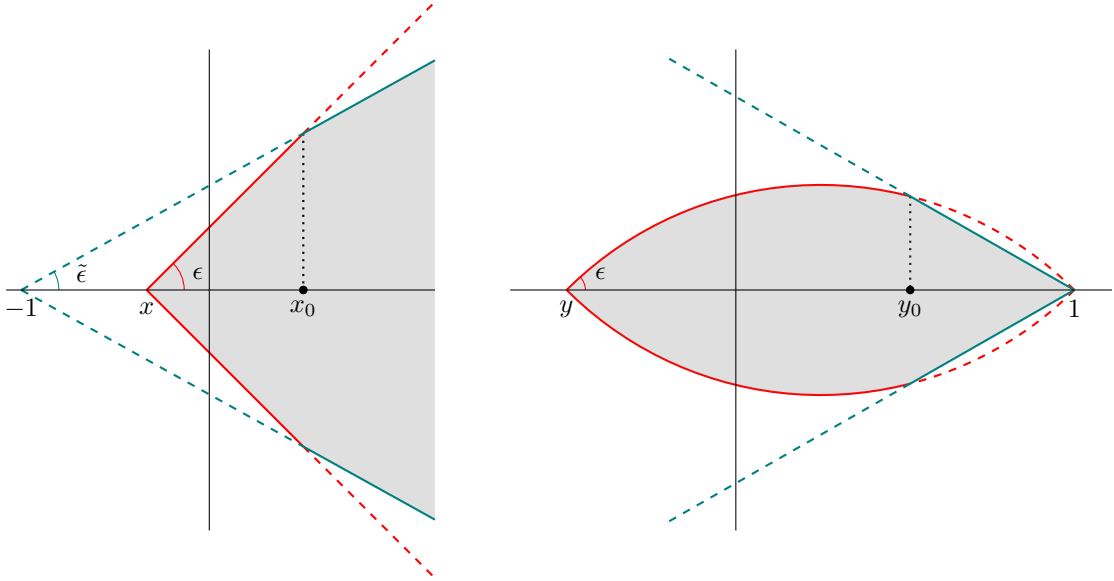
\begin{figure}[ht]
\centering
\label{fig: Regions}
\begin{tikzpicture}

\fill[fill=gray, fill opacity = 0.25] (-0.833333,0.) -- (1.25,2.08333) -- (3.,3.05556) -- (3.,-3.05556) -- (1.25,-2.08333) -- (-0.833333,0.);

\draw[thick,black,dotted] (1.25,2.08333) -- (1.25,0);

\draw[thick,red] (-0.833333,0.) -- (1.25,2.08333);
\draw[thick,red] (-0.833333,0.) -- (1.25,-2.08333);
\draw[thick,red,dashed] (1.25,2.08333) -- (3.,3.83333);
\draw[thick,red,dashed] (1.25,-2.08333) -- (3.,-3.83333);

\draw[thick,teal,dashed] (-2.5,0.) -- (1.25,2.08333);
\draw[thick,teal,dashed] (-2.5,0.) -- (1.25,-2.08333);
\draw[thick,teal] (1.25,2.08333) -- (3.,3.05556);
\draw[thick,teal] (1.25,-2.08333) -- (3.,-3.05556);

\node[black] at (-0.15,0.22){$\epsilon$};
\draw[red,-] (-0.833333,0.)+(0:0.5) arc (0:45:0.5);
\node[black] at (-1.7,0.22){$\tilde\epsilon$};
\draw[teal,-] (-2.5,0.)+(0:0.5) arc (0:29.0546:0.5);
\node[black] at (-2.5,-0.25){$-1$};
\node[black] at (-0.833333,-0.25){$x$};
\draw[fill=black] (1.25,0) circle (0.05);
\node[black] at (1.25,-0.25){$x_0$};

\draw[black] (-2.7,0.) -- (3,0.);
\draw[black] (0.,-3.2) -- (0.,3.2);

\draw [red,thick,domain=75.5225:135.] plot ({8.125+4.77297*cos(\x)}, {-3.375+4.77297*sin(\x)});
\draw [red,dashed,thick,domain=45.:75.5225] plot ({8.125+4.77297*cos(\x)}, {-3.375+4.77297*sin(\x)});
\draw [red,thick,domain=-75.5225:-135.] plot ({8.125+4.77297*cos(\x)}, {--3.375+4.77297*sin(\x)});
\draw [red,dashed,thick,domain=-45.:-75.5225] plot ({8.125+4.77297*cos(\x)}, {--3.375+4.77297*sin(\x)});

\draw[dotted,thick,black] (9.31824,1.24641) -- (9.31824,0);

\fill[fill=gray, fill opacity = 0.25] (4.75,0.) -- (5.11789,0.33155) -- (5.51816,0.6232) -- (5.94649,0.8718) -- (6.39828,1.07468) -- (6.86866,1.22965) -- (7.35256,1.33505) -- (7.84478,1.38974) -- (8.34002,1.39312) -- (8.83294,1.34518) -- (9.31824,1.24641) -- (11.5,0) -- (9.31824,-1.24641) -- (8.83294,-1.34518) -- (8.34002,-1.39312) -- (7.84478,-1.38974) -- (7.35256,-1.33505) -- (6.86866,-1.22965) -- (6.39828,-1.07468) -- (5.94649,-0.8718) -- (5.51816,-0.6232) -- (5.11789,-0.33155) -- (4.75,0.);

\draw[thick,teal] (11.5,0) -- (9.31824,1.24641);
\draw[thick,dashed,teal] (9.31824,1.24641) -- (6.0456, 3.11603);
\draw[thick,teal] (11.5,0) -- (9.31824,-1.24641);
\draw[thick,dashed,teal] (9.31824,-1.24641) -- (6.0456, -3.11603);

\node[black] at (5.2,0.22){$\epsilon$};
\draw[red,-] (4.75,0.)+(0:0.25) arc (0:45:0.25);
\node[black] at (11.5,-0.25){$1$};
\node[black] at (4.75,-0.25){$y$};
\draw[fill=black] (9.31824,0) circle (0.05);
\node[black] at (9.31824,-0.25){$y_0$};


\draw[black] (4,0.) -- (12,0.);
\draw[black] (7.,-3.2) -- (7.,3.2);
\end{tikzpicture}
\caption{Example of the regions $A(x,x_0,\epsilon)$ and $B(y,y_0,\epsilon)$}
\end{figure}

Recall that $\mu(z) = z/(1+z)$.

\begin{lemma}
\label{lem: Region Transform}
For $-1 < x < x_0$ we have
\[
    \mu(A(x,x_0,\epsilon)) = B(\mu(x),\mu(x_0) + o(1),\epsilon) \setminus \{1\},
\]
where $o(1)$ denotes some function that converges to $0$ as $\epsilon$ converges to $0$.
\end{lemma}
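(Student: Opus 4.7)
The plan is to exploit two properties of $\mu$: it is a Möbius transformation (so it is conformal and sends generalized circles to generalized circles) and it preserves the real axis. Since $A(x,x_0,\epsilon) = C(x,\epsilon) \cap C(-1,\tilde\epsilon)$, I would first determine $\mu(C(x,\epsilon))$ and $\mu(C(-1,\tilde\epsilon))$ separately and then intersect.

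For the first, each boundary ray of $C(x,\epsilon)$ lies on a straight line through $x$ meeting the real axis at angle $\pm\epsilon$. Its image under $\mu$ lies on a generalized circle through $\mu(x)$ and $\mu(\infty)=1$ and, by conformality, meets the real axis at $\mu(x)$ at the same angle $\pm\epsilon$. These images are therefore exactly the arcs $a_\epsilon$ and $\overline{a_\epsilon}$ appearing in the definition of $D(\mu(x),\epsilon)$, so $\mu(C(x,\epsilon)\cap\mathbb{C}) = D(\mu(x),\epsilon)\setminus\{1\}$. For the second cone, I would use the direct computation
\[
    \mu(-1+re^{i\theta}) = 1 - \frac{e^{-i\theta}}{r},
\]
so that as $r$ ranges over $(0,\infty)$, the upper boundary ray ($\theta=\tilde\epsilon$) traces the open ray from $1$ in direction $e^{i(\pi-\tilde\epsilon)}$ and symmetrically for the lower ray. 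These two rays bound exactly the cone with vertex $1$ opening leftward with half-angle $\tilde\epsilon$, which equals $-C(-1,\tilde\epsilon)$. The vertex $-1$ is sent to $\infty\notin\mathbb{C}$, and the direction at infinity inside the cone is sent to $1$, giving $\mu(C(-1,\tilde\epsilon)\setminus\{-1\}) = -C(-1,\tilde\epsilon)\setminus\{1\}$.

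Combining the two, $\mu(A(x,x_0,\epsilon)) = D(\mu(x),\epsilon) \cap (-C(-1,\tilde\epsilon)) \setminus \{1\}$. By the very definition of $B$, this intersection equals $B(\mu(x),y_0,\epsilon)$, where $y_0$ is the real part of the point at which the boundary of $D(\mu(x),\epsilon)$ meets the boundary of $-C(-1,\tilde\epsilon)$. Since $\mu$ sends boundary to boundary and intersection to intersection, that meeting point is the image under $\mu$ of the intersection of the boundaries of $A(x,x_0,\epsilon)$, namely $\mu(x_0 + i(x_0-x)\tan\epsilon)$. A one-variable Taylor expansion of $\mu$ at $x_0$, using that $\mu'(x_0)\in\mathbb{R}$ so that $\mu'(x_0)\cdot i(x_0-x)\tan\epsilon$ is purely imaginary, yields
\[
    \Re\,\mu\bigl(x_0 + i(x_0-x)\tan\epsilon\bigr) = \mu(x_0) + O(\epsilon^2),
\]
which is $\mu(x_0) + o(1)$ as $\epsilon\to 0$.

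The calculation is largely mechanical once the Möbius geometry is in place. The only subtle bookkeeping is at the point $1=\mu(\infty)$: it appears on the right-hand side only because $\infty$ is not a point of $A(x,x_0,\epsilon)\subseteq\mathbb{C}$, justifying the excised $\{1\}$. I do not anticipate any serious obstacle beyond this, but care is needed in verifying that the conformality argument at $\mu(x)$ really picks out the arc $a_\epsilon$ (rather than its reflection), which amounts to checking that $\mu$ is orientation-preserving on a neighbourhood of $x$, a consequence of $\mu'(x)>0$ for $x>-1$.
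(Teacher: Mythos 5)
Your proposal is correct and follows essentially the same route as the paper's proof: both use that $\mu$ is a M\"obius transformation sending generalized circles to generalized circles (with $\mu(-1)=\infty$, $\mu(\infty)=1$) to identify $\mu(C(x,\epsilon))$ with $D(\mu(x),\epsilon)\setminus\{1\}$ and $\mu(C(-1,\tilde\epsilon))$ with $-C(-1,\tilde\epsilon)\setminus\{1\}$, and then read off the second parameter of $B$ as $\Re\,\mu(I_\epsilon)$ for the boundary intersection point $I_\epsilon = x_0 + i(x_0-x)\tan\epsilon$. Your explicit computation for the second cone and the Taylor expansion giving $\mu(x_0)+O(\epsilon^2)$ are just slightly more detailed versions of the paper's conformality and continuity arguments, so there is nothing to add.
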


\begin{proof}
The map $\mu$ is a M\"obius transformation that sends $\mathbb{R} \cup \{\infty\}$ to itself with $\mu(-1) = \infty$ and $\mu(\infty) = 1$. A M\"obius transformation is a conformal map that sends generalized circles to generalized circles and therefore, for any $-1 < x$, we have that $\mu(C(x,\epsilon)) = D(\mu(x),\epsilon) \setminus \{1\}$. Moreover, it follows that $\mu(C(-1,\tilde{\epsilon})) = -C(-1,\tilde{\epsilon}) \setminus \{1\}$. If we let $I_\epsilon$ be the point on the boundary of $A(x,x_0,\epsilon)$ in the upper half-plane with real part $x_0$ we can thus conclude that 
\[
    \mu(A(x,x_0,\epsilon)) = B(\mu(x), \Re(\mu(I_\epsilon)),\epsilon) \setminus \{1\}.
\]
As $\epsilon \to 0$ the point $I_\epsilon$ converges to $x_0$ and thus, by continuity, $\mu(I_\epsilon)$ converges to $\mu(x_0)$.
\end{proof}

\subsection{Logarithmic convexity}
In this section we prove that $A(x,x_0,\epsilon)$ satisfies the first condition of Lemma~\ref{lem: properties of A}.

\begin{lemma}
\label{lem: log convexity}
    For any $x,x_0$ with $-1 < x < x_0$ and $\epsilon \in (0,\pi/2)$ the region $1 + A(x,x_0,\epsilon)$ is $\log$-convex.
\end{lemma}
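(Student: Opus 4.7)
The plan is to reduce log-convexity of $1 + A(x, x_0, \epsilon)$ to statements about two individual cones. First I would observe that translating by $+1$ shifts both vertices, giving
\[
    1 + A(x, x_0, \epsilon) = C(x+1, \epsilon) \cap C(0, \tilde{\epsilon}),
\]
where $x+1 > 0$ since $x > -1$. Because the principal branch of $\log$ is injective on $\mathbb{C} \setminus \mathbb{R}_{\leq 0}$, the log image of an intersection equals the intersection of the log images; and since intersections of convex sets are convex, it suffices to prove that each of the two cones is individually log-convex.

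The cone $C(0, \tilde{\epsilon})$ is log-convex essentially by definition: in polar coordinates it is $\{re^{i\theta} : r > 0,\ |\theta| \leq \tilde{\epsilon}\}$, so its image under $\log$ is the horizontal strip $\{u + iv : |v| \leq \tilde{\epsilon}\}$, which is convex. The substantive step is to show log-convexity of $C(c, \epsilon)$ for arbitrary $c > 0$ and $\epsilon \in (0, \pi/2)$. My plan is to parametrize the upper boundary of $\log C(c, \epsilon)$ by $\gamma_+(t) = u(t) + iv(t) = \log(c + te^{i\epsilon})$, $t \geq 0$, and carry out the short computation
\[
    \frac{dv}{du} = \frac{v'(t)}{u'(t)} = \frac{c \sin \epsilon}{c \cos \epsilon + t},
\]
which is strictly decreasing in $t$ while $u(t)$ is strictly increasing. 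This shows the upper boundary is the graph $v = v_+(u)$ of a strictly concave function on $[\log c, \infty)$ satisfying $v_+(\log c) = 0$ and $v_+(u) \to \epsilon$ as $u \to \infty$. By complex conjugation symmetry, the lower boundary is the graph of the convex function $-v_+$, and the two curves meet at the single point $\log c$.

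From here, convexity of $\log C(c, \epsilon) = \{u + iv : u \geq \log c,\ |v| \leq v_+(u)\}$ is a standard consequence: for a convex combination of two points in this set, linearity gives $u \geq \log c$, and the concavity inequality combined with $|\lambda v_1 + (1-\lambda) v_2| \leq \lambda|v_1| + (1-\lambda)|v_2|$ gives $|v| \leq v_+(u)$. Intersecting with the strip coming from $C(0, \tilde{\epsilon})$ preserves convexity, completing the proof. I expect the only nontrivial obstacle to be the verification that $v$ is concave in $u$ along the curved boundary, which reduces to the monotonicity of the single elementary expression displayed above; everything else is bookkeeping.
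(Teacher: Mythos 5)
Your argument is correct, but it establishes the key step by a genuinely different method than the paper. The reduction is shared: both you and the paper translate by $1$, use injectivity of the principal logarithm to reduce log-convexity of the intersection to log-convexity of individual cones with vertex $\geq 0$, and the vertex-$0$ cone becomes the horizontal strip $\{|\Im w|\leq\tilde\epsilon\}$. For a cone $C(c,\epsilon)$ with $c>0$, however, the paper decomposes it further into two closed half-planes and proves each half-plane log-convex via midpoint convexity: for $w_1,w_2$ in the log image, the Grace--Walsh--Szeg\H o theorem (Theorem~\ref{thm: GWZ}) applied to the symmetric multiaffine polynomial $z_1z_2$ produces $z$ in the half-plane with $e^{w_1}e^{w_2}=z^2$, and comparing imaginary parts forces $(w_1+w_2)/2=\log z$; closedness then upgrades this to convexity. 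You instead work with the cone directly and compute that along the image of its upper boundary ray $dv/du=\frac{c\sin\epsilon}{c\cos\epsilon+t}$ is strictly decreasing while $u$ is strictly increasing (your formula is correct: $\gamma_+'(t)=e^{i\epsilon}/(c+te^{i\epsilon})$ has real part $(c\cos\epsilon+t)/|c+te^{i\epsilon}|^{2}$ and imaginary part $c\sin\epsilon/|c+te^{i\epsilon}|^{2}$), so the upper boundary of $\log C(c,\epsilon)$ is a concave graph and convexity of the region between the two graphs is elementary. The paper's route buys a computation-free argument that reuses a theorem it needs elsewhere anyway and yields the slightly more general statement that such half-planes are log-convex; your route is more elementary and self-contained and gives an explicit description of $\log C(c,\epsilon)$. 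The one step you should spell out is the identification $\log C(c,\epsilon)=\{u+iv:\ u\geq\log c,\ |v|\leq v_+(u)\}$, i.e.\ that the fibre of the log image over each vertical line is the full interval between the two boundary graphs; this is routine rather than a gap (for fixed $\rho>c$ the map $\theta\mapsto\arg(\rho e^{i\theta}-c)$ is strictly increasing on $[0,\pi]$, so the circle $|z|=\rho$ meets the cone in a single arc symmetric about the real axis, and that arc corresponds exactly to $|v|\leq v_+(\log\rho)$).
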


\begin{proof}
Because $A(x,x_0,\epsilon)$ is the intersection of two cones it is sufficient to show that any cone of the form $C(y,\epsilon)$ with $y \geq 0$ and $\epsilon \in (0, \pi/2)$ is $\log$-convex. Furthermore, such a cone is the intersection of the two closed half-planes $y - e^{i\epsilon} \mathbb{H}_{\geq 0}$ and $y + e^{-i\epsilon} \mathbb{H}$, where $\mathbb{H}$ denotes the closed upper half-plane. It is thus sufficient to show that the half-plane $H(y,\epsilon) := y + e^{-i\epsilon} \mathbb{H}$ is log-convex.

If $y = 0$ then 
\[
\log(H(y,\epsilon) \setminus \{0\}) = \log(e^{-i\epsilon} \mathbb{H}_{\geq 0}   \setminus \{0\}) = \{w \in \mathbb{C}: -\epsilon \leq \Im(w) \leq \pi - \epsilon\},
\]
which is convex. 

Now suppose $y > 0$ and take $w_1, w_2 \in \log(H(y,\epsilon))$. By Theorem~\ref{thm: GWZ} there exists a $z \in H(y,\epsilon)$ such that $e^{w_1} e^{w_2} = z^2$. It follows that $(w_1 + w_2)/2 = \log(z) + i k \pi$ for some $k \in \mathbb{Z}$. Note that for any $w \in \log(H(y,\epsilon))$ we have $-\epsilon < \Im(w) < \pi - \epsilon$ and thus both $- \epsilon < \Im((w_1 + w_2)/2) < \pi - \epsilon$ and $-\epsilon < \Im(\log(z)) < \pi - \epsilon$. It follows that $k=0$, i.e. $(w_1 + w_2)/2 = \log(z)$, which shows that $\log(H(y,\epsilon))$ is midpoint convex. Because $\log(H(y,\epsilon))$ is also closed we can conclude that $H(y,\epsilon)$ is convex.
\end{proof}

\subsection{Multiplicative invariance}
In this section we prove that for the right choice of parameters the set $A(x,x_0,\epsilon)$ satisfies the second condition of Lemma~\ref{lem: properties of A}.

\begin{lemma}
    \label{lem: B mult semigroup}
    Let $y \in (0,1)$, and let $r$ be a real-valued function such that $\lim_{\epsilon \to 0} r(\epsilon) = y_0$ with $y_0 \in (y^2,\sqrt{y})$. Then for $\epsilon$ sufficiently small $B(-y,r(\epsilon),\epsilon)$ is a multiplicative semigroup.
\end{lemma}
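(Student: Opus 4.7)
The plan is to verify the semigroup property by combining a limit/compactness argument with direct estimates on the two angle conditions defining $B$. As $\epsilon \to 0$, $B(-y,r(\epsilon),\epsilon)$ shrinks to a subset of the real axis, and on this real limit the multiplicative closure is strict; the strict slack is what absorbs the $O(\epsilon)$ perturbations coming from the 2D shape of $B_\epsilon$.

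First I would record the two-condition description
\[
    B(-y,r(\epsilon),\epsilon) = \left\{z: \left|\arg \tfrac{1-z}{z+y}\right| \leq \epsilon \right\} \cap \left\{z : |\arg(1-z)| \leq \tilde\eta(\epsilon)\right\},
\]
where the cone angle $\tilde\eta(\epsilon)$ is fixed by the requirement that the two boundary curves meet at real part $r(\epsilon)$. A direct computation of the two boundary curves gives $\tilde\eta(\epsilon) = (1 + o(1))\,\epsilon\, (y_0 + y)/(1+y)$, so both defining angles are $O(\epsilon)$ and at every real $p$ the width of $B_\epsilon$ perpendicular to the real axis is $O(\epsilon)$, with the binding constraint being the arc for $p \leq y_0$ and the cone for $p \geq y_0$. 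On the real axis, the product of two elements of $[-y, y_0]$ lies in $[-yy_0, \max(y^2,y_0^2)]$, and the inequalities $-yy_0 > -y$ (from $y_0 < 1$), $y^2 < y_0$ (from $y_0 > y^2$), and $y_0^2 < y_0$ (from $y_0 < 1$) give strict interior containment in $(-y, y_0)$. Moreover $y_0^2 < y$ (from $y_0 < \sqrt y$) places the extremal product strictly inside the arc-bounded part, which will matter for the width comparison below.

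For $z_1, z_2 \in B_\epsilon$ with $z_i = a_i + ib_i$ I would then verify the two angle conditions on $z_1 z_2$ separately. The cone condition is handled via the algebraic identity $1 - z_1 z_2 = (1-z_1) + z_1(1-z_2)$: combined with the individual bound $|b_i| \leq (1-a_i)\tan\tilde\eta$, the required estimate $|\Im(1-z_1 z_2)| \leq \tan\tilde\eta \cdot \Re(1-z_1 z_2)$ reduces to the elementary inequality $|a_1|(1-a_2) + |a_2|(1-a_1) \leq 1 - a_1 a_2$, valid for $a_i \in [-y,1]$ (with strict slack that absorbs the $O(\epsilon^2)$ cross terms). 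The arc condition does not admit an equally clean identity, so one expands the real and imaginary parts of $(1-z_1 z_2)/(z_1 z_2 + y)$ directly; using the tighter bound $|b_i| \leq \epsilon(1-a_i)(a_i+y)/(1+y)$ supplied by the arc constraint itself, the required estimate reduces to a polynomial inequality in $a_1, a_2 \in [-y,y_0]$.

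The main obstacle is verifying this last polynomial inequality with strict inequality, uniformly on the relevant configuration space, so that for $\epsilon$ small enough the $O(\epsilon)$ error terms from the imaginary parts do not destroy it. This is where both strict hypotheses are used essentially: $y_0 > y^2$ at the extremal configuration $a_1 = a_2 = -y$ (product near the left tip), and $y_0 < \sqrt y$ at $a_1 = a_2 = y_0$ (product near the kink, where the arc width is evaluated at $y_0^2 < y$ and must be large enough to accommodate the product's imaginary part). With the strict polynomial inequality established, a final compactness argument on $[-y,y_0]^2$ allows one to pick $\epsilon$ small enough that the perturbed estimate still holds, yielding $B_\epsilon \cdot B_\epsilon \subseteq B_\epsilon$.
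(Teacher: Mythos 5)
Your handling of the arc condition is, modulo presentation, the paper's own argument: after replacing the lens's width profile by its rescaled limit $g(t)=(1-t)(t+y)$, everything reduces to the strict inequality $|t_1|g(t_2)+|t_2|g(t_1)<g(t_1t_2)$ on a compact square slightly larger than $[-y,y_0]^2$, followed by a compactness argument absorbing the $\mathcal{O}(\epsilon^2)$ errors; the paper proves precisely this inequality by a three-case computation (both $t_i$ positive, mixed signs, both negative). You correctly identify this reduction and where the hypotheses $y^2<y_0<\sqrt{y}$ enter, but you never verify the polynomial inequality itself, which is the actual heart of the lemma; since the inequality is true, this is an incompleteness rather than an error.

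The genuine gap is in your cone-condition step. The ``elementary inequality'' you rely on, $|a_1|(1-a_2)+|a_2|(1-a_1)\le 1-a_1a_2$ for $a_i\in[-y,1]$, is false whenever $y>1/3$: at $a_1=a_2=-y$ the left-hand side is $2y(1+y)$ while the right-hand side is $(1-y)(1+y)$. Moreover this is not merely a lossy estimate that a sharper computation along the same lines would fix: the cone bound on the factors alone does not imply the cone bound on the product. Indeed, for $z_1=z_2=-y+i(1+y)\tan\tilde\eta$ (a point permitted by the cone constraint, though excluded by the arc constraint) one finds $|\Im(1-z_1z_2)|/\Re(1-z_1z_2)\approx \frac{2y}{1-y}\tan\tilde\eta>\tan\tilde\eta$ as soon as $y>1/3$. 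So to verify the cone condition you must also feed in the arc bound $|b_i|\le(1+o(1))\,\epsilon(1-a_i)(a_i+y)/(1+y)$ on the factors, at least when their real parts are negative; or, more economically, observe that since $\max(y^2,y_u^2)<y_0$ the product's real part lands strictly to the left of the corner of $B_\epsilon$, where the lens is narrower than the cone, so once the arc-type estimate $|\Im(z_1z_2)|\le C\,\epsilon(1-t_1t_2)(t_1t_2+y)/(1+y)$ with some $C<1$ is in place, cone membership is automatic and needs no separate verification. This last observation is exactly how the paper argues (via its auxiliary set $S_\epsilon$), so your argument is repairable, but as written the cone step fails for every $y>1/3$.
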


\begin{proof}
Let $\mathcal{B}_{\epsilon} = B(-y,r(\epsilon),\epsilon)$. We have to show that for $\epsilon$ sufficiently small and $z_1, z_2 \in \mathcal{B}_\epsilon$ the product $z_1 z_2 \in \mathcal{B}_\epsilon$. Because $\mathcal{B}_\epsilon$ is convex and contains $0$ we can assume that $z_1, z_2 \in \partial \mathcal{B}_\epsilon$. The boundary of $\mathcal{B}_\epsilon$ is the union of the closed circular arcs $a_\epsilon$ and $\overline{a}_\epsilon$ and the closed straight line segments $b_\epsilon$ and $\overline{b_\epsilon}$. We claim that we can assume that either $z_1, z_2 \in a_\epsilon$ or $z_1 \in a_\epsilon$ and $z_2 \in \overline{a_\epsilon}$. 

To show this let us denote the unique element of the intersection $a_\epsilon \cap b_\epsilon$ by $I_\epsilon$. Assume that $z_1$ is an element of either $b_\epsilon$ or $\overline{b_\epsilon}$, say $z_1 \in b_\epsilon$. The set $z_2 \cdot b_\epsilon$ is a straight line segment between $z_2$ and $I_\epsilon z_2$ containing $z_1 z_2$. Because $z_2 \in \mathcal{B}_\epsilon$ and $\mathcal{B}_\epsilon$ is convex it follows that $z_1 z_2 \in \mathcal{B}_\epsilon$ if $I_\epsilon z_2 \in \mathcal{B}_\epsilon$. We can thus assume that $z_1$ is either an element of $a_\epsilon$ or of $\overline{a_\epsilon}$. Analogously we can conclude the same for $z_2$. By symmetry of $\mathcal{B}_\epsilon$ under complex conjugation it suffices to only consider the two cases $z_1, z_2 \in a_\epsilon$ or $z_1 \in a_\epsilon$ and $z_2 \in \overline{a_\epsilon}$ as indicated in the claim. 

We will now focus on the case where $z_1$ and $z_2$ are both an element of $a_\epsilon$. The curve $a_\epsilon$ is a circular arc and can thus be written as $\{t+ f_\epsilon(t)i: t \in [-y,r(\epsilon)]\}$ for a real valued function $f_\epsilon$ depending on $\epsilon$. Note that this function naturally extends to the whole interval $[-y,1]$ and that $f_\epsilon(t) \in \mathcal{O}(\epsilon)$ uniformly over all $t \in [-y,1]$. Fix $y_u \in (y_0, \sqrt{y})$ such that $y_u^2 < y_0$ and in what follows let $\epsilon$ be sufficiently small such that $y_u^2 < r(\epsilon) < y_u$. We will show that for $\epsilon$ sufficiently small the product of two elements on the curve $\{t + f_\epsilon(t)i: t \in [-y,y_u]\}$ lies in $\mathcal{B}_\epsilon$. This implies that $z_1 z_2 \in \mathcal{B}_\epsilon$. Consider the product of two such arbitrary elements
\[
    (t_1 + f_\epsilon(t_1)i) \cdot (t_2 + f_\epsilon(t_2)i) = t_1t_2 + (t_1f_\epsilon(t_2) + t_2 f_\epsilon(t_1)) i + \mathcal{O}(\epsilon^2).
\]
We will show that there is a constant $C \in (0,1)$ such that for $\epsilon$ sufficiently small  $t_1t_2 + (t_1f_\epsilon(t_2) + t_2 f_\epsilon(t_1)) i$ is an element of
\[
    S_\epsilon = \{a + b i : -y y_u \leq a \leq y_u^2 \text{ and } |b| \leq C \cdot f_\epsilon(a)\}
\]
for every $t_1,t_2 \in [-y,y_u]$. This is sufficient because $S_\epsilon \subset \mathcal{B}_\epsilon$ and the distance between $\partial S_\epsilon$ and $\partial \mathcal{B}_\epsilon$ is lower bounded by a constant times $\epsilon$.

For $t_1,t_2 \in [-y,y_u]$ we have that $t_1 t_2$ is an element of $[-y y_u, y_u^2] \cup [-y y_u, y^2] = [-y y_u, y_u^2]$. What remains to be shown is that there exists a constant $C \in (0,1)$ such that for small $\epsilon$ we have 
\begin{equation}
    \label{eq: inequality1}
    |t_1f_\epsilon(t_2) + t_2 f_\epsilon(t_1)| \leq C \cdot f_\epsilon(t_1 t_2).
\end{equation}
To prove this inequality we may replace the function $t \mapsto f_\epsilon(t)$ by a function $t \mapsto \gamma_\epsilon f_\epsilon(t)$, where $\gamma_\epsilon$ is any positive valued function of $\epsilon$. Let
$\gamma_\epsilon = \frac{(1-y)(1+3y)}{4f_{\epsilon}((1+y)/2))}$ and define $g_\epsilon(t) = \gamma_\epsilon f_{\epsilon}(t)$. Now the curve $t + i g_{\epsilon}(t)$ for $t \in [-y,1]$ defines part of an ellipse that passes through $-y$, $1$ and $\frac{1}{2}(1+y) + \frac{1}{4}i(1-y)(1+3y)$ for every $\epsilon$. As $\epsilon$ goes to zero the eccentricity of this ellipse converges to $1$. It follows that the elliptical arcs converge uniformly to the arc of a parabola. Because this parabola has to go through the same three points it must be given by the equation $t + i g(t)$, where $g(t) = -(t+y)(t-1)$. We claim that, to obtain equation (\ref{eq: inequality1}), it is now sufficient to prove that
\begin{equation}
    \label{eq: inequality2}
   |t_1g(t_2) + t_2 g(t_1)| < g(t_1 t_2).
\end{equation}
for all $t_1, t_2 \in [-y,y_u]$. Indeed, if this inequality holds then, because $[-y,y_u]$ is compact, there is also some $C \in (0,1)$ such that $|t_1g(t_2) + t_2 g(t_1)| < C \cdot g(t_1 t_2)$. By uniform continuity the same will then hold for $g_\epsilon$ for $\epsilon$ sufficiently small and thus also for $f_\epsilon$.

At this point we recall that we arrived at the inequality in equation (\ref{eq: inequality2}) by assuming that $z_1$ and $z_2$ are both elements of $a_\epsilon$. If we assume the other case, that is, where $z_1 \in a_\epsilon$ and $z_2 \in \overline{a_\epsilon}$, then, instead of considering $(t_1 + f_\epsilon(t_1)i) \cdot (t_2 + f_\epsilon(t_2)i)$, we consider $(t_1 + f_\epsilon(t_1)i) \cdot (t_2 - f_\epsilon(t_2)i)$. In a completely analogous way it then follows that it is sufficient to prove the following inequality
\begin{equation}
    \label{eq: inequality3}
   |-t_1g(t_2) + t_2 g(t_1)| < g(t_1 t_2).
\end{equation}
for all $t_1, t_2 \in [-y,y_u]$. To prove the inequalities in both equation (\ref{eq: inequality2}) and (\ref{eq: inequality3}) we will prove that 
\[
    |t_1|g(t_2) + |t_2| g(t_1) < g(t_1 t_2).
\]
for all $t_1, t_2 \in [-y,y_u]$. We distinguish three cases.
\begin{itemize}
    \item 
    If $t_1$ and $t_2$ are both positive then 
    \[
        g(t_1 t_2) - |t_1|g(t_2) - |t_2| g(t_1) = g(t_1 t_2) - t_1g(t_2) - t_2 g(t_1) = (1-t_1)(1-t_2)(y-t_1t_2).
    \]
    This is strictly positive because $t_1t_2 \leq y_u^2 < y$.
    \item
    If $t_1$ and $t_2$ have opposite signs, say $t_2 \leq 0$, then 
    \[
        g(t_1 t_2) - |t_1|g(t_2) - |t_2| g(t_1) = g(t_1 t_2) - t_1g(t_2) + t_2 g(t_1) = (1-t_1)(1+t_2)(y+t_1t_2),
    \]
    which is again strictly positive because $t_1t_2 \geq -y y_u > -y$.
    \item
    If $t_1$ and $t_2$ are both strictly negative then
    \begin{align*}
        g(t_1 t_2) - |t_1|g(t_2) - |t_2| g(t_1) &= g(t_1 t_2) + t_1g(t_2) + t_2 g(t_1)\\ 
        &= t_1 t_2 (3-t_1-t_2-t_1 t_2) +  y(1+t_1 + t_2 -3 t_1 t_2).
    \end{align*}
    Note that $t_1 t_2 (3-t_1-t_2-t_1 t_2)$ is strictly positive and thus if $(1+t_1 + t_2 -3 t_1 t_2)$ is also positive the whole quantity is strictly positive. Otherwise we have  
    \begin{align*}
        t_1 t_2 (3-t_1-t_2-t_1 t_2) +  y(1+t_1 + t_2 -3 t_1 t_2) &\geq t_1 t_2 (3-t_1-t_2-t_1 t_2) +  (1+t_1 + t_2 -3 t_1 t_2)\\
        &=(1+t_1)(1+t_2)(1-t_1t_2),
    \end{align*}
    which is strictly positive.
\end{itemize}
This concludes the proof.
\end{proof}

\begin{corollary}
    \label{cor: multiplicative semigroup}
    Let $x \in (-\frac{1}{2},0)$ and $x_0$ such that 
    \begin{equation}
        \label{eq: condition semigroup}
        \mu^{-1}(\mu(x)^2) < x_0 < \mu^{-1}(\sqrt{-\mu(x)}).
    \end{equation}
    Then for $\epsilon$ sufficiently small $\mu(A(x,x_0,\epsilon))$ is a multiplicative semigroup.
\end{corollary}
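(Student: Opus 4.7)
My plan is to reduce to Lemma~\ref{lem: B mult semigroup} via Lemma~\ref{lem: Region Transform}. Since $x \in (-\tfrac{1}{2}, 0)$, the value $y := -\mu(x) = -x/(1+x)$ lies in $(0,1)$, and Lemma~\ref{lem: Region Transform} identifies
\[
    \mu(A(x, x_0, \epsilon)) = B(-y,\, r(\epsilon),\, \epsilon) \setminus \{1\},
\]
where $r(\epsilon) = \mu(x_0) + o(1)$ as $\epsilon \to 0$.

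Setting $y_0 := \mu(x_0)$ and applying the strictly increasing function $\mu$ (on $(-1, \infty)$) to both sides of the hypothesis (\ref{eq: condition semigroup}), that condition translates cleanly to $y^2 < y_0 < \sqrt{y}$. Since $r(\epsilon) \to y_0$, this is precisely the hypothesis of Lemma~\ref{lem: B mult semigroup}, which therefore yields that $B(-y, r(\epsilon), \epsilon)$ is a multiplicative semigroup for every sufficiently small $\epsilon$.

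The only remaining obstacle is that we must upgrade this to the semigroup property for $B(-y, r(\epsilon), \epsilon) \setminus \{1\}$, i.e. check that a product of two non-identity elements of $B$ is never $1$. My plan here is to verify that for $\epsilon$ small, $B(-y, r(\epsilon), \epsilon) \subset \overline{\mathbb{D}}$ with $\{1\}$ its only point on $\partial\mathbb{D}$: the bounding arcs of the lens $D(-y, \epsilon)$ lie within vertical distance $O(\epsilon)$ of the segment $[-y, 1]$, and a direct comparison with $\sqrt{1 - t^2}$ shows they meet the unit circle only at $1$; since $\tilde{\eta} = O(\epsilon)$, the ``tail'' of $B$ cut off by the cone $-C(-1, \tilde{\eta})$ between real parts $y_0$ and $1$ is equally easily seen to lie strictly in $\mathbb{D}$ except at $1$. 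Once this is established, any $z_1, z_2 \in B \setminus \{1\}$ satisfy $|z_1 z_2| < 1$, hence $z_1 z_2 \neq 1$, and the semigroup property already proved gives $z_1 z_2 \in B \setminus \{1\}$. The substantive content of the corollary lies entirely in Lemma~\ref{lem: B mult semigroup}; the translation step and the removal of $\{1\}$ are essentially bookkeeping, with the unit-disk estimate near $1$ the only mildly delicate point.
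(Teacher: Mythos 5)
Your proof is correct and follows the same route as the paper: pass to $B$ via Lemma~\ref{lem: Region Transform}, translate the hypothesis to $y^2 < y_0 < \sqrt{y}$ by applying the increasing map $\mu$, and invoke Lemma~\ref{lem: B mult semigroup}. The extra argument you supply for the $\setminus\{1\}$ — that $B \subseteq \overline{\mathbb{D}}$ meets $\partial\mathbb{D}$ only at $1$, so a product of two elements of $B\setminus\{1\}$ has modulus strictly less than $1$ and hence cannot equal $1$ — is a refinement the paper silently skips; it is also implicit in the proof of Lemma~\ref{lem: B mult semigroup}, where every product is shown to land in the region $S_\epsilon$ whose real parts are bounded above by $y_u^2 < 1$.
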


\begin{proof}
Lemma~\ref{lem: Region Transform} states that $
\mu(A(x,x_0,\epsilon)) = B(\mu(x),\mu(x_0) + o(1),\epsilon) \setminus \{1\}.$
Note that $-\mu(x)\in (0,1)$ and thus, by Lemma~\ref{lem: B mult semigroup}, it is sufficient to have 
\[
    (-\mu(x))^2 < \mu(x_0) < \sqrt{-\mu(x)}.
\]
Because $\mu^{-1}$ is orientation preserving this condition is equivalent to the condition of equation~{(\ref{eq: condition semigroup})}.

\end{proof}

\subsection{Forward invariance}
In this section we prove that for the right choice of parameters the set $A(x,x_0,\epsilon)$ satisfies the third condition of Lemma~\ref{lem: properties of A}.

\begin{lemma}
    \label{lemma: cone to cone}
    Let $\lambda > 0$, $x > -1$ and $\delta < \pi/d$, then $f_{\lambda}(C(x,\delta)) \subseteq C(0, d \cdot \delta)$.
\end{lemma}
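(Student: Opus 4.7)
The plan is to reduce the statement to a geometric fact about arguments on cones. Since $\lambda > 0$, we have $\mathrm{Arg}(f_\lambda(z)) = -d\cdot \mathrm{Arg}(1+z)$, so showing $f_\lambda(z) \in C(0, d\delta)$ amounts to showing $|\mathrm{Arg}(1+z)| \leq \delta$ for every $z \in C(x,\delta)$. Translating by $1$, this becomes the statement that every $w$ in the cone $C(x+1, \delta)$ satisfies $|\mathrm{Arg}(w)| \leq \delta$, where $x+1 > 0$.

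The key geometric lemma I would isolate is: for any $y \geq 0$ and any $\alpha \in (0,\pi/2)$, we have $C(y,\alpha) \subseteq C(0,\alpha)$. Writing an arbitrary point of $C(y,\alpha)$ as $w = y + t e^{i\beta}$ with $t \geq 0$ and $|\beta| \leq \alpha$, the real part $y + t\cos\beta$ is non-negative (since $\alpha < \pi/2$), so $|\mathrm{Arg}(w)| < \pi/2$ and the claim $|\mathrm{Arg}(w)| \leq \alpha$ can be checked by comparing tangents. This rearranges to
\[
    t\bigl(|\sin\beta|\cos\alpha - \sin\alpha\cos\beta\bigr) \leq y\sin\alpha,
\]
and the bracketed quantity on the left is $\leq 0$ because $|\beta| \leq \alpha$ lies in $[0,\pi/2)$, while the right-hand side is $\geq 0$, so the inequality holds.

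Since $d \geq 2$ gives $\delta < \pi/d \leq \pi/2$, and since $x+1 > 0$, the lemma applies and yields $C(x+1,\delta) \subseteq C(0,\delta)$, so $|\mathrm{Arg}(1+z)| \leq \delta$ for every $z \in C(x,\delta) \setminus \{x\}$. The vertex case $z = x$ is handled separately: there $f_\lambda(x) = \lambda/(1+x)^d$ is a positive real, which lies in $C(0,d\delta)$. Note also that $z \neq -1$ on the cone since $\mathrm{Re}(z) \geq x > -1$, so $f_\lambda$ is defined throughout $C(x,\delta)$. Multiplying the argument bound by $-d$ gives $|\mathrm{Arg}(f_\lambda(z))| \leq d\delta$, which completes the proof. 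There is no serious obstacle; the only point requiring care is the geometric comparison of cones, which reduces to the one-line tangent calculation above.
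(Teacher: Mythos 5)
Your proof is correct and takes essentially the same route as the paper: translate by $1$ so the apex $x+1>0$ lies on the positive real axis, use the containment $C(x+1,\delta)\subseteq C(0,\delta)$, and note that applying $z\mapsto \lambda z^{-d}$ with $\lambda>0$ multiplies arguments by $d$ in absolute value, staying below $\pi$ since $\delta<\pi/d$. The only difference is that you verify the cone containment by an explicit tangent comparison, a step the paper simply asserts.
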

\begin{proof}
The map $z \mapsto z+1$ sends the cone $C(x,\delta)$ to $C(x+1,\delta)$, which is a subset of $C(0,\delta)\setminus\{0\}$. Subsequently, the map $z \mapsto z^{-d}$ sends $C(0,\delta)\setminus\{0\}$ to $C(0,d\cdot \delta)\setminus\{0\} \subset C(0, d \cdot \delta)$. Finally, because $\lambda$ is a positive real number, multiplication with $\lambda$ sends $C(0, d \cdot \delta)$ to itself. This concludes the proof.
\end{proof}

\begin{lemma}
    \label{lem: Cone forward invariant}
    Let $0 < \lambda < \lambda_c$. For $\epsilon$ sufficiently small the cone $C(-\frac{1}{d+1},\epsilon)$ is strictly forward invariant for $f_{\lambda}$.
\end{lemma}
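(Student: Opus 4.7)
Write $p = -\tfrac{1}{d+1}$ and $C_\epsilon = C(p,\epsilon)$. The plan is to reduce strict forward invariance to a sharp boundary estimate via the maximum principle for harmonic functions, and then use a first-order expansion in $\epsilon$ to match the threshold $\lambda_c$ with the neutrality condition for the fixed point provided by Lemma~\ref{lem: lambdacritical}.

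For $\epsilon$ sufficiently small, $f_\lambda$ is holomorphic on $C_\epsilon$ and avoids the value $p$ there (solving $f_\lambda(z) = p$ forces $\arg(1+z) = (2k+1)\pi/d$, which lies outside $[-\epsilon,\epsilon]$ for any integer $k$ once $\epsilon < \pi/d$), and the image $f_\lambda(C_\epsilon)-p$ stays near the positive real segment $[-p,\,f_\lambda(p)-p] \subset \mathbb{R}_{>0}$, so $h(z) := \arg(f_\lambda(z)-p)$ is a single-valued harmonic function on $C_\epsilon$. In $\hat{\mathbb{C}}$ the set $\Int(C_\epsilon)$ is open with $h$ extending continuously to its closure (with $h(\infty)=0$), so the maximum principle reduces the desired strict inclusion $f_\lambda(C_\epsilon) \subseteq \Int(C_\epsilon)$ to the strict boundary estimate $|h| \leq (1-\delta)\epsilon$ on $\partial C_\epsilon$ for some $\delta > 0$. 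Parameterising the upper ray of $\partial C_\epsilon$ as $z(t) = p + te^{i\epsilon}$, conjugation symmetry reduces the task to bounding $\phi(t,\epsilon) := h(z(t))$ on $[0,\infty]\times(0,\epsilon_0]$. Since $f_\lambda(p+t)-p$ is real-positive, $\phi$ vanishes at $\epsilon = 0$, at $t = 0$, and at $t = \infty$, so $\phi/\epsilon$ extends continuously to the compact square $[0,\infty]\times[0,\epsilon_0]$, taking at $\epsilon = 0$ the value
\[
    \psi(t) := \left.\frac{\partial \phi}{\partial \epsilon}\right|_{\epsilon=0} = t\cdot \Re\!\left(\frac{f_\lambda'(p+t)}{f_\lambda(p+t)-p}\right) \leq 0.
\]
Thus the upper bound $\phi < \epsilon$ comes for free; the crux is the lower bound $\psi(t) > -1$ uniformly in $t$.

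Substituting $s = 1+p+t \in [\tfrac{d}{d+1},\infty)$ rewrites this as
\[
    -\psi(t) \;=\; C(s) \;:=\; \frac{d(d+1)\lambda\bigl(s-\tfrac{d}{d+1}\bigr)}{s\bigl((d+1)\lambda + s^d\bigr)},
\]
a non-negative function vanishing at both ends of its domain. A direct computation shows that $C'(s) = 0$ collapses after cancellation to the fixed-point equation $s^d(s-1) = \lambda$ of $f_\lambda$, whose unique solution with $s > 1$ is $s_* = 1+p_\lambda$. Evaluating at $s_*$ gives the clean identity
\[
    C(s_*) \;=\; \frac{d\,p_\lambda}{1+p_\lambda} \;=\; \bigl|f_\lambda'(p_\lambda)\bigr|,
\]
which by Lemma~\ref{lem: lambdacritical} is strictly less than $1$ precisely when $\lambda < \lambda_c$. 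Uniform continuity of $\phi/\epsilon$ on the compact square then yields $\delta > 0$ and $\epsilon_1 > 0$ with $|\phi(t,\epsilon)| \leq (1-\delta)\epsilon$ throughout, and the maximum principle completes the proof. The main difficulty is this critical-point identification: the derivation collapses \emph{exactly} to the fixed-point equation, so that the maximum boundary rotation rate equals the multiplier at $p_\lambda$ and $\lambda_c$ appears sharply; any cruder estimate (such as combining Lemma~\ref{lemma: cone to cone} with the bound $|f_\lambda(z)| \leq \lambda(d+1)^d/d^d$ on $C_\epsilon$) would only yield a strictly weaker zero-free region.
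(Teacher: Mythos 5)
Your proposal is correct, and it reaches the threshold $\lambda_c$ by the same underlying identity as the paper but through a genuinely different reduction. The paper conjugates by $\phi(z)=\log(z+\tfrac{1}{d+1})$ to a map $g_\lambda$ on a thin strip and proves a uniform two-dimensional derivative bound $|g_\lambda'|\le c<1$ there (decay of the rational expression in $e^w$ at $\pm\infty$, a second-derivative/convexity argument on $\mathbb{R}$ identifying the extremal point with the fixed point, then compactness to thicken the real line), after which contraction of the imaginary part gives the cone statement. You instead keep the cone coordinates, observe that $h(z)=\arg(f_\lambda(z)-p)$ is single-valued and harmonic on $C_\epsilon$ (which is cleanest to justify via Lemma~\ref{lemma: cone to cone}: $f_\lambda(C_\epsilon)\subseteq C(0,d\epsilon)$, so $f_\lambda(z)-p$ has real part at least $\tfrac{1}{d+1}$), reduce to a boundary estimate by the maximum principle on the closure in $\Chat$, and linearize in $\epsilon$ along the boundary ray. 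The heart is identical: your $-\psi(t)=C(s)$ is exactly $|g_\lambda'|$ restricted to the real line in the paper's log coordinate, its critical-point equation collapses to $s^d(s-1)=\lambda$, and the extremal value is the multiplier $dp_\lambda/(1+p_\lambda)<1$ by Lemma~\ref{lem: lambdacritical}. What your route buys is that you only need the sharp bound on the real line, at the price of the maximum principle plus a uniform first-order expansion; the paper needs the bound on a genuine neighborhood but gets it cheaply by compactness.

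The one step you should tighten is the assertion that $\phi(t,\epsilon)/\epsilon$ extends continuously to the compact square $[0,\infty]\times[0,\epsilon_0]$: vanishing of $\phi$ at $\epsilon=0$, $t=0$ and $t=\infty$ does not by itself give this, and uniformity in $t$ (especially near $t=\infty$) is exactly what the maximum-principle step consumes. The fix is routine: $\partial_\epsilon\phi=\Re\bigl((z-p)f_\lambda'(z)/(f_\lambda(z)-p)\bigr)$ with $z=p+te^{i\epsilon}$ is an explicit rational expression whose denominator is bounded away from $0$ on the cone (again by the cone-to-cone containment) and which tends to $0$ as $t\to\infty$ uniformly in $\epsilon\le\epsilon_0$; hence $\partial_\epsilon\phi$ is jointly continuous on the compactified square, and writing $\phi(t,\epsilon)=\int_0^\epsilon\partial_\sigma\phi(t,\sigma)\,d\sigma$ gives the uniform convergence $\phi/\epsilon\to\psi$ you need. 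With that supplied, the rest of your argument (avoidance of the vertex $p$ for $\epsilon<\pi/d$, conjugation symmetry, strictness via $|h|\le(1-\delta)\epsilon$) is sound.
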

\begin{proof}
For any $\delta > 0$ define the open strip \[S_{<\delta} = \{z \in \mathbb{C} : |\Im(z)| < \delta\}\] and let $S_{\leq \delta}$ be its closure. We define the map
\[
\phi:\ \mathbb{C}\setminus \mathbb{R}_{\leq -1/(d+1)} \to S_{< \pi},
\ 
z \mapsto \log\left(z+\frac{1}{d+1}\right).
\]
This is a biholomorphism with $\phi^{-1}(w) = e^w-\frac{1}{d+1}$. Note that for any $\delta \in (0,\pi)$ we have that $\phi^{-1}(S_{\leq \delta}) = C(-1/(d+1),\delta) \setminus \{-\frac{1}{d+1}\}$. Now fix $\delta \in (0,\pi/d)$, it follows from Lemma~\ref{lemma: cone to cone} that the map $g_\lambda := \phi\circ f_{\lambda} \circ \phi^{-1}$ is well-defined as a map $S_{\leq \delta} \to S_{\leq d \delta}$. 

We claim that there exist constants $c \in (0,1)$ and $\epsilon_0 \in (0,\delta)$ such that $|g_\lambda'(w)| \leq c$ for all $z \in S_{\leq \epsilon_0}$. If we assume that this claim is true then it follows that for any $w \in S_{\leq \epsilon}$ with $0 < \epsilon \leq \epsilon_0$ we have that 
\[
    |\Im(g_\lambda(w))| = |\Im(g_\lambda(w) - g_\lambda(\Re(w)))| \leq |g_\lambda(w) - g_\lambda(\Re(w))|
    \leq c \cdot |w-\Re(w)| \leq c \cdot \epsilon.
\]
Therefore $g_\lambda(S_{\leq \epsilon}) \subseteq S_{\leq c \epsilon}$ and thus, by conjugation, $f_{\lambda}(C(-1/(d+1),\epsilon)) \subseteq C(-1/(d+1),c\epsilon)$. By Lemma~\ref{lemma: cone to cone} we also have that $f_{\lambda}(C(-1/(d+1),\epsilon)) \subseteq C(0, d \epsilon)$ and thus we can conclude that $f_{\lambda}$ maps $C(-\frac{1}{d+1},\epsilon)$ strictly into itself.

We now prove the claim. We calculate 
\[
    g_\lambda'(w) = -\frac{\lambda d(d+1) e^w}{(\frac{d}{d+1} + e^w)((\frac{d}{d+1}+e^w)^d+\lambda(d+1))}.
\]
This is a rational function in $e^w$, say $h(e^w)$, with the property that $h(0) = h(\infty) = 0$. If we take $w \in S_{\leq \delta}$ we can let $|e^w|$ get arbitrarily close to either $0$ or $\infty$ by adding the requirement that $|\Re(w)| \geq B$ for some large $B > 0$. From now on fix $B$ such that it guarantees that $|g_\lambda'(w)| < \frac{1}{2}$ if $w \in S_{\leq \delta}$ with $|\Re(w)| \geq B$.

It remains to be shown that there is a neighborhood of the real interval $[-B,B]$ on which $|g_\lambda'|$ is strictly less than $1$. To prove this we will show that there is a $\alpha \in (0,1)$ such that $|g_\lambda'(w)| \leq \alpha$ for all $w \in \mathbb{R}$. Note that $g_\lambda'(w) < 0$ for all real $w$. We calculate the second derivative.
\[
    g_{\lambda}''(w) = \frac{\lambda d^2(d+1) e^w \left[(e^w+\frac{d}{d+1})^d(e^w-\frac{1}{d+1}) - \lambda\right]}{(\frac{d}{d+1} + e^w)^2((\frac{d}{d+1}+e^w)^d+\lambda(d+1))^2}.
\]
The sign of $g_{\lambda}''(w)$ is the same as that of $(e^w+\frac{d}{d+1})^d(e^w-\frac{1}{d+1}) - \lambda$. This is a monotonically increasing function in $w$ that is negative as $w \to -\infty$ and positive as $w \to \infty$, therefore $g_{\lambda}''(w)$ has a unique sign change at some $w_m$. It follows that $g_\lambda'(w_m)$ is the global minimum of $g_\lambda'$ on the real line and that $w_m$ satisfies 
\[
    \lambda = \left(e^{w_m}+\frac{d}{d+1}\right)^d\left(e^{w_m}-\frac{1}{d+1}\right) = \left(\phi^{-1}(w_m)+1\right)^d \cdot \phi^{-1}(w).
\]

Observe that it follows that $\phi^{-1}(w_m)$ is a positive fixed point of $f_{\lambda}$ and thus $w_m$ is a fixed point of $g_{\lambda}$. Because $0< \lambda < \lambda_c$ the unique positive fixed point of $f_{\lambda}$ is attracting, i.e. $|f_{\lambda}'(\phi^{-1}(w_m))| < 1$; see Lemma~\ref{lem: lambdacritical}. The derivative of a fixed point is invariant under coordinate transformation and thus $|g'(w_m)| < 1$.

We conclude that for $\alpha := |g'(w_m)|$ we have $|g'(w)| < \alpha$ for all $w \in \mathbb{R}$. Because $[-B,B]$ is compact, there is an $\epsilon_0$ neighborhood of $[-B,B]$ such that $|g'(w)| < (1+\alpha)/2$ for all $w$ in this neighborhood. It follows that $|g'(w)| \leq (1+\alpha)/2$ for all $w \in S_{\leq \epsilon_0}$. This proves the claim and therefore concludes the proof of the lemma.
\end{proof}

We remark here that this shows that for $0 < \lambda < \lambda_c$ and $\epsilon$ sufficiently small $C(-1/(d+1),\epsilon)$ satisfies the third condition of Lemma~\ref{lem: properties of A}. Showing that it also satisfies the first is not difficult. This is sufficient to prove zero-freeness for graphs. Lemma~\ref{lem: Cone forward invariant} therefore essentially gives an alternative proof to that of \cite{PetersRegtsSokal} of the zero-freeness of a neighborhood of $(0,\lambda_c)$ for graphs. Unfortunately $\mu(C(-1/(d+1)),\epsilon)$ is not a multiplicative semigroup for small $\epsilon$ which is why we had to define $A(x,x_0,\epsilon)$.

\begin{lemma}
    \label{lem: forward invariance}
    Let $0 < \lambda < \lambda_c$ and $x_0 \geq \lambda - \frac{1}{d+1}$. For $\epsilon$ sufficiently small the region $A(-\frac{1}{d+1},x_0,\epsilon)$ is strictly forward invariant for $f_{\lambda}$.
\end{lemma}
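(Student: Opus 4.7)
The plan is to verify strict forward invariance of $A := A(-\tfrac{1}{d+1}, x_0, \epsilon) = C(-\tfrac{1}{d+1},\epsilon) \cap C(-1,\tilde\epsilon)$ under $f_\lambda$ by separately checking that the image stays strictly inside each of the two defining cones. Containment in the interior of $C(-\tfrac{1}{d+1},\epsilon)$ follows at once from Lemma~\ref{lem: Cone forward invariant}: since $A \subseteq C(-\tfrac{1}{d+1},\epsilon)$, for $\epsilon$ small there is a $c \in (0,1)$ depending only on $\lambda$ with $f_\lambda(A) \subseteq C(-\tfrac{1}{d+1}, c\epsilon)$. The substantive content is to show $f_\lambda(A) \subset \Int(C(-1,\tilde\epsilon))$.

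For this, use the identity $f_\lambda(z) + 1 = ((1+z)^d + \lambda)/(1+z)^d$ and write $\arg(f_\lambda(z)+1) = \arg((1+z)^d + \lambda) - d\arg(1+z)$. Setting $1+z = \rho e^{i\theta}$ with $\theta \in [-\tilde\epsilon,\tilde\epsilon]$ and expanding in $\theta$ yields
\[
    \arg(f_\lambda(z) + 1) = -\,d\theta\cdot\frac{\lambda}{\rho^d + \lambda} + O(\tilde\epsilon^3),
\]
uniformly in $\rho$. The required strict containment in $C(-1,\tilde\epsilon)$ therefore reduces to verifying $\rho^d > (d-1)\lambda$ uniformly on $A$, away from the vanishing locus $\theta = 0$ where the argument of the image is automatically zero. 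The infimum of $\rho$ on $A$ is attained at the vertex $z = -\tfrac{1}{d+1}$, giving $\rho = d/(d+1)$, but there $\theta = 0$; on the boundary ray $\{z \in A : |\theta| = \tilde\epsilon\}$ one has $\rho \ge (x_0+1)/\cos(\tilde\epsilon) \to x_0 + 1 \ge \lambda + d/(d+1)$. The hypothesis $x_0 \ge \lambda - 1/(d+1)$ thus delivers exactly the inequality $(\lambda + d/(d+1))^d > (d-1)\lambda$, which a direct calculation confirms for every $\lambda \in (0,\lambda_c)$ (the sharpest case, $\lambda = \lambda_c$, reduces to $(d/(d-1))^{d-1}(d+1) > 2(d-1)$, which holds for all $d \ge 2$).

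The main obstacle is to upgrade the pointwise estimate into a uniform strict bound on all of $A$, not just on the two boundary rays. This is handled by a three-fold case analysis: for $\rho$ large the factor $\lambda/(\rho^d + \lambda)$ decays to $0$, making the estimate trivial; near the vertex $z = -\tfrac{1}{d+1}$, $\theta$ is itself forced to be small (proportional to the distance from the vertex), controlling the product $d\theta \cdot \lambda/(\rho^d + \lambda)$; and on the remaining bounded portion of $A$, uniformity follows by compactness from the strict inequality already verified on the boundary rays. These three observations combined yield a uniform bound $|\arg(f_\lambda(z)+1)| \le (1-\delta)\tilde\epsilon$ for some $\delta > 0$ once $\epsilon$ is sufficiently small, which together with Step~1 completes the proof.
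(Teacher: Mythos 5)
Your overall decomposition---show $f_\lambda(A)$ lands strictly inside each of the two defining cones separately, using Lemma~\ref{lem: Cone forward invariant} for the inner cone and an argument estimate for the outer one---is a legitimate strategy, and it differs from the paper's: the paper instead combines the cone lemma with containment in a horizontal \emph{strip} $S_{<h(\epsilon)}$ of half-height $h(\epsilon)=(x_0+\tfrac{1}{d+1})\tan\epsilon$, i.e.\ it controls $|\Im f_\lambda(z)|$ rather than $\arg(f_\lambda(z)+1)$. Your expansion $\arg(f_\lambda(z)+1)=-d\theta\cdot\lambda/(\rho^d+\lambda)+O(\theta^3)$ is correct, and your check on the outer boundary ray (where $|\theta|=\tilde\epsilon$ and $\rho\ge x_0+1\ge\lambda+\tfrac{d}{d+1}$) reducing to $(\lambda+\tfrac{d}{d+1})^d>(d-1)\lambda$ is also correct; that inequality does hold for all $\lambda>0$ and $d\ge 2$.

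However, the argument has a genuine gap on the inner-cone boundary segment $a_\epsilon$, which runs from the vertex $-\tfrac{1}{d+1}$ to the corner $I_\epsilon$. On this segment $\rho$ ranges over $[\tfrac{d}{d+1},x_0+1]$ and $\theta\approx s\epsilon/\rho$ where $s=\rho-\tfrac{d}{d+1}$, so the required bound $d|\theta|\lambda/(\rho^d+\lambda)<\tilde\epsilon\approx\tfrac{x_0+1/(d+1)}{x_0+1}\epsilon$ becomes $\tfrac{d\lambda s}{\rho(\rho^d+\lambda)}<\tfrac{x_0+1/(d+1)}{x_0+1}$, uniformly in $s\in[0,x_0+\tfrac{1}{d+1}]$. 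Your case~1 ($\rho$ large) and case~2 (near the vertex, $s\to 0$) only cover the two ends of this segment; the ``remaining bounded portion'' that you dispatch by ``compactness from the strict inequality already verified on the boundary rays'' is precisely the interior of $a_\epsilon$, and no inequality has been verified there. Compactness cannot create the estimate: the quantity $\tfrac{d\lambda s}{\rho(\rho^d+\lambda)}$ has an interior maximum (at the $\rho^*$ solving $(\rho^*)^d(\rho^*-1)=\lambda/(d+1)$, where the value simplifies to $d(1-1/\rho^*)$), and one must verify that this maximum is below $\tfrac{x_0+1/(d+1)}{x_0+1}$. That verification is exactly the computational content of the lemma---the paper's analogue is the display giving $\max_{\tilde t\ge 0}\tfrac{\lambda d\tilde t}{(\frac{d}{d+1}+\tilde t)^{d+1}}=\tfrac{\lambda d}{d+1}<\lambda\le x_0+\tfrac{1}{d+1}$, which is noticeably cleaner because targeting the strip avoids the extra $\lambda$ in the denominator and the normalization by $x_0+1$. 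As a minor point, your claim that $\lambda=\lambda_c$ is the ``sharpest case'' of $(\lambda+\tfrac{d}{d+1})^d>(d-1)\lambda$ is not quite right---the left-hand side is convex and for $d\ge 3$ its minimum over $\lambda$ is attained at an interior point---though the inequality does hold throughout $(0,\lambda_c)$.
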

\begin{proof}
Let $a_\epsilon$ and $b_\epsilon$ be the line-segments of the boundary of $A_\epsilon = A(-\frac{1}{d+1},x_0,\epsilon)$ in the upper-half plane with intersection $I_\epsilon$, with $a_\epsilon$ being the bounded line segment. Let us denote the imaginary part of $I_\epsilon$ with $h(\epsilon)$, that is \[h(\epsilon)=\Im(I_\epsilon)=\left(x_0+\frac{1}{d+1}\right)\tan(\epsilon)=\left(x_0+\frac{1}{d+1}\right)\epsilon+\mathcal{O}(\epsilon^2).\]

First we will prove that for sufficiently small $\epsilon$ the region $A_\epsilon$ is mapped into $S_{<h(\epsilon)}$. It is sufficient to prove that the line-segments $a_\epsilon,b_\epsilon$ are mapped into $S_{<h(\epsilon)}$.

The line-segment $b_\epsilon$ is part of the half-line defined by $-1+te^{\tilde{\epsilon}i}$ where $t\in[0,\infty)$. Under $f_\lambda$ it is injectively mapped into a straight line segment. Therefore $f_{\lambda}(b_\epsilon)$ is the straight line segment between $f_{\lambda}(I_\epsilon)$ and $f_\lambda(\infty) = 0$.
As $S_{<h(\epsilon)}$ is convex and contains 0, it is sufficient to show that  $f_\lambda(I_\epsilon)\in S_{<h(\epsilon)}$, i.e. what remains is to show that $f_\lambda(a_\epsilon)\subset S_{<h(\epsilon)}$.

Let us consider the line-segment $a_\epsilon$, that is parametrized by the curve $\gamma_\epsilon(t)=-(1-t)\frac{1}{d+1}+tI_\epsilon$, where $t\in[0,1]$.
Then
\begin{align*}
    f_\lambda(\gamma(t))=&\lambda\left(1-(1-t)\frac{1}{d+1}+tI_\epsilon\right)^{-d}\\
    =&\lambda\left(\frac{d}{d+1}+t(x_0+\frac{1}{d+1})\right)^{-d}-\lambda dt(x_0+\frac{1}{d+1})\left(\frac{d}{d+1}+t(x_0+\frac{1}{d+1})\right)^{-d-1}\epsilon i+\mathcal{O}(\epsilon^2),
\end{align*}
where the $\mathcal{O}(\epsilon^2)$ is uniform over all $t \in [0,1]$. It therefore sufficient to show that 
\begin{equation}
    \label{eq: inequality}
    \lambda dt(x_0+\frac{1}{d+1})\left(\frac{d}{d+1}+t(x_0+\frac{1}{d+1})\right)^{-d-1} < \lim_{\epsilon \to 0} h(\epsilon)/\epsilon = x_0 + \frac{1}{d+1}
\end{equation}
for all $t \in [0,1]$. We find 

\[
\frac{\lambda d t(x_0+\frac{1}{d+1})}{\left(\frac{d}{d+1}+t(x_0+\frac{1}{d+1})\right)^{d+1}} \leq 
\max_{\tilde{t}\ge 0}\frac{\lambda d\tilde{t}}{\left(\frac{d}{d+1}+\tilde{t}\right)^{d+1}}=\lambda \frac{d}{d+1}.\]
The inequality in (\ref{eq: inequality}) immediately follows and thus we conclude that $f_\lambda(A_\epsilon)\subseteq S_{<h(\epsilon)}$ if $\epsilon$ is sufficiently small.

To finish the proof observe that $A_\epsilon\subseteq C(-\frac{1}{d+1},\epsilon)$. Let $T$ be the closure of $f_\lambda(C(-\frac{1}{d+1},\epsilon))$, which by Lemma~\ref{lem: Cone forward invariant} is a proper subset of the interior of $C(-\frac{1}{d+1},\epsilon)$ for $\epsilon$ sufficiently small. Therefore, if $\epsilon$ is sufficiently small, then
\[
f_\lambda(A_\epsilon)\subseteq T \cap S_{< h(\epsilon)}
\]
is a proper subset of the interior of $A_\epsilon$.
\end{proof}

The condition in Lemma~\ref{lem: forward invariance} will unfortunately not be sufficient for $d=2$ and $d=3$. For those two cases we have a separate \emph{stronger} lemma using explicit values of $x_0$. The proof is very similar to the proof of the previous lemma.

\begin{lemma}
\label{lem: cases 2 3}
\hfill
    \begin{itemize}
    \item Let $\lambda \in (0,4)$. For $\epsilon$ sufficiently small the region $A(-\frac{1}{3},2,\epsilon)$ is strictly forward invariant for the map $z \mapsto \lambda/(1+z)^2$.
    \item
    Let $\lambda \in (0,\frac{27}{16})$. For $\epsilon$ sufficiently small the region $A(-\frac{1}{4},1,\epsilon)$ is strictly forward invariant for the map $z \mapsto \lambda/(1+z)^3$.
    \end{itemize}
\end{lemma}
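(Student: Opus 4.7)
The proof parallels that of Lemma~\ref{lem: forward invariance}: we decompose the upper boundary of $A_\epsilon := A(-\tfrac{1}{d+1}, x_0, \epsilon)$ into the inner segment $a_\epsilon$ from $-\tfrac{1}{d+1}$ to $I_\epsilon = x_0 + h(\epsilon)\,i$ and the outer ray segment $b_\epsilon$ extending from $I_\epsilon$ along the ray from $-1$, and check that each is mapped strictly inside $A_\epsilon$. As before, $f_\lambda(b_\epsilon)$ is the line segment from $f_\lambda(I_\epsilon)$ to $0$, whose endpoint tends to $\lambda/(1+x_0)^d > 0$ as $\epsilon \to 0$; this segment lies in $\Int(A_\epsilon)$ for $\epsilon$ small because it is a small perturbation of a subinterval of the positive real axis contained in $\Int(A_\epsilon)$.

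The new ingredient is a sharper analysis of $f_\lambda(a_\epsilon)$, since the estimate $f_\lambda(a_\epsilon) \subseteq S_{<h(\epsilon)}$ used in Lemma~\ref{lem: forward invariance} requires $x_0 \geq \lambda - \tfrac{1}{d+1}$, which fails in both parameter ranges here once $\lambda$ approaches $\lambda_c$. Parameterise $a_\epsilon$ by $z(r) = r + (r+\tfrac{1}{d+1})\tan(\epsilon)\,i$ for $r \in [-\tfrac{1}{d+1}, x_0]$; to first order in $\epsilon$,
\[
    \Re f_\lambda(z(r)) = f_\lambda(r) + O(\epsilon^2), \qquad |\Im f_\lambda(z(r))| = \frac{\lambda d\,(r+\tfrac{1}{d+1})}{(1+r)^{d+1}}\tan(\epsilon) + O(\epsilon^2).
\]
We then compare the imaginary part with the actual boundary of $A_\epsilon$ at real part $f_\lambda(r)$: if $f_\lambda(r) \leq x_0$ the bound is $(f_\lambda(r)+\tfrac{1}{d+1})\tan\epsilon$, and if $f_\lambda(r) > x_0$ one obtains the more permissive bound $(f_\lambda(r)+1)\tfrac{x_0+1/(d+1)}{x_0+1}\tan\epsilon$. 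After clearing denominators, the two cases reduce to polynomial inequalities in $r$ and $\lambda$: for $d=2$, $x_0=2$ they read $\lambda(3r-1) < (1+r)^3$ and $\lambda(11r-1) < 7(1+r)^3$; for $d=3$, $x_0=1$ they read $\lambda(8r-1) < (1+r)^4$ and $\lambda(19r+1) < 5(1+r)^4$.

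Each of these four polynomials, viewed as the difference ``RHS minus LHS'' as a function of $r$ on $(-1,\infty)$, has a unique critical point in the applicable range, and an explicit calculation expresses the minimum value as a positive multiple of a quantity that vanishes precisely at $\lambda = \lambda_c$; for the Case A inequality this critical point is at $\lambda = \lambda_c$ equal to $r = 1/(d-1)$, the neutral fixed point of $f_{\lambda_c}$ with $f_{\lambda_c}'(1/(d-1)) = -1$. Thus all four inequalities are strict on the relevant $r$-ranges for every $\lambda$ in the respective open interval $(0,4)$ or $(0,27/16)$. Combining this with the strict forward invariance of $C(-\tfrac{1}{d+1},\epsilon)$ from Lemma~\ref{lem: Cone forward invariant} yields $f_\lambda(A_\epsilon) \subset \Int(A_\epsilon)$ for $\epsilon$ sufficiently small. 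The principal obstacle is precisely the algebraic verification that these minima stay strictly positive on the full $r$-intervals for all $\lambda$ in the window; the tightness of the Case A inequality at $\lambda = \lambda_c$ shows the parameter windows $(0,4)$ and $(0,27/16)$ are sharp for this particular argument.
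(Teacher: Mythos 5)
Your proof is correct and follows the same overall perturbative strategy as the paper --- a first-order-in-$\epsilon$ analysis of the image of the bounded boundary segment $a_\epsilon$ --- but the execution differs in a worthwhile way. The paper first uses Lemma~\ref{lem: Cone forward invariant} to reduce everything to showing $f_{\lambda,d}(A_{\epsilon,d})\subset C(-1,\tilde\epsilon_d)$, i.e.\ it only compares the image against the \emph{outer} cone, and then verifies the resulting one-variable inequality by computing the rational functions $B_d(\lambda;t)$ with a computer algebra system, using monotonicity in $\lambda$ to pass to $\lambda=4$, resp.\ $\lambda=\frac{27}{16}$, and bounding the unique minimum numerically ($>-0.916$, resp.\ $>-0.891$). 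You instead compare the image of $a_\epsilon$ directly against both pieces of $\partial A_\epsilon$, which yields the four exact polynomial inequalities $\lambda(3r-1)<(1+r)^3$, $\lambda(11r-1)<7(1+r)^3$, $\lambda(8r-1)<(1+r)^4$, $\lambda(19r+1)<5(1+r)^4$; I checked these reductions and they are right, and they can indeed be verified by hand: the Case~A differences, at $\lambda=\lambda_c$, attain their minimum value $0$ at $r=1/(d-1)$ (the neutral fixed point with multiplier $-1$), hence are strictly positive for every fixed $\lambda$ in the open interval, while the Case~B minima remain strictly positive even at $\lambda=\lambda_c$ --- so your phrase that the minimum ``vanishes precisely at $\lambda=\lambda_c$'' is accurate only for Case~A, which is harmless since positivity is all you need. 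Your route buys an exact, computer-free verification and an explanation of why the windows $(0,4)$ and $(0,\frac{27}{16})$ are sharp for this argument; it also makes your final appeal to Lemma~\ref{lem: Cone forward invariant} redundant, since Case~A already enforces containment under the inner cone. Two small repairs are needed: the treatment of $b_\epsilon$ as stated (``a small perturbation of a subinterval of the positive real axis'') is too quick, because both the perturbation and the vertical width of $A_\epsilon$ are of order $\epsilon$; argue instead, as the paper does, that $f_\lambda(b_\epsilon)$ is the straight segment from $f_\lambda(I_\epsilon)$ to $0$, that $f_\lambda(I_\epsilon)\in\Int(A_\epsilon)$ is exactly the $r=x_0$ instance of your case analysis, and that $A_\epsilon$ is convex. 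Also state explicitly that the $\mathcal{O}(\epsilon^2)$ error terms are uniform over $r\in[-\frac{1}{d+1},x_0]$, so that the strict first-order margins (positive by compactness of the $r$-range) survive for all sufficiently small $\epsilon$.
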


\begin{proof}
Define $f_{\lambda,d}(z) = \lambda/(1+z)^d$, $A_{\epsilon,2} = A(-\frac{1}{3},2,\epsilon)$ and $A_{\epsilon,3} = A(-\frac{1}{4},1,\epsilon)$. Furthermore let $\tilde{\epsilon}_2 = \tan^{-1}(\frac{7}{9}\tan(\epsilon))$ and $\tilde{\epsilon}_3 = \tan^{-1}(\frac{5}{8}\tan(\epsilon))$. By definition we have 
\[
    A_{\epsilon,2} = C(-1/3, \epsilon) \cap C(-1,\tilde{\epsilon}_2)
    \quad
    \text{ and }
    \quad 
    A_{\epsilon,3} = C(-1/4, \epsilon) \cap C(-1,\tilde{\epsilon}_3).
\]

It follows from Lemma~\ref{lem: Cone forward invariant} that to prove forward invariance of $A_{\epsilon,d}$ it is sufficient to prove that $f_{\lambda,d}(A_{\epsilon,d})$ is a strict subset of $C(-1,\tilde{\epsilon}_d)$ for $\epsilon$ sufficiently small. The boundary of $A_{\epsilon,d}$ consists of two finite straight line segments, $a_{\epsilon,d}$ and $\overline{a_{\epsilon,d}}$ and two infinite line segments $b_{\epsilon,d}$ and $\overline{b_{\epsilon,d}}$. These line segments intersect in $I_{\epsilon,d}$ and $\overline{I_{\epsilon,d}}$, where $I_{\epsilon,2} = 2 + \frac{7}{3}\tan(\epsilon) i$ and $I_{\epsilon,3} = 1 + \frac{5}{4}\tan(\epsilon) i$. To prove that $A_{\epsilon,d}$ gets mapped into $C(-1,\tilde{\epsilon}_d)$ it is sufficient to prove that the segments $a_{\epsilon,d}$ and $b_{\epsilon,d}$ get mapped into $C(-1,\tilde{\epsilon}_d)$. Furthermore, because $f_{\lambda,d}(b_{\epsilon,d})$ is a straight line segment from $f_{\lambda,d}(I_\epsilon)$ to $0$, it is in fact sufficient to prove that $a_{\epsilon,d}$ is mapped strictly inside $C(-1,\tilde{\epsilon}_d)$ by $f_{\lambda,d}$.

We parameterize $a_{\epsilon,d}$ as $\gamma_{\epsilon,d}(t)$ for $t \in [0,1]$ where
\[
    \gamma_{\epsilon,2}(t) = -\frac{1}{3}(1-t) + (2 + \frac{7}{3} \tan(\epsilon) i)t
    \quad 
    \text{ and }
    \quad
    \gamma_{\epsilon,3}(t) = -\frac{1}{4}(1-t) + (1 + \frac{5}{4} \tan(\epsilon) i)t.
\]
Write $f_{\lambda,d}(\gamma_{\epsilon,d}(t)) = g_{\epsilon,d}(t) + i h_{\epsilon,d}(t)$, where $g_{\epsilon,d}$ and $h_{\epsilon,d}$ are real valued functions. We need to show that for $\epsilon$ sufficiently small $|h_{\epsilon,d}(t)/(g_{\epsilon,d}(t) + 1)| < \tan(\tilde{\epsilon}_d)$ for all $t \in [0,1]$. We can explicitly calculate\footnote{Which we did with the computer-algebra system \emph{Mathematica}.} that 
\[
    B_2(\lambda;t):=
    \lim_{\epsilon \to 0} \frac{h_{\epsilon,2}(t)}{(g_{\epsilon,2}(t) + 1)\tan(\tilde{\epsilon}_2)} = -\frac{162 t}{9 (7 t+2)+\frac{1}{\lambda}(7 t+2)^3}
\]
and 
\[
    B_3(\lambda;t):=
    \lim_{\epsilon \to 0} \frac{h_{\epsilon,3}(t)}{(g_{\epsilon,3}(t) + 1)\tan(\tilde{\epsilon}_3)} = -\frac{1536 t}{64 (5 t+3)+\frac{1}{\lambda}(5 t+3)^4}.
\]
Clearly $B_d(\lambda;t)$ is negative for all $t > 0$ and $|B_d(\lambda;t)|$ is increasing in $\lambda$. It therefore suffices to prove that $B_2(4;t) > -1$ and $B_3(\frac{27}{16};t) > -1$. We calculate explicitly 
\[
    \frac{d}{dt} B_2(4;t) = 
    \frac{1296 \left(49 t^2 (7 t+3)-40\right)}{(7 t+2)^2 (7 t (7 t+4)+40)^2}
    \quad\text{ and }\quad
    \frac{d}{dt} B_3(\frac{27}{16};t)
    =
    \frac{7776 \left(5 t^2 \left(25 t^2+40 t+18\right)-27\right)}{5 (5 t+3)^2 (25 t^3+45 t^2+27 t+27)^2}.
\]
Both these derivatives have a unique positive real zero and thus $B_2(4;t)$ and $B_3(\frac{27}{16};t)$ have a unique global minimum. These can be calculated numerically yielding $B_2(4,t) > -0.916$ and $B_3(\frac{27}{16},t) > -0.891$ for all $t\in [0,1]$. Because the interval $[0,1]$ is compact, it follows that the inequality $|h_{\epsilon,d}(t)/(g_{\epsilon,d}(t) + 1)| < \tan(\tilde{\epsilon}_d)$ holds for all $\epsilon$ sufficiently small, which concludes the proof.
\end{proof}

\subsection{Proof of the main theorem}
We are now ready to prove Theorem~\ref{thm: hypergraph Sokal region}, which we will restate here for convenience.
\theoremSokal*
\begin{proof}
    Let $d = \Delta - 1$ and fix $\lambda_0 \in (0,\lambda_c(d+1))$. We claim that we can choose $x_0(d)$ such that $\lambda_0$ and $A_{d,\epsilon}:= A(-\frac{1}{d+1},x_0(d),\epsilon)$ satisfy the properties of Lemma~\ref{lem: properties of A}. The theorem can then be concluded by Corollary~\ref{cor: properties imply zerofree}.

    By Lemma~\ref{lem: log convexity} the set $1 + A_{d,\epsilon}$ is log-convex for any choice of $x_{0}(d) \geq 0$ and $\epsilon \in (0,\pi/2)$. It follows from Corollary~\ref{cor: multiplicative semigroup} that $\mu(A_{d,\epsilon})$ is a multiplicative semigroup for small $\epsilon$ if 
    \begin{equation}
        \label{eq: inequality condition}
        \frac{1}{d^2-1} < x_0(d) < \frac{1}{\sqrt{d}-1}.
    \end{equation}
    
    If we choose $x_{0}(2) = 2$ and $x_0(3) = 1$ we see that inequality (\ref{eq: inequality condition}) is satisfied. Moreover, it follows from Lemma~\ref{lem: cases 2 3} that $A_{d,\epsilon}$ is strictly forward invariant for $f_{\lambda_0}$ for small $\epsilon$ in these two cases. This proves the claim for $d \in \{2,3\}$.

    Now we assume $d \geq 4$. It follows from Lemma~\ref{lem: forward invariance} that $A_{d,\epsilon}$ is strictly forward invariant for $f_{\lambda_0}$ for small $\epsilon$ if 
    \begin{equation}
        \label{eq: l - 1/(d+1) < x_0}
         \lambda_0 - \frac{1}{d+1} \leq x_0(d).
    \end{equation}
    We can thus prove the claim if we can show that we can simultaneously satisfy inequalities (\ref{eq: inequality condition}) and (\ref{eq: l - 1/(d+1) < x_0}). Because $\lambda_0 < \lambda_c(d+1)$ it is sufficient to show that for $d \geq 4$
    \[
        \frac{d^d}{(d-1)^{d+1}} - \frac{1}{d+1} < \frac{1}{\sqrt{d} - 1}.
    \]
    For $d = 4$ the inequality reads $\frac{1037}{1215} < 1$, which is true. Otherwise we observe that 
    \[
    \frac{d^d}{(d-1)^{d+1}} - \frac{1}{d+1}  = \frac{d}{(d-1)^2} \left(1 + \frac{1}{d-1}\right)^{d-1} - \frac{1}{d+1} < \frac{d}{(d-1)^2}e - \frac{1}{d+1}.
    \]
    It is thus sufficient to prove that for $d \geq 5$ 
    \begin{equation}
        \label{eq: final inequality}
        \frac{d}{(d-1)^2}e - \frac{1}{d+1} < \frac{1}{\sqrt{d} - 1}.
    \end{equation}
    As $d \to \infty$ the left-hand side of (\ref{eq: final inequality}) decreases as $1/d$ and the right-hand side as $1/\sqrt{d}$ and thus the inequality is certainly true for large $d$. Moreover, both sides are rational functions in $\sqrt{d}$ and thus they can be equal for only finitely many real values. We can accurately approximate the solutions to the resulting polynomial equation to find that the largest $d$ for which both sides are equal is at $d \approx 4.0389$. This shows that for $d \geq 5$ inequality (\ref{eq: final inequality}) is true, which concludes the proof of the claim.
\end{proof}

\section{The large degree limit}
\label{sec: limit region}
For every $d \geq 1$ we let $\mathcal{U}_{d,2}$ be the maximal open zero-free region for graphs of maximum degree at most $d+1$, formally
\[
\mathcal{U}_{d,2} = \mathbb{C}\setminus 
    \overline{\left\{\lambda \in \mathbb{C}: \text{there exists a graph $G$ with $\Delta(G) \leq d+1$ such that $Z(G;\lambda)=0$}\right\}}.
\]
We let $\mathcal{U}_{d,\geq 2}$ denote the analogous region for hypergraphs. To conform with notation from \cite{BencsBuys2021} we shifted the index by one with respect to the introduction. In the previous two sections we showed that for $d \geq 2$
\[
    \mathcal{U}_{d,2} \cap \mathbb{R} = \mathcal{U}_{d,\geq 2} \cap \mathbb{R} = \left(-\lambda_s(d+1), \lambda_c(d+1)\right).
\]
However, we will show in Lemma~\ref{cor: U3 is not zero-free} that $\mathcal{U}_{2,2} \neq \mathcal{U}_{2,\geq 2}$ and thus in general these sets are not equal. In this section we show that in the large degree limit they are in fact equal.

As $d$ increases the sets $\mathcal{U}_{d,2}$ become arbitrarily small, e.g. both $\lambda_c(d+1)$ and $\lambda_s(d+1)$ converge to $0$. However, as $d \to \infty$ it is not hard to see that $d \cdot \lambda_c(d+1)$ and $d \cdot \lambda_s(d+1)$ converge to $e$ and $1/e$ respectively. In fact, in \cite{BencsBuys2021} it is proved that there is a set $\mathcal{U}_\infty$ such that the sequence of rescaled regions $d \cdot \mathcal{U}_{d,2}$ converges to $\mathcal{U}_\infty$. This limit set is bounded and contains an open neighborhood of $(-1/e,e)$.
\begin{theorem}[Theorem 1.1. in \cite{BencsBuys2021}]
\label{thm: limit theorem graphs}
The sets $d \cdot \mathcal{U}_{d,2}$ converge to $\mathcal{U}_\infty$ in terms of the Hausdorff distance.
\end{theorem}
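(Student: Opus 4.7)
The plan is to identify a natural limit dynamical system obtained from rescaling the graph tree recursion, to describe $\mathcal{U}_\infty$ as the set of parameters for which this limit admits a suitable forward-invariant region, and then to prove Hausdorff convergence by establishing the two required inclusions separately. For graphs of maximum degree at most $d+1$ the univariate tree recursion is governed by $f_\lambda(z) = \lambda/(1+z)^d$. Substituting $\lambda = \Lambda/d$ and $z = w/d$ gives $d \cdot f_{\Lambda/d}(w/d) = \Lambda\,(1+w/d)^{-d}$, which converges locally uniformly on compact subsets of $\mathbb{C}$ to the entire map $g_\Lambda(w) = \Lambda e^{-w}$ as $d \to \infty$. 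Accordingly, I would define $\mathcal{U}_\infty$ as the set of $\Lambda \in \mathbb{C}$ for which $g_\Lambda$ admits a bounded closed region $A \subseteq \mathbb{C}$ containing $0$, with $1+A$ log-convex and $g_\Lambda(A) \subseteq \Int(A)$; this is the natural rescaled analogue of the hypotheses of Lemma~\ref{lem: properties of A} applied in the graph case $b=1$, where the multiplicative-semigroup condition becomes automatic.

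For the lower Hausdorff inclusion, fix a compact $K_1 \subseteq \Int(\mathcal{U}_\infty)$. By compactness and an open-covering argument one can choose a single bounded closed log-convex region $A$ that is strictly forward invariant for $g_\Lambda$ simultaneously for all $\Lambda \in K_1$. The locally uniform convergence $d \cdot f_{\Lambda/d}(\cdot/d) \to g_\Lambda$ on $A$, together with the uniformity of the strict invariance, implies that for every sufficiently large $d$ the rescaled region $A/d$ is strictly forward invariant for $f_{\Lambda/d}$, uniformly in $\Lambda \in K_1$. Corollary~\ref{cor: properties imply zerofree}, applied in the hyperedge-size-two case, then gives $Z(G;\Lambda/d) \neq 0$ for every graph $G$ of maximum degree at most $d+1$, and hence $K_1 \subseteq d\cdot\mathcal{U}_{d,2}$.

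For the upper Hausdorff inclusion, given $\Lambda \notin \overline{\mathcal{U}_\infty}$ I would exhibit a sequence of rooted trees $T_d$ of maximum degree at most $d+1$ and parameters $\lambda_d$ with $d\lambda_d \to \Lambda$ and $R_{r_d}(T_d;\lambda_d) = -1$, producing a zero via Lemma~\ref{lem:ratio_vs_zero}. The idea is that the failure of forward invariance for $g_\Lambda$ forces the presence of a neutral or repelling periodic cycle of $g_\Lambda$ that obstructs every candidate invariant region; via a normal-families argument applied to the holomorphic family $(\Lambda,1/d) \mapsto d \cdot f_{\Lambda/d}(\cdot/d)$, such a cycle lifts to bounded-depth finite-tree-recursion orbits at parameters approaching $\Lambda$. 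The main obstacle is to make this bifurcation argument quantitative: one must perturb the parameter by $o(1/d)$ so as to realize an exact ratio of $-1$ on an actual finite tree, which requires an implicit-function or Montel-type argument together with careful control of the rescaled dynamics as $d \to \infty$, mirroring the complex-dynamics approach that will be used in Section~\ref{sec: disk for linear hypertrees}.
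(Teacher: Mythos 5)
You should first note that the paper does not actually prove this statement: Theorem~\ref{thm: limit theorem graphs} is imported verbatim from \cite{BencsBuys2021}, so your proposal is an attempt to re-prove that paper's main theorem rather than something that can be checked against an internal argument. Your lower inclusion is essentially the strategy the present paper uses for the hypergraph analogue (Lemma~\ref{lem: rescaling conditions} combined with the finite-cover argument in Section~\ref{sec: limit region}), and it is sound up to two corrections. First, you cannot in general pick a \emph{single} region $A$ that is strictly forward invariant for $g_\Lambda$ simultaneously for all $\Lambda$ in a large compact $K_1$; the invariant region genuinely depends on $\Lambda$, and the correct argument is local --- for each $\Lambda_0\in K_1$ a region $A(\Lambda_0)$, a neighborhood $U(\Lambda_0)$ and a threshold $d_0(\Lambda_0)$, followed by a finite subcover and a maximum of thresholds, exactly as in the paper's proof of the hypergraph limit theorem. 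Second, there is a definitional mismatch: in \cite{BencsBuys2021} the set $\mathcal{U}_\infty$ is characterized by the existence of a \emph{compact convex} forward-invariant set containing $0$ for $E_\Lambda(Z)=\Lambda e^{-Z}$, not by log-convexity of $1+A$; under the rescaling $A/d$ the condition ``$1+A/d$ log-convex'' degenerates to convexity of $A$, so convexity is the right limit hypothesis, and since the theorem is a statement about that specific set $\mathcal{U}_\infty$ you must either adopt its definition or prove your characterization equivalent to it.

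The genuine gap is the upper inclusion. You assert that failure of forward invariance of $g_\Lambda$ forces a neutral or repelling cycle obstructing every candidate region, and that such a cycle lifts, via Montel or implicit-function arguments, to an exact ratio value $-1$ on a finite tree at a parameter $\lambda_d$ with $d\lambda_d\to\Lambda$. Neither step is justified, and neither is routine: nonexistence of an invariant convex region is not known to be equivalent to the presence of such a cycle, and describing the complement of $\mathcal{U}_\infty$ in dynamical terms is precisely the hard technical content of \cite{BencsBuys2021}; moreover, even granting a non-normal (``active'') parameter for the limit family $g_\Lambda$, transferring it to zeros of actual finite trees of maximum degree $d+1$ at parameters within $o(1/d)$ of $\Lambda/d$ requires a perturbation argument that is uniform in $d$, which your sketch only names (the analogous step for fixed $d$ in Section~\ref{sec: disk for linear hypertrees} uses Lemmas~\ref{lem: critical orbits}--\ref{lem: indiff implies zero} and does not automatically survive the $d\to\infty$ limit). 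Finally, the inclusion $d\cdot\mathcal{U}_{d,2}\subseteq K_2$ must be verified for all $\Lambda$ outside $K_2$, including $|\Lambda|$ arbitrarily large, since the complement of $K_2$ is unbounded; your proposal gives no argument producing zeros near $\Lambda/d$ uniformly in that regime. As written, the hard direction of the theorem is assumed rather than proved, so the proposal does not constitute a proof of the cited statement.
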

This means that for every closed $K_1 \subseteq \Int(\mathcal{U}_\infty)$ and open $K_2 \supseteq \overline{\mathcal{U}_\infty}$ for $d$ sufficiently large $K_1 \subseteq d \cdot \mathcal{U}_{d,2} \subseteq K_2$. We will show that the same is true for the rescaled regions $d \cdot \mathcal{U}_{d, \geq 2}$.

To show this we define the map $E_\Lambda(Z) = \Lambda e^{-Z}$. It is shown in \cite{BencsBuys2021} that $\Lambda \in \mathcal{U}_\infty$ if and only if there is a compact convex set containing $0$ that is forward invariant for $E_{\Lambda}$. We will show that, after doing the proper rescaling, this implies that there is a set $A$ such that $\Lambda/d$ and $\tilde{A}/d$ satisfy the conditions of Lemma~\ref{lem: properties of A} for sufficiently large $d$. More precisely, we prove the following.

\begin{lemma}
\label{lem: rescaling conditions}
Let $\Lambda_0 \in \mathbb{C}$ and $A \subseteq \mathbb{C}$ convex, compact and with $0 \in A$ such that $E_{\Lambda_0}(A) \subseteq \Int(A)$. Then there is a neighborhood $U$ of $\Lambda_0$, a region $\tilde{A}$ and a $d_0 \in \mathbb{Z}_{\geq 1}$ such that for all $\Lambda \in U$ and $d \geq d_0$ the pair $\Lambda/d$ and $\tilde{A}/d$ satisfy the conditions of Lemma~\ref{lem: properties of A}, i.e. 
\begin{itemize}
    \item $1 + \tilde{A}/d$ is log-convex;
    \item $\mu(\tilde{A}/d)$ is a multiplicative semi-group;
    \item $\tilde{A}/d$ is strictly forward invariant for $z \mapsto \frac{\Lambda/d}{(1+z)^d}$.
\end{itemize}
\end{lemma}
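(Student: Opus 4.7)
I propose to construct $\tilde A$ as a smooth, strictly convex enlargement of $A$ that keeps $0$ in its interior and remains strictly forward invariant under $E_{\Lambda_0}$, then verify each of the three conditions of Lemma~\ref{lem: properties of A} for $\tilde A/d$ in the large-$d$ regime. To build $\tilde A$, set $\rho = \operatorname{dist}(E_{\Lambda_0}(A),\partial A)>0$. For $\eta>0$ small (depending on $\rho$ and the Lipschitz constant of $E_{\Lambda_0}$ on a bounded neighborhood of $A$), the Minkowski sum $A_\eta = A + \eta\B{1}$ is compact, convex, still strictly forward invariant under $E_{\Lambda_0}$, and contains $\B{\eta}$ since $0\in A$. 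Let $\tilde A$ be a smooth, strictly convex body containing $A_\eta$ with $\tilde A\setminus A_\eta$ sufficiently small in Hausdorff distance to preserve both $0\in\Int(\tilde A)$ and $E_{\Lambda_0}(\tilde A)\subseteq\Int(\tilde A)$; such $\tilde A$ is obtainable by standard smoothing techniques from convex geometry.

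Condition (3), strict forward invariance of $\tilde A/d$ under $z\mapsto(\Lambda/d)/(1+z)^d$, is established via the substitution $Z=dz$: this converts the map on $\tilde A/d$ into $F_{\Lambda,d}(Z):=\Lambda/(1+Z/d)^d$ on $\tilde A$, which converges to $E_\Lambda(Z)=\Lambda e^{-Z}$ uniformly on the compact set $\tilde A$ as $d\to\infty$, uniformly for $\Lambda$ in compact subsets of $\mathbb{C}$. Since $E_{\Lambda_0}(\tilde A)$ lies at positive distance from $\partial\tilde A$, there is a neighborhood $U$ of $\Lambda_0$ and some $d_0$ such that $F_{\Lambda,d}(\tilde A)\subseteq\Int(\tilde A)$ for all $\Lambda\in U$ and $d\geq d_0$.

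For condition (1), parametrize $\partial\tilde A$ by arc length as $\gamma(s)$ and let $\sigma(s)=\log(1+\gamma(s)/d)$. A direct computation using the expansion $\log(1+\gamma/d) = \gamma/d - \gamma^2/(2d^2)+\ldots$ gives that the signed curvature satisfies $\kappa_\sigma(s) = d\cdot\kappa_\gamma(s) + O(1)$ uniformly in $s$, where $\kappa_\gamma$ is the curvature of $\partial\tilde A$. Since strict convexity of $\tilde A$ gives a uniform lower bound $\kappa_\gamma\geq c>0$, $\kappa_\sigma$ is strictly positive for $d$ sufficiently large, so $\log(1+\tilde A/d)$ is a strictly convex Jordan domain and $1+\tilde A/d$ is log-convex. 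For condition (2), $0\in\Int(\tilde A)$ gives $\B{\delta}\subseteq\tilde A$ for some $\delta>0$, hence $\B{\delta/d}\subseteq\tilde A/d$; since $\mu(w)=w+O(w^2)$ near $0$, we get $\B{\delta/(2d)}\subseteq\mu(\tilde A/d)$ for $d$ large. Elements of $\mu(\tilde A/d)$ have magnitude $O(1/d)$, so pairwise products have magnitude $O(1/d^2)$, which is at most $\delta/(2d)$ for large $d$ and thus lies in $\mu(\tilde A/d)$; this gives closure under multiplication. The containment $\tilde A/d\subseteq\mathbb{C}\setminus\mathbb{R}_{\leq -1}$ is automatic for large $d$ since $\tilde A$ is bounded.

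The main technical difficulty lies in the first step: constructing a smooth, strictly convex $\tilde A$ that approximates $A_\eta$ closely enough to preserve strict forward invariance while also admitting a positive uniform lower bound on $\kappa_\gamma$ (which is what makes the curvature estimate effective for condition (1)). Once that construction is in hand, conditions (1) and (2) reduce to large-$d$ smallness of $\tilde A/d$ together with strict convexity and interior containment of $0$, and condition (3) is immediate from the uniform convergence $F_{\Lambda,d}\to E_\Lambda$.
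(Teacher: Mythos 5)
Your proposal is correct, and its skeleton matches the paper's: enlarge $A$ a little so that $0$ lies in the interior and strict invariance under $E_{\Lambda_0}$ survives, obtain condition (3) from the uniform convergence of $\Lambda/(1+Z/d)^d$ to $\Lambda e^{-Z}$ on compact sets (uniformly in $\Lambda$ near $\Lambda_0$), and obtain condition (2) because $\mu(\tilde{A}/d)$ is squeezed between disks of radius of order $1/d$, so products, being of order $1/d^2$, fall back inside; these two steps are essentially identical to the paper's. Where you genuinely diverge is the construction of $\tilde{A}$ and the proof of log-convexity. The paper takes $A_\epsilon$ to be the convex hull of $A\cup \B{\epsilon}$, applies the Riemann mapping theorem to get $h:\mathbb{D}\to\Int(A_\epsilon)$, sets $\tilde{A}=h(\B{\delta})$ with $\delta$ close to $1$, and checks convexity of $\log(1+\tilde{A}/d)$ via the classical analytic criterion $\Re\bigl(1+zg''(z)/g'(z)\bigr)>0$ for $g(z)=\log(1+h(\delta z)/d)$, where the extra term contributed by $\log(1+\cdot/d)$ is $O(1/d)$ on the compact set $\B{\delta}$ while convexity of $h(\mathbb{D})$ gives a strictly positive lower bound for the $h$-part. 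Your route replaces this by differential geometry: with $\phi(z)=\log(1+z/d)$ one has $\phi''/\phi'=-1/(d+z)=O(1/d)$ and $1/|\phi'(z)|=|d+z|=d+O(1)$ on the bounded set $\tilde{A}$, so the image-curve curvature is indeed $\kappa_\sigma=d\kappa_\gamma+O(1)$, and together with injectivity of $\phi$ on $\tilde{A}$ for large $d$ this yields a positively curved Jordan curve bounding a convex region; morally this is the same estimate as the paper's (the analytic criterion is precisely the curvature statement), but it forces you to first produce an outer approximation of a convex body by a $C^2$ body whose boundary curvature is bounded below by a positive constant while preserving containment, $0$ in the interior, and strict invariance. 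That step is a true and standard density theorem of convex geometry (smooth bodies of positive curvature are dense, and containment can be arranged by a slight dilation since $0$ is interior; see e.g.\ Schneider, \emph{Convex Bodies: The Brunn--Minkowski Theory}, Section 3.4), so your proof is complete once you cite it instead of appealing to ``standard smoothing techniques'' — note, as you implicitly acknowledge, that the Minkowski sum $A+\eta\B{1}$ alone does not suffice, since it retains flat boundary pieces of zero curvature. What the paper's Riemann-map device buys is exactly the avoidance of this approximation theorem: $h(\B{\delta})$ automatically has analytic boundary, and the needed strict positivity comes for free from compactness of $\B{\delta}$ and convexity of $h(\mathbb{D})$; what your version buys is a more elementary, purely geometric picture that does not invoke the Riemann mapping theorem.
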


\begin{proof}
We need to slightly alter $A$ to define $\tilde{A}$. For $\epsilon > 0$ let $A_\epsilon$ denote the convex hull of the union of $A$ with the closed ball $\B{\epsilon}$. For $\epsilon$ sufficiently small we still have $E_{\Lambda_0}(A_\epsilon) \subseteq \Int(A_\epsilon)$; from now on fix such an $\epsilon$.

We recall that $\mathbb{D}$ denotes the open unit disk. By the Riemann mapping theorem there is a conformal bijection $h: \mathbb{D} \to \Int(A_\epsilon)$. Let $\delta < 1$ be sufficiently close to $1$ such that for $\tilde{A} = h(\B{\delta})$ we still have $0 \in \Int(\tilde{A})$ and $E_{\Lambda_0}(\tilde{A}) \subseteq \Int(\tilde{A})$. We will now show that $\tilde{A}$ satisfies the required properties. 

Observe that, because $\tilde{A}$ is compact, for $d$ sufficiently large $1+ \tilde{A}/d$ is contained in the right half-plane and thus $\log(1+ \tilde{A}/d)$ is well-defined. The interior of $\log(1+ \tilde{A}/d)$ is the image of $\mathbb{D}$ under the map $g(z) = \log(1+h(\delta z)/d)$. It is a result from complex analysis (see e.g.~\cite[$\S 2.5$]{Duren1983}) that the image $g(\mathbb{D})$ of a conformal map $g$ is convex if and only if for all $z \in \mathbb{D}$ we have $\Re(1 + z g''(z)/g'(z)) > 0$. We apply this to $g$:
\begin{align*}
    \inf_{z \in \mathbb{D}}\Re\left(1 + \frac{z g''(z)}{g'(z)}\right) =  \inf_{z \in \mathbb{D}}\Re\left(1 + \frac{\delta z h''(\delta z)}{h'(\delta z)} + \frac{\delta z h'(\delta z)}{h(\delta z) + d}\right)
    = \min_{z \in \B{\delta}} \Re\left(1 + \frac{z h''(z)}{h'(z)} + \frac{zh'(z)}{h(z) + d}\right).
\end{align*}
Because $h(\mathbb{D}) = A_\epsilon$ is convex there is a strictly positive lower bound on $\Re(1 + z h''(z)/h'(z))$ for $z \in \B{\delta}$. For $d$ sufficiently large, say $d \geq d_1$, it thus follows that the above infimum is strictly positive and thus $g(\mathbb{D}) = \log(1+ \tilde{A}/d)$ is convex.

Because $\tilde{A}$ is compact and contains $0$ in its interior there are $0<m<M$ such that $\B{m}\subseteq \tilde{A}\subseteq \B{M}$. For $0< r< 1$ we have $\B{r/(1+r)} \subset \mu(\B{r}) \subset \B{r/(1-r)}$. Thus, for $d$ sufficiently large, 
\[
    \B{m/(d+m)} \subseteq \mu(\tilde{A}/d) \subseteq \B{M/(d-M)}.
\]
For $w_1,w_2 \in \tilde{A}/d$ we have 
\[
    |w_1 w_2| \leq \left(\frac{M}{d-M}\right)^2,
\]
which, for say $d \geq d_2$, is less than $\frac{m}{d+m}$. We can conclude that $w_1 w_2 \in \mu(\tilde{A}/d)$ and thus $\mu(\tilde{A}/d)$ is a multiplicative semi-group for $d \geq d_2$.

Let $G_d(\Lambda,Z) = \frac{\Lambda}{(1+Z/d)^d}$. The set $\tilde{A}/d$ being strictly forward invariant for $z \mapsto \frac{\Lambda/d}{(1+z)^d}$ is equivalent to saying that $\tilde{A}$ is strictly forward invariant for $Z \mapsto G_d(\Lambda,Z)$. Let $K$ be a compact set containing $\Lambda_0$ in its interior. The maps $(\Lambda,Z) \to G_d(\Lambda,Z)$ converge uniformly on $K \times \tilde{A}$ to $(\Lambda,Z) \mapsto \Lambda e^{-Z}$. Therefore there is a neighborhood $U$ of $\Lambda_0$ and a $d_3$ such that for $
\Lambda \in U$ and $d \geq d_3$ we have that $G_d(\Lambda, \tilde{A}) \subseteq \Int(\tilde{A})$. We conclude the proof by letting $d_0 = \max\{d_1,d_2,d_3\}$.
\end{proof}

This allows us to prove Theorem~\ref{thm: limit theorem}, which we state in the following form.
\begin{theorem}
    The sets $d \cdot \mathcal{U}_{d,\geq 2}$ converge to $\mathcal{U}_\infty$ in terms of the Hausdorff distance.
\end{theorem}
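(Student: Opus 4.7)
The Hausdorff convergence amounts to showing two separate inclusions: (i) for every open $K_2 \supseteq \overline{\mathcal{U}_\infty}$, eventually $d \cdot \mathcal{U}_{d,\geq 2} \subseteq K_2$, and (ii) for every closed $K_1 \subseteq \Int(\mathcal{U}_\infty)$, eventually $K_1 \subseteq d \cdot \mathcal{U}_{d,\geq 2}$.

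Direction (i) is essentially free: every graph is a hypergraph (with all hyperedges of size $2$), so $\mathcal{U}_{d,\geq 2} \subseteq \mathcal{U}_{d,2}$ and hence $d\cdot \mathcal{U}_{d,\geq 2}\subseteq d\cdot\mathcal{U}_{d,2}$. By Theorem~\ref{thm: limit theorem graphs} the latter is contained in $K_2$ once $d$ is large enough, and the containment transfers.

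Direction (ii) is the substantive part. The plan is, for each $\Lambda_0 \in \Int(\mathcal{U}_\infty)$, to produce a compact convex set $A$ with $0 \in A$ such that $E_{\Lambda_0}(A) \subseteq \Int(A)$ (i.e.\ \emph{strictly} forward invariant). With such an $A$ in hand, Lemma~\ref{lem: rescaling conditions} produces a neighborhood $U$ of $\Lambda_0$, a region $\tilde A$ with $0 \in \Int(\tilde A)$, and a threshold $d_0$ so that for every $\Lambda \in U$ and every $d \geq d_0$ the pair $\Lambda/d$ and $\tilde A/d$ verifies the three hypotheses of Lemma~\ref{lem: properties of A}. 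Since $\tilde A$ is compact, for $d$ large enough $\tilde A/d \subseteq \{z\in\mathbb{C} : \Re(z)\geq -1/2\}$ and contains $0$, so Corollary~\ref{cor: properties imply zerofree} applies and yields $\Lambda/d \in \mathcal{U}_{d,\geq 2}$ for all $\Lambda \in U$ and $d \geq d_0$, equivalently $U \subseteq d\cdot \mathcal{U}_{d,\geq 2}$ for all such $d$. Finally, since $K_1$ is compact and covered by these neighborhoods $\{U(\Lambda_0)\}_{\Lambda_0 \in K_1}$, a finite subcover $U_1,\dots,U_k$ with thresholds $d_1,\dots,d_k$ gives $K_1 \subseteq \bigcup_i U_i \subseteq d\cdot\mathcal{U}_{d,\geq 2}$ for every $d \geq \max_i d_i$.

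The main obstacle is the upgrade from forward invariance to \emph{strict} forward invariance: the characterization from \cite{BencsBuys2021} recalled before Lemma~\ref{lem: rescaling conditions} only asserts the existence of a forward invariant compact convex set for $\Lambda \in \mathcal{U}_\infty$, whereas Lemma~\ref{lem: rescaling conditions} needs $E_{\Lambda_0}(A) \subseteq \Int(A)$. For $\Lambda_0 \in \Int(\mathcal{U}_\infty)$ this upgrade should be routine: one can, for instance, pick a slightly perturbed $\Lambda_0' \in \mathcal{U}_\infty$ lying further inside $\Int(\mathcal{U}_\infty)$, combine the corresponding forward invariant set with a small disk around $0$, and use continuity of $E_\Lambda$ in $\Lambda$ together with the compactness of the set and openness of $\Int(\mathcal{U}_\infty)$ to ensure strict invariance for $\Lambda_0$ itself. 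Once this technical point is settled, the argument above closes the proof.
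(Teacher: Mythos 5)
Your proof follows the same decomposition as the paper: the containment $\mathcal{U}_{d,\geq 2}\subseteq\mathcal{U}_{d,2}$ together with Theorem~\ref{thm: limit theorem graphs} handles the outer inclusion into $K_2$, and for $K_1$ one covers by neighborhoods $U(\Lambda_0)$ supplied by Lemma~\ref{lem: rescaling conditions} and Corollary~\ref{cor: properties imply zerofree}, then extracts a finite subcover by compactness of $K_1$. Your additional check that $\tilde A/d\subseteq\{\Re(z)\geq-1/2\}$ for $d$ large is correct and is a detail the paper leaves implicit.

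The one genuine gap is precisely the step you flag and then declare routine: upgrading a merely forward invariant compact convex set to a \emph{strictly} forward invariant one. Your sketch does not close it. From the ``iff'' characterization one gets a compact convex $A'$ with $0\in A'$ and $E_{\Lambda_0'}(A')\subseteq A'$ for, say, $\Lambda_0'=(1+\delta)\Lambda_0$, hence $E_{\Lambda_0}(A')\subseteq\tfrac{1}{1+\delta}A'$. If $0\in\partial A'$, however, $\tfrac{1}{1+\delta}A'$ is \emph{not} contained in $\Int(A')$, since both touch $\partial A'$ at $0$. Enlarging to $\mathrm{conv}(A'\cup B_\eta)$ does put $0$ in the interior, but one must then control $E_{\Lambda_0}$ on the newly added points: the natural estimate shows that $\tfrac{1}{1+\delta}A'$ sits at distance about $\eta\,\delta/(1+\delta)$ from the boundary of the enlarged set, while the image of the extra $\eta$-fringe moves by about $L\eta$ where $L$ is the Lipschitz constant of $E_{\Lambda_0}$ near $A'$, and there is no reason to have $L<\delta/(1+\delta)<1$ in general, since $|E_{\Lambda_0}'(z)|=|\Lambda_0 e^{-z}|$ can be large on $A'$. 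The paper does not attempt this upgrade from the weak statement in the paragraph preceding Lemma~\ref{lem: rescaling conditions}; in the actual proof it invokes \cite[Corollary 4.2 and Lemma 4.4]{BencsBuys2021} directly, which produce, for every $\Lambda_0\in\Int(\mathcal{U}_\infty)$, a compact convex set containing $0$ that is already \emph{strictly} forward invariant for $E_{\Lambda_0}$. Citing those results in place of the perturbation sketch is what completes the argument.
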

\begin{proof}
Take a closed $K_1 \subseteq \Int(\mathcal{U}_\infty)$ and open $K_2 \supseteq \overline{\mathcal{U}_\infty}$. 

It follows from Theorem~\ref{thm: limit theorem graphs} that for $d$ sufficiently large $d \cdot \mathcal{U}_{d,2} \subseteq K_2$. Because $\mathcal{U}_{d, \geq 2} \subseteq \mathcal{U}_{d, 2}$ by definition, the same is true for $\mathcal{U}_{d, \geq 2}$.

Take a $\Lambda_0 \in K_1$. It follows from \cite[Corollary 4.2 and Lemma 4.4]{BencsBuys2021} that there is a compact convex set containing $0$ that is strictly forward invariant for $E_{\Lambda_0}$. It follows from Lemma~\ref{lem: rescaling conditions} that there is a neighborhood $U(\Lambda_0)$ of $\Lambda_0$, a region $\tilde{A}(\Lambda_0)$ and a $d_0(\Lambda_0)$ such that for $\Lambda \in U(\Lambda_0)$ and $d \geq d_0(\Lambda_0)$ the pair $\Lambda/d$ and $\tilde{A}(\Lambda)/d$ satisfy the conditions of Lemma~\ref{lem: properties of A}. It follows then from Corollary~\ref{cor: properties imply zerofree} that for those $d$ we have $\Lambda/d \in \mathcal{U}_{d,\geq 2}$, i.e. $U(\Lambda_0) \subseteq d \cdot \mathcal{U}_{d,\geq 2}$.

The sets $\{U(\Lambda)\}_{\Lambda \in K_1}$ form an open cover of $K_1$. Because $\mathcal{U}_\infty$ is bounded the set $K_1$ is compact and thus there is a finite $I \subseteq K_1$ such that $\{U(\Lambda)\}_{\Lambda \in I}$ is a cover of $K_1$. Then for $d \geq \max_{\Lambda \in I} d_0(\Lambda)$ we can conclude that $K_1 \subseteq d \cdot \mathcal{U}_{d,\geq 2}$.
\end{proof}

\section{The maximal zero-free disk for bounded degree uniform linear hypertrees}\label{sec: disk for linear hypertrees}

In this section we will prove Theorem~\ref{thm: Zero free linear (k)-uniform hypertrees}. Because $b,d$ will be more or less fixed we will drop the subscripts and write 
\begin{equation}
    \label{eq: flambda hypertrees}
    f_{\lambda}(z) = \lambda \cdot \left[1 - \left(\frac{z}{1+z}\right)^b \right]^d
\end{equation}
for the remainder of this section.


\subsection{Preliminaries on complex dynamics}
In this section we gather the required background on the theory of dynamics of rational maps. The definitions and general results from this section can be found in \cite{MilnorDynamics,McMullen1994}. We recall that $\Chat = \mathbb{C} \cup \{\infty\}$ denotes the Riemann sphere.  

\subsubsection{Critical points and holomorphic families of rational maps}
A point $z \in \Chat$ is called a \emph{critical point} of a rational map $g: \Chat \to \Chat$ if $g$ is not locally injective around $z$, i.e. if there does not exist a neighborhood $U$ of $z$ such that $g|_{U}$ is injective. A \emph{holomorphic family of rational maps} parameterized by an open set $X \subseteq \Chat$ is a holomorphic map $g: X \times \Chat \to \Chat$ such that for any fixed $\lambda_0 \in X$ the map $z \mapsto g(\lambda_0,z)$ is rational. We usually denote $g(\lambda,z)$ by $g_\lambda(z)$. Note that $f$ as defined in (\ref{eq: flambda hypertrees}) is a holomorphic family of rational maps for any open $X \subseteq \Chat$. We say that the critical points of a holomorphic family of rational maps \emph{move holomorphically} around $\lambda_0$ if there is a neighborhood $U \subset X$ of $\lambda_0$ and holomorphic maps $c_{1}, \dots, c_N$ from $U \to \Chat$ such that for any $\lambda \in U$ the critical points of $g_\lambda$ are $\{c_1(\lambda), \dots, c_N(\lambda)\}$.

For any map $g: \Chat \to \Chat$ we denote by $g^n$ the map that applies $g$ successively $n$ times. The set $\{g^n(z_0)\}_{n \geq 1}$ is called the \emph{forward orbit} of $z_0$. We shall see that the forward orbits of the critical points play an important role in understanding the dynamic behaviour of $g$. We first prove a lemma about the critical orbits of our map $f$ (as defined in (\ref{eq: flambda hypertrees})). The lemma says that there is essentially a unique critical orbit.

\begin{lemma}
    \label{lem: critical orbits}
    Around any $\lambda_0 \in \mathbb{C}\setminus\{0\}$ the critical points of $f_{\lambda}$ move holomorphically. In fact there are holomorphic functions $c_1, \dots, c_N$ parameterizing the critical points on $\mathbb{C}\setminus\{0\}$. Moreover, for any critical point $c_i$
    \[
        f^{n_i}_{\lambda}(c_i(\lambda)) = \lambda
    \]
    for some $n_i \in \{1,2,3\}$.
\end{lemma}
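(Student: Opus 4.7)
The plan is to compute the critical set of $f_\lambda$ explicitly, observe that every critical point is independent of $\lambda$, and then verify the orbit claim by direct substitution. Writing $u=\mu(z)=z/(1+z)$, differentiation gives
\[
f_\lambda'(z) \;=\; -\lambda\,bd\,u^{b-1}(1-u^b)^{d-1}\,(1+z)^{-2}.
\]
In the finite plane outside the pole $z=-1$, the zeros of $f_\lambda'$ are exactly $z=0$ (of multiplicity $b-1$, present only when $b\ge 2$) together with the $b-1$ points $z=\omega/(1-\omega)$ where $\omega^b=1$ and $\omega\neq 1$ (each of multiplicity $d-1$). A local-coordinate computation shows that $\infty$ is critical of multiplicity $d-1$ (since $f_\lambda(z)\sim\lambda b^d z^{-d}$ as $z\to\infty$) and that the pole $z=-1$ is critical of multiplicity $db-1$ (since its local degree equals $db$). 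These contributions sum to
\[
(b-1)+(b-1)(d-1)+(d-1)+(db-1) \;=\; 2(db-1),
\]
which equals $2(\deg f_\lambda-1)$, so by Riemann--Hurwitz this list is exhaustive.

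Since none of these critical points depend on $\lambda$, I define each $c_i$ to be the corresponding constant function on $\mathbb{C}\setminus\{0\}$; these are trivially holomorphic and give the entire critical set of $f_\lambda$ for every $\lambda\neq 0$, establishing the holomorphic motion part globally on $\mathbb{C}\setminus\{0\}$. For the orbit claim I evaluate each case: $f_\lambda(0)=\lambda$, giving $n_i=1$; when $\omega^b=1$ with $\omega\neq 1$, the point $z=\omega/(1-\omega)$ has $u=\omega$, so $1-u^b=0$ and it maps first to $0$ and then to $\lambda$, giving $n_i=2$; the point $\infty$ has $u=1$, so it maps to $\lambda(1-1)^d=0$ and then to $\lambda$, giving $n_i=2$; and $-1$ is a pole, so it maps to $\infty$ and then follows the previous orbit to reach $\lambda$ at the third step.

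The only genuine subtlety, which I do not expect to cause difficulty, is the bookkeeping in the degenerate cases $b=1$ (no root-of-unity critical points and $z=0$ not critical) and $d=1$ (the contributions of $\infty$ and the $\omega$-points collapse to zero), where one should verify that the surviving multiplicities still sum to $2(db-1)$; the local-degree computations at $\infty$ and $z=-1$ via the substitution $w=1/z$ are routine but must be done carefully. Everything else is immediate.
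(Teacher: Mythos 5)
Your proof is correct, and its second half (checking that each critical point reaches $\lambda$ in at most three steps, and that the critical set is independent of $\lambda$, so the parameterizing functions are constants) is the same as the paper's. Where you genuinely differ is in how the critical set is identified: you compute $f_\lambda'$ explicitly, work out the local degrees at $z=-1$ and $z=\infty$, and confirm exhaustiveness by matching the total multiplicity $2(bd-1)$ against Riemann--Hurwitz, whereas the paper simply writes $f_\lambda$ as a composition of the M\"obius maps $\mu$, $z\mapsto 1-z$, $z\mapsto \lambda z$ with the power maps $z\mapsto z^b$ and $z\mapsto z^d$, and reads off that the critical points are exactly those $z$ with $\mu(z)\in\{0,\infty\}$ or $1-\mu(z)^b\in\{0,\infty\}$ --- no derivative, degree count, or Riemann--Hurwitz is needed (and exhaustiveness in your version already follows from your factorization of $f_\lambda'$ together with the local analysis at $-1$ and $\infty$, so the Riemann--Hurwitz check is really a consistency check rather than a necessary step). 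Your route buys the multiplicities of the critical points, which the paper does not record and does not need, at the cost of the bookkeeping you flag; note that the degenerate case $d=1$ never arises, since $d\geq 2$ is a standing assumption, and your treatment of $b=1$ (where $z=0$ is not critical because the factor $u^{b-1}$ is absent) is correct and in fact slightly sharper than the paper's criterion, which for $b=1$ would nominally include $z=0$ --- harmlessly so, since $f_\lambda(0)=\lambda$ in any case.
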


\begin{proof}

The maps $z \mapsto \lambda z$, $z\mapsto 1-z$ and $z \mapsto z/(1+z)$ are locally injective on the whole Riemann sphere. The maps $z \mapsto z^b$ and $z\mapsto z^d$ are locally injective anywhere except for $z \in \{0,\infty\}$. Therefore the critical points of $f_\lambda$, as long as $\lambda$ is nonzero, are given by those $z$ for which either
\[
    \frac{z}{1+z} \in \{0, \infty\} 
    \quad 
    \text{ or }
    \quad 
    1-\left(\frac{z}{1+z}\right)^b \in \{0, \infty\}.
\]
Since these do not depend on $\lambda$ we can conclude that the critical points do indeed move holomorphically on $\mathbb{C}\setminus\{0\}$; they are in fact constant. Moreover observe that
\begin{itemize}
    \item
    if $\frac{z}{1+z} = 0$ then $f_\lambda(z) = \lambda$;
    \item
    if $1-\left(\frac{z}{1+z}\right)^b = 0$ then $f^2_\lambda(z) = \lambda$;
    \item 
    if $\frac{z}{1+z} = \infty$, equivalently if $1-\left(\frac{z}{1+z}\right)^b = \infty$, then $f^3_{\lambda}(z) = \lambda$.
\end{itemize}
\end{proof}

\subsubsection{Normality}
Let $U$ be an open subset of $\Chat$ and let $\mathcal{F}$ be a set of holomorphic maps $g: U \to \Chat$. In this context $\mathcal{F}$ is usually referred to as a family of holomorphic maps. The family $\mathcal{F}$ is called a \emph{normal family} if every sequence $\{g_n\}_{n \geq 1} \subseteq \mathcal{F}$ has a subsequence that converges uniformly on compact subsets of $U$. Given a particular parameter $\lambda_0 \in U$ we say that $\mathcal{F}$ is \emph{normal at $\lambda_0$} if there exists an open neighborhood of $\lambda_0$ on which $\mathcal{F}$ is a normal family.

\begin{theorem}[Montel's Theorem]
\label{thm: Montel's theorem}
Let $\mathcal{F}$ be a family of holomorphic maps $U \mapsto \Chat$ for some open $U \subseteq \Chat$ and let $h_1,h_2,h_3$ be holomorphic maps $U \to \Chat$ such that $h_1(\lambda)$, $h_2(\lambda)$ and $h_3(\lambda)$ are pairwise distinct for all $\lambda \in U$. If $g(\lambda) \not \in \{h_1(\lambda),h_2(\lambda),h_3(\lambda)\}$ for all $g \in \mathcal{F}$ and $\lambda \in U$ then $\mathcal{F}$ is a normal family.
\end{theorem}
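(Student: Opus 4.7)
The plan is to reduce to the classical Montel theorem (where the omitted values are the fixed constants $0, 1, \infty$) by means of a holomorphically varying Möbius transformation. Since normality is a local property, it suffices to check normality at each $\lambda_0 \in U$. Around any such $\lambda_0$ one may take a sufficiently small neighborhood $V$ where the three maps $h_1, h_2, h_3$ stay pairwise distinct, and construct a family of Möbius transformations $M_\lambda$ sending $h_1(\lambda), h_2(\lambda), h_3(\lambda)$ to $0, 1, \infty$ respectively. Explicitly, using the cross-ratio one can take
\[
M_\lambda(z) = \frac{(z - h_1(\lambda))(h_2(\lambda) - h_3(\lambda))}{(z - h_3(\lambda))(h_2(\lambda) - h_1(\lambda))},
\]
modified in the obvious way if one of the $h_i(\lambda_0)$ equals $\infty$. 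Distinctness of the $h_i(\lambda)$ on $V$ ensures that $M_\lambda$ and its inverse $M_\lambda^{-1}$ are well-defined and depend holomorphically on $\lambda$.

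Next, I would introduce the transformed family $\tilde{\mathcal{F}} = \{\tilde{g} : \tilde{g}(\lambda) := M_\lambda(g(\lambda)),\ g \in \mathcal{F}\}$ of holomorphic maps $V \to \Chat$. By construction each element of $\tilde{\mathcal{F}}$ omits the three values $0, 1, \infty$. The classical Montel theorem states that any family of holomorphic maps from an open subset of $\Chat$ into $\Chat\setminus\{0,1,\infty\}$ is normal, so $\tilde{\mathcal{F}}$ is normal at $\lambda_0$. Since $M_\lambda^{-1}$ varies holomorphically in $\lambda$ and acts continuously on $\Chat$ in the spherical metric, composition with $M_\lambda^{-1}$ preserves uniform convergence on compact subsets. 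Hence any sequence in $\mathcal{F}$ yields, after passing to a subsequence on which the associated sequence in $\tilde{\mathcal{F}}$ converges, a subsequence converging uniformly on compact subsets of $V$. This is normality of $\mathcal{F}$ at $\lambda_0$.

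The main obstacle is the classical Montel theorem itself, for which I would just cite a standard reference. The standard proof uses the fact that $\Chat \setminus \{0, 1, \infty\}$ is a hyperbolic Riemann surface whose universal cover is the upper half-plane (via the modular $\lambda$-function). Each element of $\tilde{\mathcal{F}}$ then lifts locally to a holomorphic map into the upper half-plane, and by the Schwarz--Pick lemma these lifts are distance-decreasing for the Poincaré metric. This yields equicontinuity with respect to the spherical metric, and the Arzelà--Ascoli theorem concludes the proof. An alternative route uses Zalcman's rescaling lemma together with Picard's theorem to avoid direct use of the modular function. Either way, the reduction above makes this classical result directly applicable to the more flexible setting with three holomorphically moving omitted values.
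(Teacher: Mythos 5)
Your proposal is correct and is essentially the same argument the paper gives (in the remark following the theorem): conjugate the family by a holomorphically varying M\"obius transformation sending $(h_1(\lambda),h_2(\lambda),h_3(\lambda))$ to $(0,1,\infty)$ and invoke the classical Montel theorem for families omitting three fixed values. Your extra details — the explicit cross-ratio formula, the local choice of neighborhood, and the sketch of the classical theorem's proof via the hyperbolic metric — only elaborate on what the paper leaves implicit.
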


\begin{remark}
This theorem is usually stated with $h_1,h_2$ and $h_3$ being constant, say $0,1$ and $\infty$. By considering the family $\mathcal{F}' = \{\lambda \mapsto (\mu_\lambda \circ g)(\lambda): g \in \mathcal{F}\}$, where $\mu_\lambda$ is a M\"obius tranformation sending $(h_1(\lambda),h_2(\lambda),h_3(\lambda))$ to $(0,1,\infty)$, we see that $\mathcal{F}$ is a normal family if and only if $\mathcal{F}'$ is a normal family. Furthermore $\mathcal{F}$ avoids $\{h_1(\lambda),h_2(\lambda),h_3(\lambda)\}$ if and only if $\mathcal{F}'$ avoids $\{0,1,\infty\}$. The two statements are therefore equivalent.
\end{remark}

\subsubsection{Periodic points}

A point $z_0 \in \Chat$ is called a \emph{periodic fixed point} of a rational map $g: \Chat \to \Chat$ if there is an $n \geq 1$ such that $g^n(z_0)=z_0$. The least $n$ for which this is the case is called the \emph{period} of $z_0$. If $n=1$ then $z_0$ is called a \emph{fixed point} of $g$. The \emph{multiplier} of a periodic point $z_0$ with period $n$ is defined as $(g^n)'(z_0)$, i.e. the derivative of $f^n$ evaluated at $z_0$.\footnote{If the orbit of $z_0$ passes through $\infty$ the multiplier can be calculated by conjugating $g$ with a chart that moves the orbit to $\mathbb{C}$.} 

\begin{lemma}
    \label{lem: fixed points flambda}
    The map $f_{\lambda}$ has a fixed point with multiplier $\alpha$ if and only if $\lambda = (1-w^b)^{-d}\cdot\frac{w}{1-w}$, where $w \neq 1$ is a solution to 
    \[
        -bd \cdot \frac{w^b(1-w)}{1-w^b} = \alpha.
    \]
\end{lemma}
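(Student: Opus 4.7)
The plan is to carry out a direct computation after a change of variables that linearizes the recursion. First I would substitute $w = \mu(z) = z/(1+z)$, which has inverse $z = w/(1-w)$ and maps $\Chat$ bijectively to $\Chat$ sending $\infty \mapsto 1$ and $-1 \mapsto \infty$. Since $\infty$ is never fixed by $f_\lambda$ (as $f_\lambda(\infty)=\lambda \cdot 0 = 0$) and $-1$ is a pole, all fixed points lie in $\mathbb{C}\setminus\{-1\}$, which corresponds exactly to $w \in \mathbb{C}\setminus\{1\}$. In these coordinates the fixed-point equation $z = f_\lambda(z)$ becomes
\[
  \frac{w}{1-w} = \lambda\,(1-w^b)^d,
\]
which we solve for $\lambda$ to get the stated formula $\lambda = (1-w^b)^{-d}\,w/(1-w)$.

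Next I would compute $f_\lambda'$ by the chain rule. Writing $f_\lambda(z) = \lambda(1-\mu(z)^b)^d$ and using $\mu'(z) = 1/(1+z)^2$, I obtain
\[
  f_\lambda'(z) = -\lambda\, bd\, \mu(z)^{b-1}\,(1-\mu(z)^b)^{d-1}\,\frac{1}{(1+z)^2}.
\]
At the fixed point corresponding to $w$ we have $\mu(z)=w$ and $1+z = 1/(1-w)$, so
\[
  f_\lambda'(z) = -\lambda\, bd\, w^{b-1}\,(1-w^b)^{d-1}\,(1-w)^2.
\]
Substituting the value of $\lambda$ found above, the factors of $(1-w^b)$ and $(1-w)$ collapse and I get
\[
  f_\lambda'(z) = -bd \cdot \frac{w^b(1-w)}{1-w^b},
\]
which is exactly the claimed expression for the multiplier $\alpha$.

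This handles both directions of the equivalence: every fixed point of $f_\lambda$ determines a unique $w \neq 1$ via $w = \mu(z)$ satisfying the two displayed equations, and conversely any $w \neq 1$ satisfying the multiplier equation with the corresponding $\lambda$ gives a bona fide fixed point $z = w/(1-w)$ with multiplier $\alpha$. I do not anticipate a real obstacle here: the argument is essentially a bookkeeping calculation, and the only subtlety is verifying that fixed points at $\infty$ and poles at $-1$ cannot arise, which is immediate from the form of $f_\lambda$. The degenerate cases where $w^b = 1$ with $w \neq 1$ (corresponding to $\lambda = \infty$) lie outside the range of the statement and can be safely ignored.
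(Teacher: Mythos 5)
Your proposal is correct and follows essentially the same route as the paper: both note that $\infty$ is never fixed, solve $f_\lambda(z)=z$ for $\lambda$, compute $f_\lambda'$ at the fixed point, and substitute $z = w/(1-w)$ (equivalently $w=\mu(z)$) to obtain $\alpha = -bd\,w^b(1-w)/(1-w^b)$; the only cosmetic difference is that the paper eliminates $\lambda$ via $f_\lambda(z)=z$ in the derivative formula, whereas you substitute the explicit expression for $\lambda$, which is the same manipulation. Your handling of the excluded points $z=-1$, $z=\infty$ and the degenerate case $w^b=1$ is fine.
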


\begin{proof}
Note that $z$ is a fixed point of $f_{\lambda}$ if and only if 
\[
f_{\lambda}(z) = z
\quad
\iff
\quad 
\lambda = z \cdot \left[1 - \left(\frac{z}{1+z}\right)^b \right]^{-d}.
\]
Furthermore, this fixed point has multiplier $\alpha$ if and only if 
\[
    \alpha = f_\lambda'(z) =  -bd \frac{z^{b-1}}{(1+z)
   \left((1+z)^b-z^b\right)}\cdot f_{\lambda}(z) = -db\frac{\left(\frac{z}{1+z}\right)^b}{(1+z)\left(1-\left(\frac{z}{1+z}\right)^b\right)}.
\]
Note that $f_{\lambda}(\infty) = 0$ irregardless of $\lambda$ and thus $\infty$ is never a fixed point. Therefore, for any fixed point $z$, we can write $z = \frac{w}{1-w}$ for some $w \neq -1$. The result follows by making this substitution. 
\end{proof}



A periodic fixed point with multiplier $\alpha$ is called either \emph{attracting}, \emph{indifferent} or \emph{repelling} according to whether $|\alpha| < 1$, $|\alpha| = 1$ or $|\alpha| > 1$ respectively. A periodic point $z_0$ of $g_{\lambda_0}$, where $g$ is a holomorphic family of rational maps, is called \emph{persistently indifferent} if there is a neighborhood $U$ of $\lambda_0$ and a holomorphic map $w: U \to \Chat$ such that $w(\lambda_0) = z_0$ and $w(\lambda)$ is an indifferent periodic fixed point of $g_\lambda$ for every $\lambda \in U$.

\begin{lemma}
    \label{lem: no pers. indiff.}
    The map $f_\lambda$ has no persistently indifferent periodic points for any $\lambda \in \mathbb{C}$.
\end{lemma}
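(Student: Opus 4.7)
My plan is to reduce persistent indifference to the existence of an algebraic curve of periodic points along which the multiplier is a constant of modulus one, and then contradict this by analysing the degeneration $\lambda\to 0$, where $f_0\equiv 0$ forces the multiplier to vanish.

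Suppose $f_{\lambda_0}$ has a persistently indifferent periodic point $z_0$ of period $n$; let $w:U\to\Chat$ be the local holomorphic continuation and $\alpha(\lambda)=(f_\lambda^n)'(w(\lambda))$ the multiplier. Since $\alpha$ is holomorphic and $|\alpha|\equiv 1$ on the open set $U$, the open mapping principle forces $\alpha\equiv\alpha_0$ for some $\alpha_0$ with $|\alpha_0|=1$. The pair $(\lambda,w(\lambda))$ thus lies on an irreducible component $V'$ of the algebraic set $V_n=\{(\lambda,z):f_\lambda^n(z)=z\}$ on which the rational function $\mu(\lambda,z)=(f_\lambda^n)'(z)$ is identically $\alpha_0$.

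For $n=1$ this contradicts Lemma~\ref{lem: fixed points flambda} immediately: that lemma gives a rational parameterization $w\mapsto(\lambda(w),w)$ of the fixed-point locus under which $\mu$ becomes the non-constant rational function $-bd\,w^b(1-w)/(1-w^b)$, so constancy of $\mu$ would force $w$ into a finite set and then $\lambda(w)$ into a finite set, contradicting the positive-dimensional component. For $n\geq 2$ I would argue via the degeneration $\lambda\to 0$. The component $V'$ cannot be contained in a single fibre $\{\lambda=\lambda_*\}$ (else $f_{\lambda_*}^n$ would fix infinitely many points), so its projective closure meets $\{\lambda=0\}$ in at least one point. Working in a local parameter on the normalization of $V'$ at such a point, the orbit points $z_i(\lambda)=f_\lambda^i(w(\lambda))$ extend meromorphically and hence have well-defined limits in $\Chat$. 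Using $f_0\equiv 0$, $f_\lambda(\infty)=0$, and the analysis of $f_\lambda$ near its pole $z=-1$, one verifies that the only possible orbit-limit is the fixed orbit $\{0\}$: each alternative limit ($-1$, $\infty$, or a finite nonzero value) leads, after at most two iterates, to a point converging to $0$, which prevents the orbit from closing up anywhere else. Since $f_\lambda'(z)=\lambda h'(z)$ for $h(z)=(1-(z/(1+z))^b)^d$, we then have
\[
    \alpha(\lambda)=\lambda^n\prod_{i=0}^{n-1}h'(z_i(\lambda))\longrightarrow 0
\]
along this branch, contradicting $|\alpha_0|=1$.

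The main difficulty lies in the case analysis for $n\geq 2$: one must rule out ``delicate'' branches where some orbit point $z_i(\lambda)$ approaches the pole $z=-1$ at a rate tuned so that the divergence of $h'$ exactly cancels the $\lambda^n$ prefactor. Controlling this amounts to showing that once an orbit point is near $-1$, the next two iterates force subsequent points into a small neighbourhood of $0$, after which the orbit remains trapped there and cannot return to $-1$ to close up. With that verified, the contradiction from the displayed limit is immediate, and the lemma follows.
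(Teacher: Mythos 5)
Your overall strategy — a constant-multiplier algebraic curve of periodic points, degenerated at $\lambda\to 0$ — is genuinely different from the paper's proof, which instead shows that $f_\lambda$ is \emph{hyperbolic} on an open set of parameters (because all critical orbits pass through $\lambda$ by Lemma~\ref{lem: critical orbits}, and one of them must converge to the attracting fixed point given by Lemma~\ref{lem: fixed points flambda}), then appeals to the general fact, cited from the literature, that a persistently indifferent cycle would force an indifferent cycle for all but finitely many $\lambda$. Your $n=1$ argument via the rational parameterization of Lemma~\ref{lem: fixed points flambda} is correct, and your framing of the algebraic component $V'$, its projective closure, and the normalization is also fine.

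However, the $n\geq 2$ case has a concrete gap. You claim that the limit of the periodic orbit at $\lambda=0$ must be the constant orbit $\{0\}$, on the grounds that a limit at $-1$ or $\infty$ is absorbed to $0$ ``after at most two iterates.'' This is not true. If $z_{i-1}(\lambda)\to -1$, the next limit $\lim f_\lambda(z_{i-1}(\lambda))$ is \emph{unconstrained}: because $h$ has a pole of order $bd$ at $-1$, choosing $|z_{i-1}+1|\sim |\lambda|^{1/bd}$ makes $\lambda h(z_{i-1})$ converge to any prescribed value, including $-1$ again. In fact, tracing the constraint ``$\zeta_j\neq 0\Rightarrow\zeta_{j-1}=-1$'' around the cycle shows that the only two possible orbit-limits are \emph{all} $\zeta_i=0$ or \emph{all} $\zeta_i=-1$; the all-$(-1)$ case is exactly the ``delicate branch'' you flag, and your proposed fix (``after two iterates subsequent points are near $0$'') does not apply to it. Your displayed limit $\alpha(\lambda)=\lambda^n\prod h'(z_i(\lambda))\to 0$ is only valid in the all-$0$ case. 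The good news is that the all-$(-1)$ case still yields a contradiction, but of the opposite sign: on that branch $u_i:=z_i+1$ all vanish to the same order $t^\delta$ in a local parameter with $\lambda\sim t^{bd\delta}$, and since $h'$ has a pole of order $bd+1$ at $-1$, one computes $\alpha\sim c\,t^{nbd\delta-n(bd+1)\delta}=c\,t^{-n\delta}\to\infty$, again incompatible with $|\alpha|\equiv 1$. So the degeneration strategy can be completed, but you must replace the incorrect ``absorbed to $0$'' claim by the dichotomy ``orbit-limit is all $0$ or all $-1$'' and show the multiplier diverges (rather than vanishes) in the second case.
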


\begin{proof}
It follows from Lemma~\ref{lem: fixed points flambda} that there is a non-empty open set $U \subseteq \mathbb{C}$ such that $f_{\lambda}$ has an attracting fixed point for all $\lambda \in U$. Fix any nonzero $\lambda_0 \in U$, let $p$ denote an attracting fixed point of $f_{\lambda_0}$, and let $c_1, \dots, c_N$ denote the critical points of $f_{\lambda_0}$. At least one of the critical orbits $\{f^n_{\lambda_0}(c_i)\}_{n \geq 1}$ converges to $p$; see e.g. \cite[Theorem 8.6]{MilnorDynamics}. It then follows from Lemma~\ref{lem: critical orbits} that the orbit of any critical point converges to $p$. This means that $f_{\lambda_0}$ is a hyperbolic map; see e.g. \cite[\S 19]{MilnorDynamics}. One property of hyperbolic maps is that every periodic orbit must be either attracting or repelling, i.e. not indifferent. It is not hard to see that if $f_\lambda$ were to have a persistently indifferent fixed point for some $\lambda$, then it has to have an indifferent fixed point for all but finitely many $\lambda \in \mathbb{C}$; this is proved e.g. in \cite[Lemma 9]{buys_cayleytrees}. Because we just observed that $f_\lambda$ is hyperbolic for $\lambda$ in an open set, we can conclude that $f_{\lambda}$ cannot have a persistently indifferent fixed point for any $\lambda$.
\end{proof}

\begin{theorem}[Part of Theorem 4.2 in \cite{McMullen1994}]
			\label{thm: Hol Motion}
			Let $g$ be a holomorphic family of rational maps parameterized by $U$. Suppose 
			there exist holomorphic maps $c_i: U \to \Chat$ parameterizing the critical points of 
			$g$. Let $\lambda_0 \in U$ and suppose that for all $i$ the families of maps given by
				\[
					\mathcal{F}_i = \{\lambda \mapsto g^n_\lambda(c_i(\lambda))\}_{n \geq 1}
				\]
			are normal at $\lambda_0$. Then there is a neighborhood $V \subseteq U$ of $\lambda_0$ such that 
			for all $\lambda \in V$ every periodic point of $g_\lambda$ is either attracting, repelling or 
			persistently indifferent.
		\end{theorem}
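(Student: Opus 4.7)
Since the statement is quoted from McMullen's monograph, I would essentially follow the Ma\~n\'e--Sad--Sullivan $J$-stability paradigm. The plan is to argue by contradiction: assume there exists a periodic point $z_0$ of $g_{\lambda_0}$ of period $n$ which is neither attracting, nor repelling, nor persistently indifferent, and derive a failure of normality of one of the families $\mathcal{F}_i$.

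First I would track the periodic point holomorphically. Periodic points of period $n$ are roots of the algebraic equation $g^n_\lambda(z)-z=0$. Provided the multiplier $\mu_0=(g^n_{\lambda_0})'(z_0)$ is not equal to $1$, the implicit function theorem produces a holomorphic function $z(\lambda)$ with $z(\lambda_0)=z_0$ on a neighborhood $V$ of $\lambda_0$; the multiplier function $\mu(\lambda)=(g^n_\lambda)'(z(\lambda))$ is then holomorphic. The parabolic edge case $\mu_0=1$ can be handled by standard Weierstrass preparation applied to $g^n_\lambda(z)-z$, yielding a branched continuation that is enough for the argument (after possibly passing to a branched cover of $V$). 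Since $z_0$ is by assumption not repelling or attracting, $|\mu(\lambda_0)|=1$; since it is not persistently indifferent, $\mu$ is non-constant; hence by the open mapping theorem there is an open set $V_-\subset V$ on which $|\mu(\lambda)|<1$ and an open set $V_+\subset V$ on which $|\mu(\lambda)|>1$.

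Next I would bring in the critical orbits. On $V_-$ the point $z(\lambda)$ is attracting of period $n$, so by Fatou's classical theorem its immediate basin contains at least one critical point. As there are only finitely many critical points $c_1,\dots,c_N$ (all moving holomorphically by hypothesis), a pigeonhole argument together with openness yields an index $i$ and an open subset $V'_-\subseteq V_-$ such that $c_i(\lambda)$ lies in the immediate basin of $z(\lambda)$ for every $\lambda\in V'_-$. On $V'_-$ we therefore have locally uniform convergence
\[
    g^{nk}_\lambda(c_i(\lambda))\longrightarrow z(\lambda)\qquad\text{as }k\to\infty.
\]
By hypothesis the family $\mathcal{F}_i$ is normal at $\lambda_0$; passing to a subsequence along multiples of $n$, we extract $g^{nk_j}_\lambda(c_i(\lambda))\to H(\lambda)$ locally uniformly on a neighborhood $V''\subseteq V$ of $\lambda_0$. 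Since $H=z$ on $V'_-$ and both are holomorphic, the identity theorem forces $H\equiv z$ on $V''$.

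Finally I would extract the contradiction at a parameter in $V_+\cap V''$. Fixing $\lambda_2\in V_+\cap V''$, the point $z(\lambda_2)$ is a repelling periodic point of $g_{\lambda_2}$ of period $n$ with multiplier $|\mu(\lambda_2)|>1$. But a repelling fixed point cannot be the limit of an orbit other than its own preimages: locally $g^n_{\lambda_2}$ is topologically conjugate to $w\mapsto\mu(\lambda_2)w$ near $0$, so any point close to $z(\lambda_2)$ escapes a small linearizing neighborhood after finitely many iterates of $g^n$. Combined with $g^{nk_j}_{\lambda_2}(c_i(\lambda_2))\to z(\lambda_2)$, this forces $g^{nk_j}_{\lambda_2}(c_i(\lambda_2))=z(\lambda_2)$ for all large $j$, which in turn forces $z$ to be identically constant along the subsequence and contradicts the fact that iterates of the critical point under a repelling linearization separate. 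This contradiction shows $z_0$ must have been attracting, repelling, or persistently indifferent. The main obstacle is making the last step precise around parabolic parameters where $\mu(\lambda_0)$ is a root of unity: there one has to complement the linearization argument with the Leau--Fatou flower theorem to rule out convergence of critical orbits to the parabolic cycle at $\lambda_2$, and this is where the bulk of the technical work lies.
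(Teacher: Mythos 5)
The paper does not prove this statement at all: it is quoted from McMullen's Theorem 4.2 and used as a black box, so your attempt can only be measured against the standard Ma\~n\'e--Sad--Sullivan/McMullen argument. Your overall framework is the right one: track the periodic point holomorphically, observe that non-persistence forces the multiplier $\mu(\lambda)$ to be non-constant, use Fatou's theorem on the set where the cycle is attracting to capture a critical point, and use normality plus the identity theorem to propagate the limit of the critical orbit functions. (A small ordering issue: you must choose the attracting parameter region inside the neighbourhood on which the extracted subsequence converges; this is harmless since parameters with $|\mu|<1$ accumulate at $\lambda_0$. The parabolic case $\mu(\lambda_0)=1$, which you delegate to Weierstrass preparation, is also stated rather than argued, but that is secondary.)

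The final step, however, contains a genuine gap. Normality only gives convergence of $\lambda \mapsto g^{nk_j}_\lambda(c_i(\lambda))$ along a subsequence, and a subsequence of an orbit can accumulate on a repelling periodic point without the orbit ever landing on it (critical orbits that are recurrent in the Julia set do exactly this); your ``escapes a small linearizing neighborhood after finitely many iterates'' argument fails because the iterates between $k_j$ and $k_{j+1}$ may leave and re-enter that neighborhood. This particular defect is repairable by the sub-subsequence trick: every subsequence has a further subsequence whose limit must equal $z$ by the identity theorem, hence the full sequence $g^{nk}_\lambda(c_i(\lambda)) \to z(\lambda)$ locally uniformly. But even with that repair, the conclusion you then draw at $\lambda_2$ --- that the critical orbit eventually lands exactly on the repelling cycle --- is not by itself a contradiction: critical orbits can legitimately land on repelling cycles (Misiurewicz-type relations), and your closing sentence about $z$ being ``identically constant along the subsequence'' does not resolve into an argument. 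To finish one needs a further idea, for instance upgrading the pointwise landing, via Baire category over the open set of repelling parameters and the identity theorem, to a persistent relation $g^{nK}_\lambda(c_i(\lambda)) \equiv z(\lambda)$ and then disposing of that case (or restarting with a different critical point), or following the classical route, which at this juncture instead perturbs the neutral multiplier to roots of unity and uses parabolic bifurcation to create attracting cycles of arbitrarily high period that the finitely many normal critical orbit families cannot all service. As written, the decisive contradiction is missing.
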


\subsection{Accumulation of roots}
We will use the results gathered in the previous section to show that parameters $\lambda$ for which $f_{\lambda}$ (as defined in (\ref{eq: flambda hypertrees})) has an indifferent fixed point are accumulation points of zeros.

\begin{lemma}
    \label{lem: ratio -1 implies zero}
    Suppose $\lambda_0 \in \mathbb{C}$ and $n \in \mathbb{Z}_{\geq 1}$ are such that $f_{\lambda_0}^n(\lambda_0) = -1$ then there is a $(b+1)$-uniform linear hypertree $\mathcal{T}$ with degree at most $d+1$ and $Z_\mathcal{T}(\lambda_0) = 0$. 
\end{lemma}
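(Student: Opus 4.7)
The plan is to construct an explicit $(b+1)$-uniform linear hypertree $T_n$ of maximum degree at most $d+1$ whose root has ratio $f_{\lambda_0}^n(\lambda_0) = -1$, from which $Z(T_n; \lambda_0) = 0$ will follow.

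First I would define a rooted hypertree $(T_k, r_k)$ by recursion on $k$. Let $T_0$ be a single vertex $r_0$. For $k \ge 1$, form $T_k$ by taking a fresh root $r_k$ with $d$ new incident hyperedges of the form $\{r_k, v_1^{(i)}, \ldots, v_b^{(i)}\}$ for $i=1,\ldots,d$ (all $v_j^{(i)}$ fresh vertices), and gluing an isomorphic copy of $T_{k-1}$ onto each $v_j^{(i)}$ at its root. By construction, $T_n$ is a $(b+1)$-uniform linear hypertree in which the root has degree $d$, other internal vertices have degree $d+1$, and the deepest leaves have degree~$1$.

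Next I would prove by induction on $k$ that, as rational functions in $\lambda$,
\[
    R_{r_k}(T_k; \lambda) = f_\lambda^k(\lambda).
\]
The base case is immediate since $R_{r_0}(T_0;\lambda)=\lambda=f_\lambda^0(\lambda)$. For the inductive step, apply Lemma~\ref{lem:tree_recursion} at $r_k$ and use that each branch below $v_j^{(i)}$ is an isomorphic copy of $T_{k-1}$ (with inductive ratio $f_\lambda^{k-1}(\lambda)$), obtaining
\[
    R_{r_k}(T_k; \lambda) = \lambda \bigl(1 - \mu(f_\lambda^{k-1}(\lambda))^b\bigr)^d = f_\lambda(f_\lambda^{k-1}(\lambda)) = f_\lambda^k(\lambda).
\]

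Specializing at $\lambda_0$, the hypothesis gives $R_{r_n}(T_n; \lambda_0) = -1$ as a value in $\Chat$. To conclude $Z(T_n; \lambda_0) = 0$, I would write $R_{r_n} = P/Q$ in lowest terms with $Z_{r_n}^{\mathrm{in}} = PS$ and $Z_{r_n}^{\mathrm{out}} = QS$ for a shared cofactor $S$; then $Z(T_n; \lambda) = (P + Q)S$ as polynomials, and $R_{r_n}(\lambda_0) = -1$ forces $P(\lambda_0) + Q(\lambda_0) = 0$, yielding $Z(T_n;\lambda_0)=0$. The only (minor) obstacle is exactly this last step: one cannot naively invoke the identity $Z = Z^{\mathrm{out}}(1 + R)$ pointwise when $Z_{r_n}^{\mathrm{out}}$ vanishes at $\lambda_0$, which is why passing to the lowest-terms representation is needed to cover the case when the iterates of $f_{\lambda_0}$ pass through $\infty$ before landing at $-1$.
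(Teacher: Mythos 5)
Your proof is correct and follows essentially the same route as the paper: the same recursive construction of the finite $d$-ary $(b+1)$-uniform hypertree, the same inductive identity $R_{r_k}(T_k;\lambda)=f_\lambda^k(\lambda)$ via Lemma~\ref{lem:tree_recursion}, and the same conclusion that a ratio of $-1$ forces $Z(T_n;\lambda_0)=0$. The only cosmetic difference is that you inline a lowest-terms argument for the last step, whereas the paper cites the $(3\Rightarrow 1)$ implication of Lemma~\ref{lem:ratio_vs_zero}, which handles the same $Z^{\mathrm{out}}_{r_n}(\lambda_0)=0$ degeneracy by a short case split.
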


\begin{proof}
We inductively define a sequence $\{(\mathcal{T}_m,v_m)\}_{m \geq 0}$ of rooted $(b+1)$-uniform linear hypertrees whose maximum degrees are at most $d+1$. We let $\mathcal{T}_0$ consist of a single vertex $v_0$. For $m \geq 1$ we let $\mathcal{T}_m$ consist of the vertex $v_m$ that is contained in $d$ hyperedges of size $b+1$ only intersecting in $v_m$. For all vertices $u \neq v_m$ sharing a hyperedge with $v_m$ we attach a disjoint copy of $(\mathcal{T}_{m-1},v_{m-1})$ by identifying $u$ with $v_{m-1}$.

It follows from Lemma~\ref{lem:tree_recursion} that
\[
    R_{v_m}(\mathcal{T}_m;\lambda) = f_\lambda^m(\lambda)
\]
for all $m \geq 0$ and $\lambda \in \mathbb{C}$.
Therefore $R_{v_n}(\mathcal{T}_n;\lambda_0) = -1$ and thus it follows from Lemma~\ref{lem:ratio_vs_zero} that there is a $(b+1)$-uniform linear hypertree $\mathcal{T}$ with degree at most $d+1$ and $Z_\mathcal{T}(\lambda_0) = 0$ (in fact, one can take $\mathcal{T} = \mathcal{T}_n$). 
\end{proof}

We remark that for graphs, i.e. for $b=1$, the trees defined in the proof of the lemma above are often refered to as $d$-ary trees of finite depth.

\begin{lemma}
    \label{lem: indiff implies zero}
    Let $\lambda_0 \in \mathbb{C}\setminus\{-1,0\}$ and suppose $f_{\lambda_0}$ has an indifferent fixed point. Then, given a neighborhood $U$ of $\lambda_0$, there is a $\lambda_1 \in U$ and a $(b+1)$-uniform linear hypertree $\mathcal{T}$ with degree at most $d+1$ such that $Z_\mathcal{T}(\lambda_1) = 0$.
\end{lemma}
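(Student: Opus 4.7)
The plan is to reduce the statement to Lemma~\ref{lem: ratio -1 implies zero} by showing that in every neighborhood of $\lambda_0$ there exists $\lambda_1$ with $f_{\lambda_1}^n(\lambda_1) = -1$ for some $n \geq 1$. The proof blends the standard dynamical principle that an indifferent periodic point forces non-normality of the critical orbits with Montel's theorem applied to three cleverly chosen targets.

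The first task would be to show that the family $\mathcal{G} = \{\lambda \mapsto f_\lambda^n(\lambda)\}_{n\geq 1}$ is not normal at $\lambda_0$. By Lemma~\ref{lem: critical orbits} every critical point $c_i$ is $\lambda$-independent and satisfies $f_\lambda^{n_i}(c_i)=\lambda$ for some $n_i \leq 3$, so each critical-orbit family $\mathcal{F}_i = \{\lambda \mapsto f_\lambda^n(c_i)\}_{n\geq 1}$ differs from $\mathcal{G}$ by only finitely many maps, and normality of $\mathcal{G}$ at $\lambda_0$ would transfer to every $\mathcal{F}_i$. Theorem~\ref{thm: Hol Motion} would then yield a neighborhood of $\lambda_0$ on which every periodic point of $f_\lambda$ is attracting, repelling or persistently indifferent. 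The hypothesis supplies an indifferent fixed point of $f_{\lambda_0}$, which, having multiplier of modulus $1$, can be neither attracting nor repelling and would therefore have to be persistently indifferent; this contradicts Lemma~\ref{lem: no pers. indiff.}.

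Next, I would prepare three pairwise distinct holomorphic targets $U \to \Chat$ for Montel's theorem (Theorem~\ref{thm: Montel's theorem}). After shrinking $U$ to miss $\{0,-1\}$, the equation $f_\lambda(z)=-1$ has $bd$ finite solutions (none equal to $-1$ since $f_\lambda(-1)=\infty$), and a short check shows none of these solutions is itself a critical point of $f_{\lambda_0}$, so by the implicit function theorem they extend to $bd$ pairwise distinct holomorphic branches on a neighborhood of $\lambda_0$. Since $d \geq 2$ gives $bd \geq 2$, I can pick two such branches $p_1, p_2 : U \to \mathbb{C}$ satisfying $f_\lambda \circ p_j \equiv -1$. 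Setting $h_1 \equiv -1$, $h_2 = p_1$, $h_3 = p_2$, the non-normality of $\mathcal{G}$ at $\lambda_0$ together with Montel's theorem produces $\lambda_1 \in U$ and $n \geq 1$ with $f_{\lambda_1}^n(\lambda_1) \in \{-1, p_1(\lambda_1), p_2(\lambda_1)\}$. In the first case we already have $f_{\lambda_1}^n(\lambda_1) = -1$; in the other two cases $f_{\lambda_1}^{n+1}(\lambda_1) = -1$. Either way, Lemma~\ref{lem: ratio -1 implies zero} then supplies the required $(b+1)$-uniform linear hypertree.

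The main obstacle is the non-normality step, which requires simultaneously applying holomorphic motion to every critical orbit; the connection of those orbits to $\mathcal{G}$ is only through the identity $f_\lambda^{n_i}(c_i) = \lambda$ from Lemma~\ref{lem: critical orbits}, and one must be careful that the ``finite-shift'' argument really does give normality of each $\mathcal{F}_i$ at $\lambda_0$. Setting up the Montel targets is comparatively routine, but one must verify that $p_1, p_2$ can be chosen distinct from each other and from $h_1 \equiv -1$; this is precisely where the hypothesis $d \geq 2$ (and hence $bd \geq 2$) is used.
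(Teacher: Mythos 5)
Your proof is correct and follows essentially the same route as the paper: non-normality of the family $\{\lambda \mapsto f_\lambda^n(\lambda)\}$ at $\lambda_0$ via Theorem~\ref{thm: Hol Motion}, Lemma~\ref{lem: no pers. indiff.} and Lemma~\ref{lem: critical orbits}, then Montel's theorem with preimages of $-1$ as targets, and finally Lemma~\ref{lem: ratio -1 implies zero}. The only cosmetic difference is the choice of Montel targets: the paper uses $\{-1,\infty,p(\lambda)\}$ with a single holomorphic preimage branch of $-1$, whereas you use two preimage branches $\{-1,p_1(\lambda),p_2(\lambda)\}$ (justified, as you note, by the non-criticality of all solutions of $f_{\lambda_0}(z)=-1$ and $bd\ge 2$); both work equally well.
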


\begin{proof}
Take $p_0 \in \mathbb{C}$ such that $f_{\lambda_0}(p_0) = -1$. It follows that $f_{\lambda_0}^2(p_0) = \infty$ and $f_{\lambda_0}^3(p_0) = 0$ and thus because $\lambda_0 \not\in\{-1,0,\infty\}$, it follows from Lemma~\ref{lem: critical orbits} that $p_0$ is not a critical point of $f_{\lambda_0}$. We can therefore apply the Implicit Function Theorem to obtain a neighborhood $V$ of $\lambda_0$ and a holomorphic map $p: V \to \Chat$ such that $p(\lambda_0) = p_0$ and $f_{\lambda}(p(\lambda)) = -1$ for all $\lambda \in V$. Note that this implies that $p(\lambda) \not \in \{-1,\infty\}$ for all $\lambda \in V$.

Let $U \subseteq V$ be an arbitrary neighborhood of $\lambda_0$. By Lemma~\ref{lem: critical orbits} there are holomorphic functions $c_1, \dots, c_N$ parameterizing the critical points of $f_\lambda$ on $U$. Because $f_{\lambda_0}$ has an indifferent fixed point that, by Lemma~\ref{lem: no pers. indiff.}, is not persistently indifferent it follows from Theorem~\ref{thm: Hol Motion} that at least one of the families $\mathcal{F}_i = \{\lambda \mapsto f^n_\lambda(c_i(\lambda))\}_{n \geq 1}$ is not normal on $U$. It then follows from Lemma~\ref{lem: critical orbits} that the family 
$\{\lambda \mapsto f^n_\lambda(\lambda)\}_{n \geq 1}$
is not normal on $U$. By Theorem~\ref{thm: Montel's theorem} there must be a $\lambda_1 \in U$ such that $f_{\lambda_1}^n(\lambda_1) \in \{-1,\infty,p(\lambda_1)\}$. If $f_{\lambda_1}^n(\lambda_1) = \infty$ then $f_{\lambda_1}^{n-1}(\lambda_1) = -1$ and if $f_{\lambda_1}^n(\lambda_1) = p(\lambda_1)$ then $f_{\lambda_1}^{n+1}(\lambda_1) = -1$. In all three cases we can apply Lemma~\ref{lem: ratio -1 implies zero} to conclude that there is a $(b+1)$-uniform linear hypertree $\mathcal{T}$ with degree at most $d+1$ such that $Z_\mathcal{T}(\lambda_1) = 0$.
\end{proof}

Using Lemma~\ref{lem: fixed points flambda} we can calculate parameters $\lambda$ for which $f_{\lambda}$ has an indifferent fixed point. Figure~\ref{fig: indiff_par} shows examples of such parameters, which, by the previous lemma, are accumulation points of zeros of $(b+1)$-uniform hypertrees.

Let $\mathcal{U}_{\Delta,2}$ be the maximal connected zero-free region containing $0$ for graphs of maximum degree at most $\Delta$ (shifting the index back from the one used in Section~\ref{sec: limit region} to the one in the introduction). In \cite[Conjecture 6]{PerkinsHypergraphs} it is conjectured that this region is zero-free for hypergraphs of maximum degree at most $\Delta$ as well. Using Lemma~\ref{lem: indiff implies zero} we can show that, at least for $\Delta = 3$, this is not the case.

\begin{corollary} \label{cor: U3 is not zero-free}
    The region $\mathcal{U}_{3,2}$ is not zero-free for hypergraphs of maximum degree at most $3$.
\end{corollary}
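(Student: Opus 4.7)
The plan is to apply Lemma~\ref{lem: indiff implies zero} at a specific parameter $\lambda_0 \in \mathcal{U}_{3,2}$ for which the hypertree recursion $f_{\lambda_0}(z) = \lambda_0[1-(z/(1+z))^2]^2$ (i.e.\ the case $b = d = 2$, corresponding to $3$-uniform linear hypertrees of maximum degree $3$) has an indifferent fixed point. Since $\mathcal{U}_{3,2}$ is open, the nearby parameter $\lambda_1$ delivered by the lemma may be chosen to lie in $\mathcal{U}_{3,2}$, exhibiting a $3$-uniform linear hypertree of maximum degree $3$ whose independence polynomial vanishes at $\lambda_1 \in \mathcal{U}_{3,2}$.

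To construct $\lambda_0$, I would apply Lemma~\ref{lem: fixed points flambda} with $b = d = 2$: indifferent fixed points arise from $w \in \mathbb{C}\setminus\{1\}$ with $|4w^2/(1+w)| = 1$, and the corresponding parameter is $\lambda = w/[(1-w)^3(1+w)^2]$. Specialising to purely imaginary $w = iy$ with $y > 0$ forces $16y^4 = 1 + y^2$, so $y^2 = (1+\sqrt{65})/32$; using $(1-iy)(1+iy) = 1+y^2$ twice then collapses the formula to
\[
    \lambda_0 = \frac{iy(1+iy)}{(1+y^2)^3} = \frac{-y^2 + iy}{(1+y^2)^3} \approx -0.134 + 0.252\,i.
\]

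Verifying $\lambda_0 \in \mathcal{U}_{3,2}$ is the key step. The standard Weitz-type argument (the graph specialisation of the machinery in Section~\ref{sec: zerofreeness}) places into $\mathcal{U}_{3,2}$ every $\lambda$ for which the graph recursion $g_\lambda(z) = \lambda/(1+z)^2$ has an attracting fixed point, i.e.\ every $\lambda$ of the form $w'/(1-w')^3$ with $|w'| < 1/2$; the multiplier at such a fixed point is $-2w'$. Thus it is enough to produce a root of the cubic $(1-w')^3 \lambda_0 = w'$ of modulus strictly less than $1/2$. A Newton iteration started from $w' = 0.2 + 0.2\,i$ converges rapidly to $w' \approx 0.070 + 0.244\,i$, of modulus $\approx 0.254$.

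The main obstacle is that the preceding verification is numerical; I would certify it rigorously via the argument principle, by computing on a short polygonal contour around the Newton approximation that the function $w' \mapsto w'/(1-w')^3 - \lambda_0$ has winding number $1$ about the origin, thereby placing a genuine root in the interior of the contour (and of $\B{1/2}$). Once $\lambda_0 \in \mathcal{U}_{3,2} \setminus \{-1,0\}$ is established and $f_{\lambda_0}$ is seen to have an indifferent fixed point by construction, Lemma~\ref{lem: indiff implies zero} applied in any open neighborhood of $\lambda_0$ contained in $\mathcal{U}_{3,2}$ produces the required $\lambda_1 \in \mathcal{U}_{3,2}$ and $3$-uniform linear hypertree $\mathcal{T}$ of maximum degree $3$ with $Z_\mathcal{T}(\lambda_1) = 0$, completing the proof.
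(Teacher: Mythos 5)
Your dynamical part is fine: for $b=d=2$ the computation via Lemma~\ref{lem: fixed points flambda} of a parameter $\lambda_0=\tfrac{-y^2+iy}{(1+y^2)^3}$ with $16y^4=1+y^2$, at which $f_{\lambda_0}$ has an indifferent fixed point, is correct, and Lemma~\ref{lem: indiff implies zero} would then give zeros of $3$-uniform linear hypertrees of maximum degree $3$ accumulating at $\lambda_0$. The gap is in the step you yourself call the key one: the claim that ``the standard Weitz-type argument'' places in $\mathcal{U}_{3,2}$ every $\lambda$ for which the univariate graph recursion $z\mapsto\lambda/(1+z)^2$ has an attracting fixed point (equivalently $\lambda=w'/(1-w')^3$ with $|w'|<1/2$). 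No such criterion is available in this paper or in the literature: the machinery of Section~\ref{sec: zerofreeness} requires a region that is \emph{strictly forward invariant} for all the relevant maps (for graphs, for every down-degree up to $d$), together with the additional structure of Lemma~\ref{lem: properties of A}, and the mere existence of an attracting fixed point of the single map $f_\lambda$ does not produce such a region. Indeed, whether attracting-fixed-point parameters are zero-free for all graphs of maximum degree $\Delta$ is exactly the delicate issue behind the cited result that Cayley trees do not determine the maximal zero-free locus \cite{buys_cayleytrees}; it cannot be taken for granted, and your argument-principle certification only certifies the existence of the attracting fixed point, not membership of $\lambda_0$ in $\mathcal{U}_{3,2}$.

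The paper sidesteps precisely this difficulty by a different choice of uniformity: it takes $b=4$, $d=2$ (so $5$-uniform hypertrees of maximum degree $3$), for which the indifferent-fixed-point parameter is $\lambda_0\approx 0.0665+0.6015i$, a point in the \emph{right} half-plane of modulus $\approx 0.605$; membership in $\mathcal{U}_{3,2}$ then follows from the quantitative result quoted from \cite{bencs2023complex}, namely that $\mathcal{U}_{3,2}$ contains the intersection of the disk of radius $\tfrac{7}{8}\tan(\tfrac{\pi}{4})=0.875$ with the right half-plane. Your $\lambda_0\approx -0.134+0.252i$ has negative real part and modulus $\approx 0.285>\lambda_s(3)=4/27$, so neither that result nor the Shearer disk of Theorem~\ref{thm: Shearer all hypergraphs} certifies $\lambda_0\in\mathcal{U}_{3,2}$, and without a rigorous certificate of this membership (for instance an explicit invariant region for all trees of maximum degree $3$ at this parameter, which you do not construct) the proof is incomplete. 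If you want to keep $b=2$, you must either supply such a certificate or move the indifferent parameter into a region where a known zero-freeness theorem for degree-$3$ graphs applies, which is exactly what the paper's choice of $b=4$ accomplishes.
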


\begin{proof}
    Let $\lambda_0 = (1-w_0^4)^{-2}\frac{w_0}{1-w_0}$, where $w_0 \approx 0.3540 + 0.5331 i$ is a solution to
    \[
        -8 \frac{w_0^4(1-w_0)}{(1-w_0^4)}=1.
    \]
    We find that $\lambda_0 \approx 0.0665 + 0.6015 i$; see also Figure~\ref{fig: indiff_par}. By Lemmas~\ref{lem: fixed points flambda}~and~\ref{lem: indiff implies zero} zeros of $5$-uniform linear hypertrees with maximum degree at most $3$ accumulate on $\lambda_0$.

    On the other hand, it is shown in \cite{bencs2023complex} that $\mathcal{U}_{d+1,2}$ contains the intersection of the disc centered at $0$ of radius $\frac{7}{8}\tan(\frac{\pi}{2d})$ and the right half-plane. Because $\frac{7}{8}\tan(\frac{\pi}{4}) = 0.875$ and $|\lambda_0| \approx 0.6052$ we can conclude that $\lambda_0 \in \mathcal{U}_{3,2}$.
\end{proof}


\begin{figure}[h!]
\label{fig: indiff_par}
\center
\includegraphics{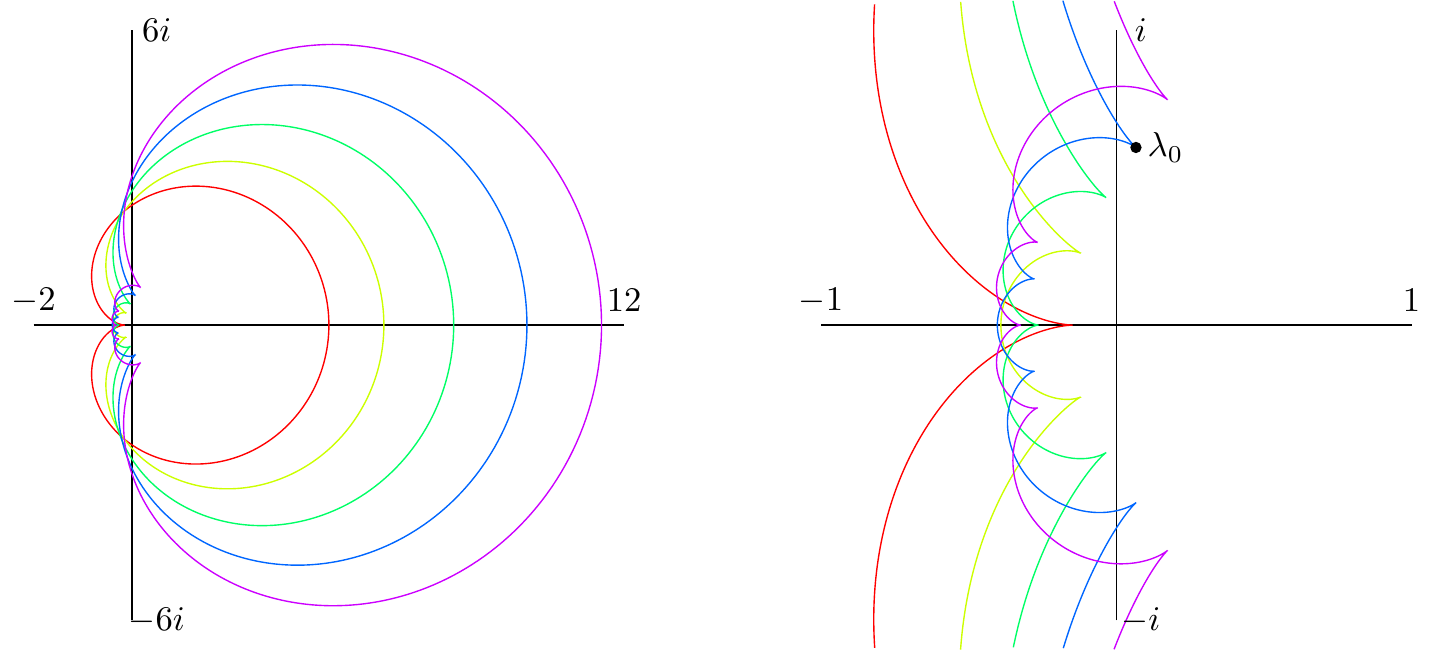}
\caption{Parameters $\lambda$ for which $f_{\lambda,b}$ has an indifferent fixed point for $d=2$ and $b$ ranging from $1$ to $5$. The regions enclosed by the curves get increasingly larger.}
\end{figure}

\subsection{Dynamics on disks}

To prove the zero-freeness part of Theorem~\ref{thm: Zero free linear (k)-uniform hypertrees} we will apply Lemma~\ref{lem: zero-free disk k-uniform hypertrees}. We will thus show that there is a $\lambda$ with $|\lambda| = \rho_{d,b}$ such that $f_{\lambda}(B_R) \subseteq B_R$ for some $R > 0$. Instead of using the value of $\rho_{d,b}$ we will do this by letting $\lambda_0$ be maximal in norm for which we can apply Lemma~\ref{lem: zero-free disk k-uniform hypertrees}. It will follow that there must be a $\lambda$ of the same norm as $\lambda_0$ for which $f_{\lambda}$ has a fixed point of multiplier $1$. We must first prove some lemmas about disks.

\begin{lemma}
\label{lem: Two maxima}
    Let $D\subset \mathbb{C}$ be a closed disk with $0 \in \Int(D)$ whose center lies in $\mathbb{R}_{< 0}$ and let $z_0$ be the intersection of $\partial D$ with the negative real axis. Then 
    \begin{itemize}
        \item for odd $n \in \mathbb{Z}_{\geq 1}$
        \[
            \argmax_{z \in D} |z^n-1| = \{z_0\};
        \]
        \item
        for even $n \in \mathbb{Z}_{\geq 1}$ either
        \[
            \argmax_{z \in D} |z^n-1| = \{z_0\}
            \quad
            \text{ or }
            \quad 
            \argmax_{z \in D} |z^n-1| = \{z_M, \overline{z_M}\},
        \]
        where $z_M$ is a non-real value in $\partial D$.
    \end{itemize}
\end{lemma}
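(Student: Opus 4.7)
The plan is to apply the maximum modulus principle to reduce the problem to $\partial D$ and then analyse the real-analytic function $h(\theta) := |(c + Re^{i\theta})^n - 1|^2$, where $c < 0$ is the centre and $R > -c$ is the radius of $D$, so that $z_0 = c - R$ is the unique point of $\partial D$ on the negative real axis and $c + R > 0$ is the other real boundary point. Conjugation symmetry forces $h(-\theta) = h(\theta)$, so the analysis reduces to $[0,\pi]$, with $\theta = 0$ and $\theta = \pi$ always critical, and any non-real maximizer is accompanied by its conjugate.

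The first step is to exclude the point $c + R$. A second-order expansion of $(c + Re^{i\theta})^n$ at $\theta = 0$ gives, after regrouping,
\[
    h''(0) \;=\; 2nR(c+R)^{n-2}\bigl[-c(c+R)^n + (c+R) + (n-1)R\bigr],
\]
in which each summand in the bracket is strictly positive because $c < 0 < c + R$. Hence $\theta = 0$ is a strict local minimum of $h$, and $c+R$ can never attain the maximum.

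For odd $n$, I would then apply the bound $|z^n - 1| \leq |z|^n + 1$, which is saturated precisely when $z^n \in \mathbb{R}_{\leq 0}$. On $D$ the modulus $|z|$ is maximized uniquely at $z_0$, since $0 \in \Int(D)$ and the centre lies on $\mathbb{R}_{<0}$, and for odd $n$ we have $z_0^n = -(R-c)^n \in \mathbb{R}_{<0}$. Both equality conditions therefore force $z = z_0$, yielding uniqueness. For even $n$ this bound is no longer tight, since $z_0^n > 0$. In this case I would substitute $u = \cos\theta$: using Chebyshev polynomials to rewrite $\Re(z^n)$, the function $\tilde h(u) := h(\theta)$ extends to a polynomial of degree $n$ in $u$ on $\mathbb{R}$, and a short computation gives $\tilde h'(1) = -h''(0) < 0$. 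Hence $u = 1$ is not a maximum of $\tilde h$ on $[-1,1]$, so the maximum is attained either at $u = -1$ (giving $z_0$) or at some interior critical point $u^\ast \in (-1, 1)$, which by conjugation yields a non-real pair $\{z_M, \overline{z_M}\} \subset \partial D$ in the argmax.

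The main obstacle is the uniqueness claim in the even case --- that the argmax is exactly one of the two described forms, not a larger union. I anticipate addressing this by exploiting the polynomial structure: each interior maximizer contributes a double root to $\tilde h(u) - M$ (where $M$ is the maximum value), the sign constraint $\tilde h \leq M$ on $[-1,1]$ together with $\tilde h'(1) < 0$ restricts the remaining roots, and $\deg(\tilde h - M) \leq n$ limits the total. A careful case analysis, using the explicit Chebyshev form of $\tilde h$, should rule out simultaneous maxima at $z_0$ and at a non-real pair, as well as at two distinct non-real pairs.
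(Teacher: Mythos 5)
Your setup and several of the individual steps are correct: the reduction to $\partial D$, the computation $h''(0) = 2nR(c+R)^{n-2}\bigl[-c(c+R)^n + (c+R) + (n-1)R\bigr] > 0$ excluding the positive real boundary point, the identity $\tilde h'(1) = -h''(0)$, and the odd-$n$ argument via $|z^n-1|\le |z|^n+1$ together with the strict maximality of $|z|$ at $z_0$ (this last part is essentially the paper's own odd-case argument, which encloses $D$ in $\B{|z_0|}$ and notes that $z_0^n=-|z_0|^n$ is the unique farthest point of $\B{|z_0|^n}$ from $1$).

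The gap is exactly where you flag it, and the proposed fix does not close it. For even $n$ the entire content of the lemma is that the argmax cannot contain both $z_0$ and a non-real pair, nor two distinct non-real pairs, and root counting on $P(u)=M-\tilde h(u)$ is too weak for this: $\deg\tilde h\le n$, a maximizer at $u=-1$ is an endpoint and so contributes only a root of multiplicity $\ge 1$ (the even-multiplicity argument applies only to interior roots), while each interior maximizer contributes multiplicity $\ge 2$; the forbidden configurations therefore consume only $3$, respectively $4$, roots and are not excluded by the degree bound once $n\ge 4$ (your counting does settle $n=2$). The inequality $\tilde h'(1)<0$ merely removes the endpoint $u=1$ and gives no control on how many interior double roots $P$ can have, and it is not clear what feature of the Chebyshev expansion of $\tilde h$ would forbid two interior local maxima at the same height --- nothing in the coefficients does so obviously. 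The paper resolves precisely this point with a different idea: it parameterizes the image of the upper half-circle under $z\mapsto z^n$ as a curve $\gamma$, proves that the curvature of $\gamma$ is strictly monotone, and invokes the Tait--Kneser nesting of osculating circles to conclude that the distance from $1$ has a unique maximizer on the half-curve, which yields exactly the dichotomy $\{z_0\}$ or $\{z_M,\overline{z_M}\}$. Unless you supply an argument of comparable strength (some monotonicity or convexity statement about the image curve or about $\tilde h$), the even case $n\ge 4$ remains unproved.
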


\begin{proof}
Assume first that $n$ is odd. Let $z_0$ be the intersection of $\partial D$ with the negative real axis. For any $r > 0$ let $\B{r}$ denote the closed disk centered at $0$ with radius $r$. For a set $A$ let $A^n = \{z^n: z\in A\}$. Observe that $D \subseteq \B{|z_0|}$ and therefore $D^n \subseteq \B{|z_0|}^n = \B{|z_0|^n}$. The unique maximizer of $|z-1|$ with $z \in \B{|z_0|^n}$ is $-|z_0|^n = z_0^n$. Because $D\setminus\{z_0\} \subseteq \Int(\B{|z_0|})$ it follows that $z_0$ is the unique maximizer for $|z^n-1|$ with $z \in D$.

Now assume that $n$ is even. Let $-x$ and $r$ be the center and the radius of $D$ respectively and note that $x > r$. The disk $D$ is the image of the closed unit disk $\overline{\mathbb{D}}$ under the map $z \mapsto rz - x$. Because $n$ is even and $\overline{\mathbb{D}}$ is symmetric under $z \mapsto -z$ we see that $D^n$ is the image of $\overline{\mathbb{D}}$ under the map $p(z):=(rz + x)^n$. We want to determine the values in $\overline{\mathbb{D}}$ that maximize $|p(z) - 1|$. These values have to lie on the boundary $\partial \mathbb{D}$. 

We define $\gamma(t) = p(e^{\pi i t})$ for $t \in [0,1]$, which parameterizes the image under $p$ of the part of $\partial \mathbb{D}$ that is contained in the upper half-plane. It is sufficient to show that there is a unique $t_0 \in [0,1)$ maximizing $|\gamma(t) - 1|$. Note that either $0 < \gamma(1) \leq 1$ or $1 \leq \gamma(1) < \gamma(0)$ so $t=1$ is never a maximizer of $|\gamma(t) - 1|$.

The curvature of the curve $\gamma$ is given by 
\[
    \kappa(t) = \frac{1}{|p'(e^{\pi i t})|} \Re\left(1 + \frac{e^{\pi i t} p''(e^{\pi i t})}{p'(e^{\pi i t})}\right).
\]
We will show that $\kappa$ is strictly increasing.

We have that $p'(e^{\pi i t}) = nr(re^{\pi i t}+x)^{n-1}$. As $t$ increases $re^{\pi i t}+x$ travels along a circle of radius $r$ centered at $x$ from the positive real axis to the negative real axis. Since $x > 0$ we see that $|re^{\pi i t}+x|$ is decreasing and therefore $1/|p'(e^{\pi i t})|$ is increasing. 

We calculate 
\[
1 + \frac{e^{\pi i t} p''(e^{\pi i t})}{p'(e^{\pi i t})} = 1 + \frac{e^{\pi i t} (n-1)r}{re^{\pi i t} + x} = \frac{nr e^{\pi i t} + x}{r e^{\pi i t} + x}.
\]
The map $z \mapsto (nrz + x)/(rz + x)$ is a M\"obius transformation that maps the real line to the real line. Because a M\"obius transformation is conformal and sends generalized circles to generalized circles it must map $\partial\mathbb{D}$ to a circle crossing the real line transversely at $(nr + x)/(r + x)$ and $(-nr+x)/(-r + x)$. Observe that 
\[
    0 < \frac{nr + x}{r + x} < \frac{-nr+x}{-r + x}
\]
which implies that as $t$ increases from $0$ to $1$ the real part of $(nr e^{\pi i t} + x)/(r e^{\pi i t} + x)$ increases. It follows that $\kappa$ is increasing.

Let $C_t$ be the osculating circle of $\gamma$ at $\gamma(t)$. This is the circle that locally approximates $\gamma$ the best; see e.g. \cite{OscCircles}. A theorem by Tait and later rediscovered by Kneser \cite{tait1896note, kneser1912bemerkungen} states that if a curve has strictly increasing curvature then $t_1 > t_2$ implies that $C_{t_1}$ strictly contains $C_{t_2}$. This implies that for any $t_0 \in [0,1)$ the circle $C_{t_0}$ strictly contains $\gamma((t_0,1))$. Moreover, by the definition of curvature, any circle touching the curve $\gamma$ at $\gamma(t_0)$ of smaller radius must have the property that there are $t>t_0$ with $\gamma(t)$ lying outside of it.

Now let $t_0 \in [0,1)$ be the minimal $t$ for which $|\gamma(t) - 1|$ is maximized. Let $C_M$ be the circle with radius $|\gamma(t_0)-1|$ centered at $1$. The circle $C_M$ touches the curve $\gamma$ at $\gamma(t_0)$ and also contains the the whole curve $\gamma([0,1])$. Therefore its radius is at least that of $C_{t_0}$ and thus $C_M$ contains $C_{t_0}$. It follows that $C_M$ strictly contains the curve $\gamma((t_0,1))$ and thus we conclude that $t_0$ is the unique maximizer of $|\gamma(t) -1|$ for $t \in [0,1]$. See Figure~\ref{fig: example circles} for an example.


It follows that $z_M := - x - re^{\pi i t_0}$ is the unique maximizer of $|z^n - 1|$ for $z$ in the intersection of $D$ with the closed lower half-plane. By symmetry $\overline{z_M}$ is the unique maximizer for $z$ in the intersection of $D$ with the closed upper half-plane. If $t_0 = 0$ then $z_M = \overline{z_M} = z_0$.
\end{proof}

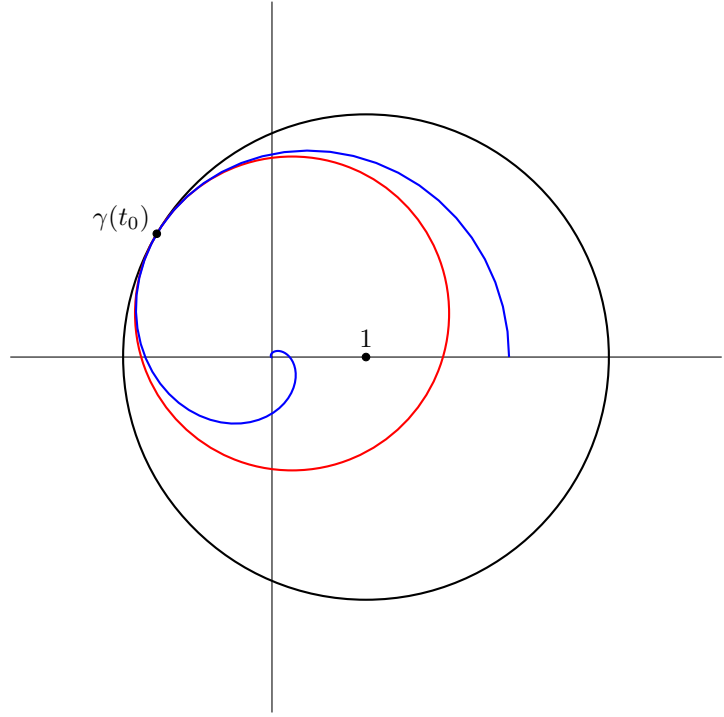
\begin{figure}[ht]
\centering
\label{fig: example circles}
\begin{tikzpicture}
    \draw[] (-3.47898,0) -- (5.97898,0);
    \draw[] (0,-4.72898) -- (0,4.72898);
  \draw[thick] (1.25,0) circle (3.22898);
  \draw[thick, red] (0.267104, 0.579841) circle (2.0878);
  \draw[thick,blue] (3.15203,0)--(3.13149,0.338613)--(3.07025,0.671857)--(2.96942,0.994474)--(2.83083,1.30142)--(2.65699,1.58797)--(2.45102,1.8498)--(2.2166,2.08308)--(1.95788,2.28458)--(1.67939,2.45164)--(1.38596,2.58232)--(1.08257,2.67534)--(0.774308,2.73017)--(0.466224,2.74696)--(0.16324,2.72658)--(-0.129948,2.67053)--(-0.408959,2.58095)--(-0.66981,2.4605)--(-0.908988,2.31236)--(-1.1235,2.14008)--(-1.31094,1.94753)--(-1.46947,1.73883)--(-1.59791,1.5182)--(-1.69567,1.28991)--(-1.76277,1.05819)--(-1.79983,0.827119)--(-1.80798,0.600555)--(-1.78886,0.382074)--(-1.74456,0.174899)--(-1.67753,-0.0181505)--(-1.59049,-0.194698)--(-1.48643,-0.352838)--(-1.36845,-0.491147)--(-1.23973,-0.608694)--(-1.10347,-0.705026)--(-0.962767,-0.780149)--(-0.82059,-0.834502)--(-0.679716,-0.868917)--(-0.542676,-0.884572)--(-0.411717,-0.882942)--(-0.288772,-0.865744)--(-0.175436,-0.834881)--(-0.0729577,-0.792378)--(0.0177651,-0.740332)--(0.0961758,-0.680852)--(0.162046,-0.616009)--(0.215451,-0.547792)--(0.256744,-0.478061)--(0.286519,-0.408519)--(0.305577,-0.340679)--(0.314884,-0.275849)--(0.315533,-0.215114)--(0.308697,-0.159333)--(0.295599,-0.109141)--(0.277468,-0.0649512)--(0.255506,-0.026973)--(0.230862,0.00477481)--(0.204602,0.0304413)--(0.177691,0.0503201)--(0.150979,0.064824)--(0.125185,0.0744584)--(0.100899,0.0797946)--(0.0785758,0.0814447)--(0.0585409,0.0800373)--(0.0409974,0.0761954)--(0.0260363,0.070518)--(0.0136492,0.0635637)--(0.00374326,0.0558377)--(-0.00384369,0.0477832)--(-0.00932657,0.0397746)--(-0.0129575,0.0321154)--(-0.0150104,0.0250383)--(-0.0157675,0.0187075)--(-0.0155068,0.0132243)--(-0.0144917,0.00863347)--(-0.0129631,0.00493144)--(-0.0111326,0.00207506)--(-0.0091791,0)--(-0.0072467,-0.00141641)--(-0.00544447,-0.00225411)--(-0.00384812,-0.00263503)--(-0.00250284,-0.00267)--(-0.00142721,-0.00246256)--(-0.000617806,-0.00210466)--(-0.0000541439,-0.00167379)--(0.000296251,-0.00123144)--(0.000473033,-0.000822717)--(0.000518651,-0.000477012)--(0.000474598,-0.000209584)--(0.000378427,-0.0000237485)--(0.0002616,0.0000864958)--(0.000148195,0.000133652)--(0.0000544373,0.000133935)--(-0.0000110359,0.000104694)--(-0.0000462827,0.0000622163)--(-0.0000547002,0.0000200232)--(-0.0000433631,-0.0000122599)--(-0.0000212384,-0.0000294348)--(0,-0.0000306752)--(0.0000202816,-0.0000189472)--(0.0000267918,0);

    \draw[fill=black] (1.25,0) circle (0.05);
    \node[black] at (1.25,0.25){$1$};
    \draw[fill=black] (-1.53111, 1.64066) circle (0.05);
    \node[black] at (-2, 1.85){$\gamma(t_0)$};
    
\end{tikzpicture}
\caption{An example accompanying the proof of Lemma~\ref{lem: Two maxima}. Here $x = \frac{1}{2}, r=\frac{2}{3}$ and $n=6$. The curve $\gamma$ is drawn in blue, the osculating circle at $\gamma(t_0)$ is drawn in red and the circle touching $\gamma(t_0)$ with center $1$ is drawn in black.}
\end{figure}

\begin{lemma}
    \label{lem: max disk -> indifferent}
    Let $f: \Chat \to \Chat$ be a rational function and $R > 0$ for which
    \begin{enumerate}
        \item $f(\B{R}) \subseteq \B{R}$;
        \item there does not exist any $\tilde{R} > 0$ such that $f(\B{\tilde{R}}) \subseteq \Int(\B{\tilde{R}})$;
        \item for any $z_1, z_2 \in \argmax_{z \in \B{R}} |f(z)|$ we have $|f'(z_1)| = |f'(z_2)|$. 
    \end{enumerate}
    Then $|f'(z)| = 1$ for all $z \in \argmax_{z \in \B{R}} |f(z)|$.
\end{lemma}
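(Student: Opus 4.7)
My plan is to analyze the maximum-modulus function $h(r):=\max_{|z|=r}|f(z)|$, which by the maximum modulus principle equals $\max_{z\in\B{r}}|f(z)|$ and is continuous and nondecreasing in $r$. Condition (1) gives $h(R)\le R$, and condition (2) forces $h(\tilde R)\ge \tilde R$ for every $\tilde R>0$ (otherwise $\tilde R$ itself would witness the excluded strict inclusion). Hence $h(R)=R$, so $r\mapsto h(r)-r$ is everywhere nonnegative and vanishes at $R$. Note that $f$ must be nonconstant, for a constant function would be mapped strictly into $\B{\tilde R}$ for any $\tilde R>|f(0)|$. Set $S:=\argmax_{z\in\B{R}}|f(z)|$; by the maximum principle $S\subseteq\partial \B{R}$, with $|f|\equiv R$ on $S$.

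The central step is to show that $h$ is differentiable at $R$ with $h'(R)=|f'(z_0)|$ for every (equivalently, by hypothesis (3), for any) $z_0\in S$. Fix $z_0=Re^{i\theta_0}\in S$. Differentiating $|f(Re^{i\theta})|^2=f(Re^{i\theta})\overline{f(Re^{i\theta})}$ in $\theta$ and using that $\theta_0$ is a local max yields $z_0 f'(z_0)\overline{f(z_0)}\in\mathbb{R}$; its modulus is $R^2|f'(z_0)|$, so it equals $\pm R^2|f'(z_0)|$. The analogous computation in $r$ then gives $\partial_r|f(re^{i\theta_0})|\big|_{r=R}=\pm|f'(z_0)|$, and the sign must be nonnegative (otherwise $|f(re^{i\theta_0})|>R$ for some $r$ slightly less than $R$, contradicting $h(R)=R$). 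Hence the radial derivative at $z_0$ equals $+|f'(z_0)|$.

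Next, I would Taylor-expand $|f(re^{i\theta})|$ at each $(R,\theta_0)$ with $\theta_0\in S$. Since $\partial_\theta|f(Re^{i\theta})|$ vanishes at $\theta_0$ and $\partial_r|f(re^{i\theta_0})|\big|_{r=R}=|f'(z_0)|$, the local maximum of $|f(re^{i\theta})|$ over $\theta$ in a neighborhood of $\theta_0$ equals $R+(r-R)|f'(z_0)|+o(r-R)$ as $r\to R$; in the non-degenerate case $\partial_\theta^2|f(Re^{i\theta_0})|<0$ this follows from the implicit function / envelope theorem, and in degenerate cases the optimal $\theta^\ast(r)$ differs from $\theta_0$ only by a fractional power of $r-R$, contributing strictly higher-order corrections. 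For $\theta$ in a closed set disjoint from $S$, compactness gives $|f(Re^{i\theta})|\le R-\delta$, so $|f(re^{i\theta})|\le R-\delta/2$ for $r$ close to $R$, meaning the global maximum localizes to neighborhoods of $S$. Taking the maximum over the max points and invoking hypothesis (3), which guarantees a common value $|f'(z_0)|$ across $S$, gives
\[
h(r)=R+(r-R)|f'(z_0)|+o(r-R)\qquad\text{as }r\to R,
\]
so $h$ is differentiable at $R$ with $h'(R)=|f'(z_0)|$.

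The conclusion is then immediate: since $h(r)-r\ge 0$ for all $r>0$ with equality at $r=R$ and $h$ is differentiable at $R$, we must have $h'(R)=1$, hence $|f'(z_0)|=1$ for every $z_0\in S$. The main obstacle is the two-sided Taylor expansion of $h$: a single-ray expansion at one $z_0\in S$ only yields $D^-h(R)\le|f'(z_0)|\le D^+h(R)$, so a kink of $h$ at $R$ cannot be ruled out without using hypothesis (3) to equalize the first-order rates at all maximizers, together with the compactness/localization argument that makes the maximum on both sides of $R$ be governed exactly by this common rate.
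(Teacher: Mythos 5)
Your proposal is correct, but it follows a genuinely different route from the paper. The paper argues by contradiction in two cases, using hypothesis (3) only to guarantee that all maximizers have $|f'|<1$ or all have $|f'|>1$: in the first case it shows directly, via a derivative bound $|f'|<\eta<1$ on a small sector neighborhood of the maximizer set, that a slightly larger disk $\B{(1+\epsilon)R}$ is mapped strictly into itself; in the second case it invokes the Schwarz--Pick theorem to get $|f(z)|<|z|$ where $|f'(z)|>1$, and concludes that a slightly smaller disk $\B{(1-\epsilon)R}$ is strictly forward invariant --- either way contradicting hypothesis (2). You instead run a unified first-order analysis of the maximum-modulus function $h(r)=\max_{|z|=r}|f(z)|$: conditions (1)--(2) give $h(R)=R$ and $h(r)-r\ge 0$ near $R$, the identity $\partial_r|f|\big|_{z_0}=|f'(z_0)|$ at each maximizer (with the sign fixed by the maximum principle) plus hypothesis (3) and an envelope/Danskin-type localization give $h'(R)=|f'(z_0)|$, and the minimum of $h(r)-r$ at the interior point $R$ forces $h'(R)=1$. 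Your route avoids Schwarz--Pick and treats both directions at once, at the price of needing the exact one-sided derivative formula for a max function; the paper's route needs only elementary estimates plus one classical theorem. The only soft spot in your write-up is the degenerate-case remark about $\theta^\ast(r)$ moving by ``a fractional power of $r-R$'': this is unnecessary and, as stated, the least rigorous step. It is cleaner to note that on a closed neighborhood of the maximizer set (where $f\neq 0$, so $|f(re^{i\theta})|$ is jointly $C^1$) one has the uniform expansion $|f(re^{i\theta})|\le |f(Re^{i\theta})|+(r-R)\bigl(\partial_r|f|(R,\theta)\bigr)+o(r-R)$, and then Danskin's theorem gives $D^{\pm}h(R)$ as the max/min of $\partial_r|f|$ over the argmax set, both equal to the common value $|f'(z_0)|$ by hypothesis (3); with that substitution your argument is complete.
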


\begin{proof}
    Suppose first $|f'(z)| < 1$ for all $z \in \argmax_{z \in B_R} |f(z)|$.
    Note that $\argmax_{z \in B_R} |f(z)|$ is closed and contained in $\partial \B{R}$. It follows that there exist $\delta_1,\delta_2 > 0$ and $\eta < 1$ such that $|f'(z)| < \eta$ for all $z \in S_{\delta_1,\delta_2}$, where 
    \[
        S_{\delta_1,\delta_2} = \{r \cdot e^{i\theta} \cdot z: 1 \leq r < 1+\delta_1, |\theta| < \delta_2 \text{ and } z \in \argmax_{z \in B_R} |f(z)|\}.
    \]
    The set $\partial B_R \setminus S_{\delta_1, \delta_2}$ is a closed subset of $\partial B_R$ whose forward image under $f$ is contained in $\Int(B_R)$. Therefore, by continuity, there exists an $\epsilon \in (0,\delta_1)$ such that the forward image of $\partial \B{(1+\epsilon)R} \setminus S_{\delta_1, \delta_2}$ is contained in $\Int(B_R)$. Now take an element $z \in \partial \B{(1+\epsilon)R} \cap S_{\delta_1, \delta_2}$ and let $\theta_z = \arg(z) $. By construction the whole straight line segment between $R e^{i \theta_z}$ and $z = (1+\epsilon)R e^{i\theta_z}$ lies in $S_{\delta_1,\delta_2}$. Therefore 
    \[
        |f(z)| \leq |f(R e^{i \theta_z})| + |f(z) - f(R e^{i \theta_z})|  < R + \eta |R e^{i \theta_z} - z| = (1 + \eta \epsilon) R.
    \]
    Here we have used that $f(B_R) \subseteq B_R$. We see that $z$ gets mapped into $\Int(\B{(1+\epsilon)R})$ by $f$. We can thus conclude that $f(\B{(1+\epsilon)R}) \subseteq \Int(\B{(1+\epsilon)R})$, which is a contradiction.

    Now suppose that $|f'(z)| > 1$ for all $z \in \argmax_{z \in \B{R}} |f(z)|$. Let $U = \{z: |f'(z)|>1\}$, which is an open neighborhood of $\argmax_{z \in \B{R}} |f(z)|$. Because $f(\B{R}) \subseteq \B{R}$ we can use the Schwarz--Pick theorem which states that for all $z \in \Int(\B{R})$
    \[
        \frac{|f'(z)|}{R^2-|f(z)|^2} \leq \frac{1}{R^2-|z|^2}.
    \]
    It follows that for all $z \in U \cap \Int(\B{R})$ we have $|f(z)| < |z|$. The image of $\partial \B{R} \setminus U$ under $f$ is mapped into $\Int(B_R)$ and thus, by continuity, there is an $\epsilon > 0$ such that $f(\partial \B{(1-\epsilon)R} \setminus U) \subseteq \Int(\B{(1-\epsilon)R})$. It follows that $f(\B{(1-\epsilon)R}) \subset \Int(\B{(1-\epsilon)R})$, which is a contradiction.
\end{proof}

\subsection{Proof of Theorem~\ref{thm: Zero free linear (k)-uniform hypertrees}}

We will now prove Theorem~\ref{thm: Zero free linear (k)-uniform hypertrees}, which we restate for convenience.

\theoremUniformHypertrees*

\begin{proof}
Define 
\[
    g(z) = \left[1 - \left(\frac{z}{1+z}\right)^b \right]^d
\]
so that $f_{\lambda}(z) = \lambda \cdot g(z)$. Consider the set 
\[
    S = \{\lambda \in \mathbb{C}: f_{\lambda}(B_R) \subseteq B_R \text{ for some $R \geq 0$}\}.
\]
For any $\lambda$ with $|\lambda| \leq \left(\max\left\{|g(z)|: z \in \B{1/2}\right\}\right)^{-1}$ we see that $f_{\lambda}(\B{1/2}) \subseteq \B{1/2}$ and thus $S$ is not empty. Moreover, $S$ is closed and if $\lambda \in S$ then $\alpha \cdot \lambda \in S$ for every $\alpha \in \overline{\mathbb{D}}$. If $f_\lambda(B_R) \subseteq B_R$ then $\lambda = f_{\lambda}(0) \in B_R$ and thus $R \geq |\lambda|$. Because $f_{\lambda}(1) = \infty$ it follows that $|\lambda| \le  R < 1$ for all $\lambda \in S$. We can thus conclude that $S = \B{\rho_{d,b}}$. Thus zero-freeness follows from Lemma~\ref{lem: zero-free disk k-uniform hypertrees}.

For the rest let us denote $\rho=\rho_{d,b}$.
Because $\{R \geq 0 : f_{\rho}(B_R) \subseteq B_R\}\subseteq \mathbb{R}$ is closed we can take $R_M$ to be the largest possible radius for which $f_{\rho}(\B{R_M}) \subseteq \B{R_M}$. Clearly there is no $R' > 0$ such that $f_{\rho}(\B{R'}) \subseteq \Int(\B{R'})$ because that would imply that $f_{(1+\epsilon)\rho}(\B{R'}) \subseteq \B{R'}$ for some $\epsilon >0$ contradicting the maximality of $\rho$. Recall that we defined $\mu(z) = z/(1+z)$. We see that $\mu(\B{R_M})$ is a disk containing $\mu(0) = 0$, whose center is $(\mu(R_M) + \mu(-R_M))/2 = - \frac{R_M^2}{1 - R_M^2}$. This center is negative because $R_M < 1$. We have 
\[
    \argmax_{z \in \B{R_M}} |f_{\rho}(z)| = \argmax_{z \in \B{R_M}} |g(z)| = \argmax_{z \in \B{R_M}} |1-\mu(z)^b| = \mu^{-1} \left(\argmax_{w \in \mu(\B{R_M})} |1-w^b|\right).
\]
So, by Lemma~\ref{lem: Two maxima}, there is either a unique $z \in \B{R_M}$ maximizing $|f_\rho(z)|$ or a conjugate pair. In both cases $|f'_\rho(z)|$ is the same for all $z \in \argmax_{z \in \B{R_M}} |f_{\rho}(z)|$. 

Fix $z \in \argmax_{z \in \B{R_M}} |f_{\rho}(z)|$ then, by Lemma~\ref{lem: max disk -> indifferent}, $|f_{\rho}'(z)| = 1$. By the maximum modulus principle $|z| = |f_{\rho}(z)| = R_M$. Let $\lambda = \rho z/f_{\rho}(z)$, then $f_{\lambda}(z) = z$ and $|f_\lambda'(z)| = 1$. Thus it follows from Lemma~\ref{lem: indiff implies zero} that for any $n\ge 1$ integer there is a $\lambda_n\in\mathbb C$ and a $(b+1)$-uniform linear hypertree $\mathcal{T}_n$ with degree at most $d+1$ such that $Z(\mathcal{T}_n;\lambda_n)=0$ and  $|\lambda_n-\lambda|<1/n$. In particular $\lim_{n\to\infty }|\lambda_n|=|\lambda|=\rho$, which proves the maximality part.

Because $z$ is a fixed point on the boundary of a disk that is forward invariant for $f_{\lambda}$, its derivative must be positive real at $z$. Therefore we can conclude that $f_\lambda'(z) = 1$. From Lemma~\ref{lem: fixed points flambda} it follows that $\lambda$ is of the form $\lambda = (1-w^b)^{-d}\cdot\frac{w}{1-w}$ , where $w \neq 1$ is a solution to 
    \[
        -bd \cdot \frac{w^b(1-w)}{1-w^b} = 1.
    \]
Thus, we can conclude that $|\lambda| = \rho$ is at least as large as the smallest in absolute value of such $\lambda$. If $\rho$ were strictly larger than this minimum, then by Lemma~\ref{lem: indiff implies zero} we would conclude that there is a $(b+1)$-uniform tree $\mathcal{T}$ of degree at most $(d+1)$ and $\lambda_1\in \mathbb{C}$ such that $|\lambda_1|<\rho$ and $Z(\mathcal{T};\lambda_1)=0$. But this contradicts the zero-freeness part. Thus $\rho$ is equal to the minimum, which concludes the proof of the formula part.

All that remains is to prove the asymptotics. Fix $b$ and define $h(w) =\frac{w^b(1-w)}{1-w^b}$. Consider the multivariate function $F(w,\eta) = h(\eta w) / \eta^b - 1$. We have $F(w,0) = w^b-1$, which has zeros $\zeta^k$ for $k= 0, \dots, k-1$ and $\zeta = e^{2\pi i /b}$. By the implicit function theorem there are holomorphic functions $\alpha_{k}$ defined in a neighborhood of $\eta = 0$ with $\alpha_k(\eta) = \zeta^k + \mathcal{O}(|\eta|)$ and $F(\alpha_k(\eta),\eta) = 0$. The functions $\alpha_k$ parameterize the solutions to $F(w,\eta) = 0$ for $\eta$ sufficiently small. 

For each $d \geq 2$ let $w_d$ be a solution to $h(w) = -1/bd$ for which $|(1-w^b)^{-d}\cdot\frac{w}{1-w}| = \rho_{d,b}$. Moreover, let $\eta_d = (-1/bd)^{1/b}$, making an arbitrary choice of $b$-th root. Observe that $F(w_d/\eta_d, \eta_d) = 0$ and thus, for sufficiently large $d$, there is a $k_d$ such that $w_d = \eta_d \alpha_{k_d}(\eta_d)$. It follows that 
\[
    w_d^b = -\frac{1}{bd} + \mathcal{O}(d^{-1 - 1/b})
    \quad
    \text{ and }
    \quad 
    |w_d| = \left(bd\right)^{-1/b} + \mathcal{O}(d^{-2/b}).
\]
We conclude that 
\begin{align*}
    \rho_{d,b} = \left|(1-w_d^b)^{-d}\cdot\frac{w_d}{1-w_d}\right| &= \left|1 + \frac{1}{bd} + \mathcal{O}(d^{-1 - 1/b})\right|^{-d} \cdot \left|w_d + \mathcal{O}(w_d^2)\right| \\
    &= \left(e^{-1/b} + \mathcal{O}(d^{-1/b})\right) \cdot \left(\left(bd\right)^{-1/b} + \mathcal{O}(d^{-2/b})\right) \\
    &= (ebd)^{-1/b} + \mathcal{O}(d^{-2/b}).
\end{align*}
\end{proof}


\bibliographystyle{alphaurl}
\bibliography{main}
\end{document}